\newtheorem{theo}{Theorem}
\newtheorem{pro}{Proposition}[section]
\newtheorem{lem}[pro]{Lemma}
\newtheorem{coro}[pro]{Corollary}
\newtheorem{remark}[pro]{Remark}
\newtheorem{defi}[pro]{Definition}
\newcommand{\ed}[1]{{\color{black}{#1}}}
\newcommand{\edd}[1]{{\color{black}{#1}}}
\def\Xint#1{\mathchoice
   {\XXint\displaystyle\textstyle{#1}}%
   {\XXint\textstyle\scriptstyle{#1}}%
   {\XXint\scriptstyle\scriptscriptstyle{#1}}%
   {\XXint\scriptscriptstyle\scriptscriptstyle{#1}}%
   \!\int}
\def\XXint#1#2#3{{\setbox0=\hbox{$#1{#2#3}{\int}$}
     \vcenter{\hbox{$#2#3$}}\kern-.5\wd0}}
\def\dashint{\Xint-}
\DeclareMathOperator{\supp}{Supp}
\def\({\left(}
\def\){\right)}
\def\1{\mathbf{1}}
\def\admissible{{\mathcal{A}_m}}
\def\ainfty{{\mathcal{A}_1}}
\def\bm{{m_0}}
\def\bm{{m_0}}
\def\D{\displaystyle}
\def\B{{\mathcal{B}}}
\def\curl{{\rm curl\,}}
\def\diam{\mathrm{diam}\ }
\def\dist{\text{dist}\ }
\def\div{\mathrm{div} \ }
\def\dt0{{{\frac{d}{dt}}_{|t=0}}}
\def\D{\displaystyle}
\def\E{{\Sigma}}
\def\ep{\varepsilon}
\def\f{{\mathbf f}}
\def\F{{F_n}}
\def\gF{{\mathbf F}}
\def\bnun{{\nu_n}}
\def\bgn{{g_n}}
\def\bjn{{\j_n}}
\def\hal{\frac{1}{2}}
\def\I{{\mathcal{F}}}
\def\indic{\mathbf{1}}
\def\indic{\mathbf{1}}
\def\K{K_n^\beta}
\def\lep{{|\mathrm{log }\ \ep|}}
\def\loc{{\text{\rm loc}}}
\def\Lp{{L^p_\loc(\mr^2,\mr^2)}}
\def\l|{\left|}
\def\mc{\mathbb{C}}
\def\mn{\mathbb{N}}
\def\mn{\mathbb{N}}
\def\mr{\mathbb{R}}
\def\mo{{\mu_0}}
\def\mz{\mathbb{Z}}
\def\nab{\nabla}
\def\np{\nab^{\perp}}
\def\P{\mathcal{P}}
\def\p{\partial}
\def\Q{{\mathbb{P}_n^\beta}}
\def\radon{\mathcal{M}}
\def\ro{\rho}
\def\r|{\right|}
\def\sm{\setminus}
\def\supp{\text{Supp}}
\def\T{{\mathbb{T}}}
\def\vp{\varphi}
\def\w{{w_n}}
\def\x{{\xi_n}}
\def\z{\zeta}
\def\Z{Z_n^\beta}
\def\dr{{\delta_\mr}}
\def\tP{\tilde{P}}
\def\Lj{{\Gamma(\j)}}
\def\wb{{\overline{m}}}
\def\bw{{\underline{m}}}
\def\xbf{{\mathbf x}}
\def\j{{E}}
\def\wQ{{\tilde{\mathbb{P}}_n^\beta}}
\def\tW{\widetilde{W}}
\numberwithin{equation}{section}
\title{1D Log Gases and the Renormalized Energy : Crystallization at Vanishing Temperature}
\author{Etienne Sandier and Sylvia Serfaty}
\begin{document}
\maketitle

\begin{abstract}
We study the statistical mechanics of a one-dimensional log gas or $\beta$-ensemble with general potential and arbitrary $\beta$, the inverse of temperature, according to the method we introduced for two-dimensional Coulomb gases in \cite{ma2d}.
Such ensembles correspond to random matrix models in some particular cases.
 The formal limit $\beta=\infty$ corresponds to ``weighted Fekete sets" and is also treated.

 We introduce a one-dimensional version of the ``renormalized energy" of \cite{ss}, measuring the total logarithmic interaction of an infinite set of points on the real line in a uniform neutralizing background. We show that this energy is minimized when the points are on a lattice.

 By a suitable splitting of the Hamiltonian we connect the full statistical mechanics problem to this renormalized energy $W$, and this allows us to obtain new results on the distribution of the points at the microscopic scale: in particular we show that configurations whose $W$ is above a certain threshhold (which tends to $\min W$ as $\beta\to \infty$) have exponentially small probability. This shows that the configurations have increasing order  and crystallize as the temperature goes to zero.
 \end{abstract}

\section{Introduction}
In \cite{ma2d} we studied the  statistical mechanics
 of a 2D classical Coulomb gas (or two-dimensional plasma) via the tool of the ``renormalized energy" $W$ introduced in \cite{ss}, a particular case of which is the Ginibre ensemble in random matrix theory.

In this paper we are interested in doing the analogue in one dimension, i.e. first defining a ``renormalized energy" for points on the real line, and applying this tool to the study of the classical log gases or $\beta$-ensembles, i.e. to probability laws of the form
\begin{equation}
\label{loi}
d\Q(x_1, \dots, x_n)=   \frac{1}{\Z} e^{-\frac{\beta}{2} \w (x_1, \dots, x_n)}dx_1\, \dots dx_n\end{equation} where $\Z$ is the associated  partition function, i.e. a normalizing factor such that $\Q$ is a probability, and
\begin{equation}
\label{wn}
\w (x_1, \dots, x_n)= -  \sum_{i \neq j} \log |x_i-x_j| +n  \sum_{i=1}^n V(x_i).\end{equation}
Here   the $x_i$'s belong to  $\mr$, $\beta>0$ is a parameter corresponding to (the inverse of) a temperature, and $V$ is a relatively arbitrary potential, satisfying some growth conditions. For a general presentation, we refer to the textbook \cite{forrester}. Minimizers of $\w$ are also called ``weighted Fekete sets" and arise in interpolation, cf. \cite{safftotik}.

There is an abundant literature on the random matrix aspects of this problem (the connection  was first pointed out by Wigner and  Dyson \cite{wigner,dyson}), which is the main motivation for studying log gases. Indeed, for the quadratic potential $V(x)=x^2/2$, particular cases of $\beta$ correspond to the most famous random matrix ensembles: for
 $\beta=1$  the law $\Q$ is the law of eigenvalues of matrices of  the Gaussian Orthogonal Ensemble (GOE), while  for $\beta=2$ it corresponds to the Gaussian Unitary Ensemble (GUE), for general reference see \cite{forrester,agz,mehta}.  For  $V(x)$ still quadratic, general $\beta$'s have been shown to correspond to tri-diagonal random matrix ensembles, cf. \cite{de,abf}.
 This observation allowed Valk\'o and Vir\'ag \cite{vv}  to derive the sine-$\beta$ processes as the local spacing distributions of these ensembles.
  When $\beta=2$ and $V(x)$ is more general, the model corresponds  (up to minor modification) to other determinantal processes called  orthogonal polynomial ensembles (see e.g. \cite{koe} for a review).

 The study of $\Q$ via the random matrix aspect is generally based on explicit formulas for correlation functions and local statistics, obtained via orthogonal polynomials, as pioneered by Gaudin, Mehta, Dyson, cf. \cite{mehta,deift,dg}.
\ed{We are interested here in the more general setting of general $\beta$ and $V$, with equilibrium measures for the empirical distribution of the eigenvalues whose support can have several connected components, also called the ``multi-cut regime" as opposed to the ``one-cut regime."  One class of recent results in this direction are those of Borot-Guionnet  and Shcherbina  who prove in particular partition functions expansions in  the case of the one-cut regime with general $V$ \cite{bg1,sh1}  or the case of the multi-cut regime with analytic $V$ \cite{bg2,sh2} (see references therein for prior results). Another is those by Bourgade-Erd\"os-Yau  \cite{bey1, bey2} who prove universality (i.e. independence with respect to $V$) of the eigenvalue gap distribution for analytic $V$ (see also the recent result of Bekerman-Figalli-Guionnet \cite{bfg} obtained by a transport method in the one-cut regime with $V\in C^{31}$). }
\smallskip 

The results and the method  here are counterparts of those  obtained in \cite{ma2d} for $x_1, \dots, x_n$ belonging to $\mr^2$, in other words the two-dimensional Coulomb gas (this corresponds for $V$ quadratic and $\beta=2$ to the Ginibre ensemble of non-hermitian Gaussian random  matrices). The study in \cite{ma2d} relied on relating the Hamiltonian $\w$ to a Coulomb ``renormalized energy" $W$ introduced in \cite{ss} in the context of Ginzburg-Landau vortices.
This relied crucially on the fact that the logarithm is the Coulomb kernel in two dimensions, or in other words the fundamental solution to the Laplacian. When looking at the situation in one dimension, i.e. the present situation of the 1D log-gas, the logarithmic kernel  is no longer the Coulomb kernel, and it is not a priori clear that anything similar to the study in two dimensions can work.  Note that the 1D Coulomb gas, corresponding to $\Q$ where the logarithmic interaction is replaced by the 1D Coulomb kernel $|x|$, has been studied, notably by Lenard \cite{len1,len2}, Brascamp-Lieb \cite{bl}, Aizenman-Martin \cite{am}.  The situation there is rendered again more accessible by the Coulomb nature of the interaction and its less singular character. In particular \cite{bl} prove cristallization (i.e. that the points tend to arrange along a regular lattice) in the limit of a small temperature, we will get a similar result for the log-gas.

The starting point of our study is that even though the logarithmic kernel is not Coulombic in dimension 1, we can view the particles on the real line as embedded into the two-dimensional plane and interacting as Coulomb charges there. This provides a way of defining an analogue of the ``renormalized energy" of \cite{ss}  in the one-dimensional setting, still called $W$, which goes ``via" the two-dimensional plane \ed{and is a way of computing the $L^2$ norm of the Stieltjes transform, cf. Remark~\ref{stieljes} below. }

Once this is accomplished, we connect in the same manner as \cite{ma2d} the Hamiltonian $\w$ to the renormalized energy $W$ via a ``splitting formula" (cf. Lemma \ref{lemsplit} below), and we obtain the counterparts results to \cite{ma2d}, valid with our relatively weak assumptions on $V$:
\begin{itemize}
\item
a next-order expansion of the partition function in terms of $n$ and $\beta$, cf. Theorem \ref{valz}. 
\item the proof that the minimum of $W$ is achieved by the one-dimensional regular lattice $\mz$, called the ``clock distribution" in the context of orthogonal polynomial ensembles \cite{simon}. This is in contrast with the dimension 2 where the identification of minimizers of $W$ is still open (but conjectured to be ``Abrikosov" triangular lattices.)
\item  the proof that ground states of $\w$, or  ``weighted Fekete sets", converge to  minimizers of $W$ and hence to \ed{cristalline states}, cf. Theorem \ref{th2}.
\item A large deviations type result which shows that events with high $W$ become less and less likely as $\beta \to \infty$, proving in particular the  crystallization as the temperature tends to $0$.
\end{itemize}
%As far as we know, this is the first  proof of crystallization for one-dimensional log gases.
Our renormalized energy $W$, which serves to prove the crystallization, also  appears (like its two-dimensional version) to be a measurement of ``order" of a configuration at the microscopic scale $1/n$. \ed{This is more precisely quantified in \cite{leble}.} What we show here is that there is more and more order (or rigidity) in the log gas, as the temperature gets small. Of course, as already mentioned, it is known that eigenvalues of random matrices, even of general Wigner matrices, should be regularly spaced, and   \cite{vv,bey1,bey2} showed that this could be extended to general $V$'s.
 Our results approach this question sort of orthogonally, by exhibiting a unique number which  measures the average rigidity.
(Note that in \cite{bs} the second author and Borodin used $W$ as a way of quantifying the order of random point processes, in particular those arising as local limits in random matrix theory.)

\ed{Cristallization was already known in some particular or related settings. One is  the case where $V$ is quadratic,  for which the $\beta\to \infty$ limits of the  eigenvalues -- in other words the weighted  Fekete points --  are also zeroes of Hermite  polynomials, which are known to have  the clock distribution (see e.g.  \cite{als}). The second is the case of the  $\beta$-Jacobi ensemble \cite{vv}. }

Our study here differs technically  from the two-dimensional one in two ways: the first one is in the definition of $W$ by embedding the problem into the plane, as already mentioned. The second one is more subtle: in both settings a crucial ingredient in the analysis is to reduce the evaluation of the interactions to an extensive quantity (instead of sums of pairwise Coulomb interactions); that quantity is essentially the $L^2$ norm of the electric field generated by the Coulomb charges, or equivalently of the Stieljes transform of the point distribution. Test-configurations can be built and their energy evaluated by ``copying and pasting", provided a cut-off procedure is devised: it consists essentially in taking a given electric  field and making it vanish on the boundary of a given  box while not changing its energy too much. In physical terms, this corresponds to {\it screening} the field. The point is that screening is much easier in two dimensions than in one  dimension, because in two dimensions there is more geometric flexibility to
  move charges around.   We found that in fact, in dimension 1, not all configurations with finite energy can be effectively screened. However, we also found that generic ``good" configurations can be, and this suffices for our purposes. The screening construction, which is different from the two-dimensional one, is one of the main difficulties here, and forms a large part of the \smallskip paper.

The rest of the introduction is organized as follows: \edd{We begin by introducing the equilibrium measure (i.e. the minimizer of the mean-field limiting Hamiltonian) and known facts concerning it, in the next two sections we describe the central objects in our analysis, i.e. the marked electric field process and the renormalized energy $W$. Then we state the results which connect $\w$ to $W$: the ``splitting formula", and the Gamma-convergence lower and upper bounds. Finally, in Section \ref{sec1.3} we state our main results about Fekete points and the 1D Coulomb gas.}

\subsection{The spectral and equilibrium measures and our assumptions}\label{sec1.2}
The Hamiltonian \eqref{wn} is written in the mean-field scaling. The limiting ``mean-field" limiting energy (also called Voiculescu's noncommutative entropy in the context of  random matrices, cf. e.g. \cite{agz} and references therein) is 
 \begin{equation}\label{Ib}
\I (\mu)= \int_{\mr\times \mr} - \log |x-y|\, d\mu(x) \, d\mu(y) + \int_{\mr} V(x)\, d\mu(x), \end{equation}
it is well known  (cf. \cite{safftotik}) that it has a unique minimizer, called the (Frostman) equilibrium measure, which we will denote $\mo$. 
 It is not hard to prove that the ``spectral measure" (so-called in the context of random matrices) $\nu_n = \frac{1}{n} \sum_{i=1}^n \delta_{x_i}$ converges to  $\mo$. The sense of convergence usually proven is
  \begin{equation}\mathbb{P}\( \forall f \in C_b( \mc, \mr) , \int f\, d\nu_n \to\int   f\, d\mu_0 \) =1\end{equation}
For example, for the case of the GUE i.e. when $V(x)=|x|^2$ and $\beta=1$, the correspond distribution $\mo$ is simply Wigner's ``semi-circle law" $\ro(x)=\frac{1}{2\pi}\sqrt{4-x^2}\indic_{|x|<2}$, cf. \cite{wigner,mehta}.
 A stronger result was proven in \cite{bg} for all $\beta$ (cf. \cite{agz} for the case of general $V$): it estimates the large deviations from this convergence and shows that $\I$ is the appropriate rate function. The result can  be written:
 \begin{theo}[Ben Arous - Guionnet \cite{bg}]\label{thbg}
 Let $\beta>0$, and denote by $\wQ$ the image of the law \eqref{loi} by the map $(x_1,\dots,x_n)\mapsto \nu_n$, where $\nu_n=\frac{1}{n}\sum_{i=1}^n \delta_{x_i}$. Then for any subset  $A$ of the set of  probability measures on $\mr$ (endowed with the topology of weak convergence), we have
$$
-\inf_{\mu \in \mathring{A}} \widetilde \I(\mu) \leq \liminf_{n\to \infty} \frac{1}{n^2} \log \wQ(A) \leq \limsup_{n\to\infty} \frac{1}{n^2} \log \wQ(A) \leq -\inf_{\mu\in \bar A} \widetilde \I(\mu),
$$
where $\widetilde \I =\frac{\beta}{2}( \I - \min \I)$.
\end{theo}
The Central Limit Theorem for (macroscopic) fluctuations from the law $\mo$ was proved by Johansson \cite{joha}.
\smallskip 

Let us now state a few facts that we will need about the equilibrium measure  $\mo$, for which we refer to \cite{safftotik}: $\mo$  is   characterized by the fact that
  there exists a constant $c$ (depending on $V$) such that
\begin{equation}\label{optcondmo}
U^\mo + \frac{V}{2}= c\  \text{quasi-everywhere in the support of $\mo$, and} \  U^\mo + \frac{V}{2}\ge c \text{ quasi-everywhere}\end{equation}
 where for any $\mu$, $U^\mu$ is the potential generated by $\mu$, defined by
\begin{equation}
U^\mu(x)= - \int_{\mr} \log |x-y|\, d\mu(y) .\end{equation}
 We also define \begin{equation}\label{defz}
 \z= U^\mo+\frac{V}{2}-c \end{equation} where $c$ is the constant in \eqref{optcondmo}. From the above we know that $\z\ge 0$ in $\mr$ and $\z= 0$ in $\E:=\supp(\mo)$. We will make the assumption that $\mu_0$ has a density $m_0$ with respect to the Lebesgue measure, as well as the following additional assumptions:

%Another way to characterize  $U^\mo$ is as the solution of the following {\it obstacle problem}\footnote{The obstacle problem is a free-boundary problem and a much-studied   classical  problem in the calculus of variations,  for general reference see \cite{friedman,ks}.} : It is a superharmonic function bounded below by $c-V/2$ and harmonic outside the so-called {\em coincidence set}
%\begin{equation}\label{coincidence}\E=\{U^\mo = c-V/2\}.\end{equation}
%This implies in particular that $U^\mo$ is  $C^{1,1}$ if $V$ is \smallskip (see \cite{caff}).
\begin{equation}\label{assumpV1}     % V(x)\ge C \log^2|x|\text{ for some }\ C>0\ \text{and for }\ |x|\ge R,
 \text{$V$ is lower semicontinuous and $\lim_{|x|\to +\infty} \frac{V(x)}{2} - \log|x| = +\infty.$}
\end{equation}
\begin{equation}\label{assumpV2} \E \ \text{is a finite union of closed intervals } \E_1, \dots, \E_M \ \textrm{(multi-cut)}.    \end{equation}
 \begin{equation}\label{assumpV3}
 \text{There exist $\gamma,\wb>0$ such that $\gamma \sqrt{\dist(x,\mr\sm \E)}  \le m_0(x) \le \wb$ for all $x\in\mr$.}\end{equation}
 \begin{equation} \label{assumpV4} m_0\in C^{0,\hal}(\mr).\end{equation}
 \begin{equation}\label{assumpVsupp}\text{There exists} \
 \beta_1>0 \ \text{such that} \ \int_{\mr\backslash [-1,1]} e^{- \beta_1(V/2(x) -\log|x|)} \, dx <+\infty. \end{equation}
  The assumption \eqref{assumpV1} ensures (see \cite{safftotik}) that \eqref{Ib} has
   a minimizer, and that its support $\E$ is compact. Assumptions
   \eqref{assumpV2}--\eqref{assumpV4}  are needed for the construction
   in Section~\ref{sec4}. They could certainly be relaxed  but are  meant to include
   at least the model case of $\mo=\ro$, Wigner's semi-circle law.
   Assumption \eqref{assumpVsupp} is  a supplementary assumption on the growth of $V$ at infinity,
   needed for the case with temperature. It only requires a very mild growth of
   $V/2- \log |x|$, i.e. slightly more than \eqref{assumpV1}.

   %They also ensure that \cm{la il faut trouver un substitut, ce qu'on  a est seulement $\Delta^{1/2} \zeta= \mu+ \hal \Delta^{1/2} V$
% \begin{equation}\label{lemz}
%\z(x) \ge c\, \dist(x, \E)^2.\end{equation} mais je crois que ce n'est pas absolument necessaire}

\edd{\subsection{The marked electric field process}
Theorem~\ref{thbg} describes the asymptotics of $\Q$ as $n\to +\infty$ in terms of the spectral measure $\nu_n = \frac{1}{n} \sum_{i=1}^n \delta_{x_i}$. Our results will  rather use an object which  retains information at the microscopic scale : the marked electric field process. 

More precisely, given any configuration $\xbf= (x_1, \dots, x_n)$, we let $\nu_n= \sum_{i=1}^n \delta_{x_i}$  and  $\nu_n' = \sum_{i=1}^n\delta_{x_i'}$  where the primes denote blown-up quantities ($x'=nx$).  
We set $\bm'(nx) = \bm(x)$, and we denote by $\dr$ denotes the measure of length on  $\mr$ seen as embedded in $\mr^2$, that is
$$\int_{\mr^2} \varphi  \dr  = \int_\mr \varphi(x, 0)\, dx$$
for any smooth compactly supported test function $\vp $ in $\mr^2$.
The  configuration $\xbf$ generates (at the blown-up scale) an electric field  via
\begin{equation}\label{Hp} \j_{\nu_n} :=  - \nab H_n',\quad\text{where}\quad \Delta H_n' = - 2\pi  \(\nu_n'-  \bm'\dr \).\end{equation} 
where  $H_n'$ is understood to be the only solution  of the equation which decays at infinity, which is obtained by convoling the right-hand side with $-\log |x|$. We will sometimes write it as $
 H_n' = - 2\pi \Delta^{-1} \(\nu_n' - \bm' \dr \) = -\log *\(\nu_n' - \bm' \dr\)  $. Here 
We note that from \eqref{Hp}, $\j_{\nu_n}$ satisfies the relation 
\begin{equation}\label{eqsjnu}
\div E_{\nu_n}= 2\pi \(\nu_n'-\bm'\dr\) \quad \text{in} \ \mr^2,\end{equation}
supplemented with the fact that $E_{\nu_n}$ is a gradient.
\ed{

\begin{remark} \label{stieljes}
When considering the Stieljes transform of a (say compactly supported) measure $\mu$ on $\mr$, 
$$S(z)=\int \frac{d\mu(x)}{z-x}, \quad z \in \mathbb{C}$$
one observes that 
$$|S(z)|= |\nabla \log * \mu |.$$
Thus the electric field $E=-\nab \log *\mu$ of the type we introduced is very similar to the Stieltjes transform, in particular they have the same norm. We note however that it seems much easier to take limits in the sense of distributions -- what we will need  to do --   in  \eqref{eqsjnu} than in Stieltjes transforms. 
\end{remark}

}

The field $\j_{\nu_n}$ belongs to $ L^p_\loc(\mr^2,\mr^2) $
 for any  $p\in [1,2)$. Choosing once and for all such a $p$, we define $X:= \Sigma \times L^p_\loc(\mr^2,\mr^2) $ the space of ``marked" electric fields, where the mark $x\in \Sigma$ corresponds to the point where we will center the blow-up.   We denote by 
 $\P(X)$  the space of probability measures on $X$ endowed with the Borel $\sigma$-algebra, where the topology is the usual one on $\mr$ and  the topology of weak convergenceon $L^p_\loc$.
 
We  may now naturally associate  to  each configuration $\xbf= (x_1, \dots, x_n)$ a ``marked electric field distribution" $P_{\nu_n}$ via the map 
\begin{eqnarray}
\label{in}
i_n : &  \mr^n \longrightarrow \P(X) 
\\  \label{pnun} & \xbf \mapsto P_{\nu_n} :=  \dashint_{\E} \delta_{(x,\j_{\nu_n}(nx+\cdot))}\,dx, 
\end{eqnarray}
i.e. $P_{\nu_n} $ is  the push-forward of the normalized Lebesgue measure on $\E$ by $x \mapsto (x, \j_{\nu_n} ({n}x+\cdot)).$
Another way of saying is that each $P_{\nu_n} (x, \cdot)$ is equal to a Dirac at the electric field generated by $\xbf$, after centering at the point $x$. We stress that $P_{\nu_n}$  has nothing  to do with $\Q$, and  is strictly an encoding of a particular configuration $(x_1,\dots,x_n)$. 

The nice feature is that, assuming  a suitable bound on  $\w(x_1, \dots, x_n)$,  the sequence $\{P_{\nu_n}\}_n$ will be proven to be tight as $n\to \infty$, and thus to converge to an element $P$ of  $\P (X)$. From the point of view of  analysis, $P$ may be seen as a family $\{P^x\}_{x\in\E}$ --- the disintegration of $P$ --- each $P^x$ being a probability density describing the possible blow-up limits of the electric field when the blow-up center is near $x$. It is similar to the  Young measure on micropatterns of \cite{albmu}.

When $(x_1, \dots, x_n)$ is random then $P$ also is and, from a probabilistic point of view, $P$  is  an electric field process, or to be more precise an electric field distribution process.

The limiting $P$ will be concentrated on vector fields which are obtained by taking limits in \eqref{eqsjnu} (after centering at $x$), which will be elements  of the following classes:
\begin{defi}\label{defA}Let $m$ be a positive number.
A vector field  $\j$ in $\mr^2$ is  said to 
belong to the admissible class
$\mathcal{A}_m $
 if it is a gradient and 
  \begin{equation}\label{eqj}
\div \j= 2\pi (\nu -m\delta_\mr ) \quad  \text{in} \ \mr^2\end{equation}
where $\nu$ has the form
\begin{equation}\label{eqnu}
\nu=  \sum_{p \in\Lambda} \delta_{p}\quad \text{ for
some discrete set} \  \Lambda\subset\mr \subset \mr^2,\end{equation}  %with $m(x)=m$
and
\begin{equation}\label{densbornee}
\frac{    \nu ([-R,R] )  } {R}\quad \text{ is bounded by a constant independent of $R>1$}.
\end{equation}\end{defi}
One should understand the  class $\mathcal{A}_m$ as corresponding to infinite configurations on the real line with density of points $m$. The distribution of points on the real line, seen as positive Dirac charges,  is compensated by a background charge $m\dr$ which is also concentrated on the real line.

The properties satisfied by $P= \lim_{n\to \infty} P_{\nu_n}$ may now be summarized in the following definition:
\begin{defi}[admissible probabilities]\label{admissible}
We say $P\in \P (X)$ is \em{admissible} if 
\begin{itemize}
\item The first marginal of $P$ is the normalized Lebesgue measure on $\E$.
\item It holds for $P$-a.e. $(x, E)$ that $\j \in \mathcal{A}_{m_0(x)}$.
\item $P$ is $T_{\lambda(x)}$-invariant.
\end{itemize} 
\end{defi}
Here $T_{\lambda(x)}$-invariant is a strengthening of translation-invariance, related to the marking:
\begin{defi}[ $T_{\lambda(x)}$-invariance]\label{invariance} We say a probability measure $P$ on $\E \times L^p_\loc(\mr^2,\mr^2)$ is $T_{\lambda(x)}$-invariant if $P$ is invariant  by $(x,\j)\mapsto\(x,\j(\lambda(x)+\cdot)\)$, for any $\lambda(x)$ of class $C^1$  from $\E$ to $\mr$.
\end{defi}

Note that from such an admissible electric field process $P$, and since  $\j \in \mathcal{A}_{m_0(x)}$ implies that $\j$ solves  \eqref{eqj}, one can immediately get a (marked)  point process by taking the push-forward of  $P(x, \j) $ by $\j\mapsto \frac{1}{2\pi} \div \j + m_0(x) \delta_\mr$. This process remembers only the point locations, not the electric field they generate, but we will show (Lemma~\ref{depoint}) that the two are equivalent.
}

\edd{\subsection{The renormalized energy}

In Theorem~\ref{thbg}, large deviations  (at speed $n^{2}$)  from the equilibrium measure $\mu_0$ of the spectral measure $\nu_n$ were described with the rate function based on  the  energy $\I(\mu)$. Our statements concern the next order behavior, and if we try to put them in parallel to Theorem~\ref{thbg},  the electric field distribution replaces the spectral measure as the central object, while  the renormalized energy $W$ that we describe in this section replaces $\I$. 

First we define the renormalized energy of an electric field $\j$. It is adapted from \cite{ss} which considered distribution of charges in the plane, by simply ``embedding" the real line into the plane. 
As above we denote points in $\mr$ by the letter $x$ and points in the plane by $z=(x,y)$.}

\begin{defi}\label{def1}Let $m$ be a nonnegative number. For  any bounded function $\chi$ and any
 $\j$ satisfying a relation of the form \eqref{eqj}--\eqref{eqnu}, 
%vector-field $\j \in \mr^2$ such that
%\begin{equation}\label{eqj2}
%\div \j= 2\pi (\nu -m\delta_\mr ),\quad   \text{in} \ \mr^2\end{equation}
%where $\nu$ has the form \eqref{eqnu},
we let
\begin{equation}\label{WR}W(\j, \chi) = \lim_{\eta\to 0} \(
\hal\int_{\mr^2 \backslash \cup_{p\in\Lambda} B(p,\eta) }\chi
|\j|^2 +  \pi \log \eta \sum_{p\in\Lambda} \chi (p) \)
\end{equation}
and the renormalized energy $W$ is defined by
\begin{equation} \label{Wroi} W(\j)= \limsup_{R \to \infty}
\frac{W(\j, \chi_{R})}{R} ,
\end{equation} where $\{\chi_R\}_{R>0}$ is a family of cut-off functions satisfying 
\begin{equation}
\label{defchi} |\nab \chi_{R}|\le C, \quad \supp(\chi_{R})
\subset [-R/2,R/2] \times \mr, \quad \chi_{R}(z)=1 \ \text{if } |x|<R/2-1 ,\end{equation}
for some $C$ independent of $R$. \end{defi}

\ed{
After this work was completed, a slightly different definition of renormalized energy was proposed in \cite{rs} for points in dimensions $d\ge 2$. A version for dimension one can also be written down, cf. \cite{ps} and this allows to retrieve our results with a  few simplifications in the  proof, more precisely it  suppresses the need for Proposition~\ref{wspread}.}

\begin{remark}
While $W$ in 2D can be viewed as a ``renormalized" way of computing $\|H\|_{H^1(\mr^2)}$, in 1D it amounts rather to a renormalized computation of $\| H\|_{H^{1/2}(\mr)}$ (where $H^s$ denote the fractional Sobolev spaces).  In other words, because the logarithmic kernel is not Coulombic in one-dimension, the associated energy is non-local (and the associated operator is the fractional Laplacian $\Delta^{1/2}$). Augmenting the dimension by 1 allows to make it local and Coulombic again. This well-known harmonic extension idea  seems to be attributed to \cite{so}.\end{remark}

As in the two dimensional case, we have the following properties:
\begin{itemize}
\item[-] The value  of $W$ does not depend on $\{\chi_{R}\}_R$
 as long as  it satisfies \eqref{defchi}.
 \item[-] $W$ is insensitive to compact perturbations of the configuration.

\item[-]
Scaling: it is easy to check that if $\j$ belongs to $\mathcal{A}_m$ then
$\j':=\frac{1}{m} \j(\cdot / m)$ belongs to $\mathcal{A}_1 $ and
\begin{equation}\label{minalai1}
W(\j)= m \(W(\j') - \pi \log m\),\end{equation}
so one may reduce to studying $W$ on $\mathcal{A}_1$.
\item[-] If $\j\in\mathcal A_m$ then in the neighborhood of $p\in\Lambda$ we have $\div \j  = 2\pi(\delta_p - m\dr )$, $\curl \j = 0$, thus we have near $p$ the decomposition $\j(x) = - \nab   \log|x-p| + f(x)$ where $f$ is smooth, and it easily follows that the limit \eqref{WR} exists.  It also follows that $\j$ belongs to $L^p_\loc$ for any $p<2$, as stated above.
   % \item[-] $W$ can be defined as a function of the points only via
   % \begin{equation}\label{newdefw}
   % \mathbb{W} (\nu)= \inf_{\j\in \mathcal{A}_m  \ \text{satisfying } \
%\eqref{eqj} } W(\j)\end{equation} and $+\infty$ if $\nu$ is not of the form $\sum_{p\in \Lambda} \delta_p$ with $\Lambda$ a discrete set.
%This makes $\mathbb{W}$ a measurable function of the bounded Radon measures $\nu$ (see \cite{ma2d}).
\end{itemize}

In the case where \eqref{eqnu} is  satisfied, then there exists at most on $\j$ satisfying \eqref{eqj} and such that $W(\j) <+\infty$. This is the content of the next lemma, and is in contrast with the $2$-dimensional case ---  when the support of $\nu$ is  not constrained to lie on the real line and where the definition of $W$ is modified accordingly --- where \eqref{eqj} and $W(\j)<+\infty$ only determine $\j$ up to constant (see Lemma 3.3 in \cite{ma2d}). \edd{The following lemma is proved in the appendix.}

\begin{lem}\label{depoint}
Let $\j\in \mathcal{A}_m$ be such that $W(\j)<+\infty$. Then any other $\j'$ satisfying \eqref{eqj}--\eqref{eqnu} with the same $\nu$ and $W(\j')<+\infty$, is such that $\j'=\j$. In other words, $W$ only depends on the points.
\end{lem}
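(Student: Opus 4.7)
The plan is to show that the difference $w:=\j-\j'$ vanishes identically on $\mr^2$. Since $\j$ and $\j'$ have the same distributional divergence $2\pi(\nu-m\dr)$ and both are curl-free, $w$ satisfies $\div w = 0$ and $\curl w = 0$ in $\mathcal D'(\mr^2)$. Combined with $w\in L^q_\loc$ for $q<2$ (from the bullet preceding the lemma), elliptic regularity then yields $w\in C^\infty(\mr^2)$; in particular the logarithmic singularities of $\j$ and $\j'$ at each $p\in\Lambda$ cancel, and since each component of $w$ is harmonic, $|w|^2$ is smooth and subharmonic on all of $\mr^2$.

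Next, I extract a linear $L^2$ bound on vertical strips. Using $|w|^2\le 2(|\j|^2+|\j'|^2)$, the definition of $W(\j,\chi_R)$ and $W(\j',\chi_R)$, and the density bound \eqref{densbornee} (which, via $\pi|\log\eta|\sum_p\chi_R(p) = O(|\log\eta|R)$, absorbs the regularization term into an $\eta$-dependent constant), I obtain, for every fixed $\eta\in(0,1)$ and all large $L$,
\[
\int_{\bar I_L\setminus\bigcup_{p\in\Lambda} B(p,\eta)} |w|^2 \;\le\; C(\eta)\,L.
\]
The technical heart of the proof is to promote this into a bound on the full strip. I exploit the subharmonicity of $|w|^2$: a weighted annular sub-mean-value inequality (inner radius $\eta$, outer radius $3\eta$) gives, for each $p\in\Lambda$,
\[
\int_{B(p,\eta)} |w|^2 \;\le\; \int_{B(p,3\eta)\setminus B(p,\eta)} |w|^2.
\]
Summing over $p\in\Lambda\cap I_L$ and controlling the overlap multiplicity of the enlarged balls via \eqref{densbornee} (together with a short iteration of the estimate at a slightly larger scale to close the inequality) yields $\int_{\bar I_L}|w|^2 \le C'(\eta)\,L$.

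Finally, the sub-mean-value property of $|w|^2$ on a disk $B(z_0,r)$ centered at an arbitrary $z_0=(x_0,y_0)\in\mr^2$, combined with the inclusion $B(z_0,r)\subset\bar I_{2(|x_0|+r)}$, delivers
\[
|w(z_0)|^2 \;\le\; \frac{1}{\pi r^2}\int_{B(z_0,r)} |w|^2 \;\le\; \frac{2C'(\eta)\,(|x_0|+r)}{\pi r^2} \;\longrightarrow\; 0 \quad\text{as } r\to\infty,
\]
so $w(z_0)=0$ for every $z_0$, i.e.\ $\j=\j'$. The main obstacle is the middle step: \eqref{densbornee} controls only the number of points per unit length, not their minimum separation, so the balls $B(p,\eta)$ may overlap for arbitrarily close pairs $p,q\in\Lambda$. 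The annular sub-mean-value above is precisely designed to sidestep this issue, since it does not require the annulus around $p$ to be disjoint from the log singularities of $\j$ and $\j'$ coming from other points.
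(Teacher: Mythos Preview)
Your strategy is close in spirit to the paper's: both show that $w = \j - \j'$ is entire (each component harmonic, or equivalently $w$ holomorphic after the usual identification $\mr^2\cong\mc$) and then run a Liouville-type argument via a sub-quadratic growth bound. The difference is in \emph{which} growth bound is used and how it is obtained, and this is where your argument has a genuine gap.

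In Step~2 you claim that, for fixed $\eta\in(0,1)$,
\[
\int_{\bar I_L\setminus\bigcup_p B(p,\eta)} |\j|^2 \;\le\; C(\eta)\,L,
\]
arguing that the regularization term $\pi\log\eta\sum_p\chi_R(p)$ is $O(|\log\eta|R)$ and can be absorbed. But \eqref{WR} is an $\eta\to 0$ limit: one only knows that
$\tfrac12\int_{\mr^2\setminus\cup B(p,\eta)}\bar\chi_R|\j|^2 + \pi\log\eta\sum_p\chi_R(p)$
converges to $W(\j,\chi_R)$ for each fixed $R$. Extracting a bound on the integral at fixed $\eta$, uniformly in $R$, requires that this convergence be uniform in $R$, and that is not clear. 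Indeed, if two points $p,q\in\Lambda$ are at distance $\delta\ll\eta$, the ``smooth part'' $f_p$ in the local decomposition $\j = -\nabla\log|\cdot-p| + f_p$ has size $\sim 1/\delta$ near $p$, and these contributions are not controlled by \eqref{densbornee}. The inequality you want is true, but proving it seems to require something like the mass-displacement machinery of Proposition~\ref{wspread} together with Lemma~\ref{wtog}, not merely the definition of $W$. The same clustering issue prevents your Step~3 iteration from closing cleanly: \eqref{densbornee} does not bound the overlap multiplicity of the annuli $B(p,3\eta)\setminus B(p,\eta)$ for small $\eta$.

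The paper avoids all of this by working in $L^1$ rather than $L^2$. Since the log singularities of $\j$ are locally integrable, no balls need to be excised; one quotes directly from \cite{STi} the growth bound $\|\j\|_{L^1(B_R)}\le CR^{3/2}\sqrt{\log R}$ (and likewise for $\j'$). Writing $w$ as an entire holomorphic function $\sum_n a_n z^n$ and applying an averaged Cauchy formula on annuli $R\le|z|\le R+1$ then gives $|a_n|\le CR^{3/2}\sqrt{\log R}\,R^{-n-1}\to 0$ for $n\ge 1$, so $w$ is constant; the constant vanishes because both $\j$ and $\j'$ are square-integrable on half-strips $[a,b]\times[1,\infty)$.
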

By simple considerations similar to \cite[Section 1.2]{ma2d}  this makes $W$ a measurable function of the bounded Radon measure $\nu$.

 %\begin{equation}\label{minalai}
%\min_{\mathcal{A}_m}  W = m \( \min_{\ainfty} W -\frac14\log m \).\end{equation}

The following lemma is proven in \cite{bs}, \edd{see also \cite{saffcircle}}, and shows that there is an explicit formula for $W$ in terms of the points when the configuration is assumed to have some periodicity. Here we can reduce to $m=1$ by scaling, as seen above.
\begin{lem}\label{casper}
In the case  $m=1$ and when  the set of points $\Lambda$ is  periodic with respect to some lattice $ N\mz$, then  it can be viewed as a set of $N$ points $a_1, \dots , a_N$ over the torus
$\T_N := \mr/(N\mz)$.  In this case,  by Lemma \ref{depoint}  there exists a unique  $\j$ satisfying \eqref{eqj} and for which $W(\j)<+\infty$. It is periodic and  equal to $\j_{\{a_i\}}= \nab H$, where $H$ is the  solution  on $\T_N$ to   $-\Delta H = 2\pi (\sum_i\delta_{a_i} - \dr)$, and we have  the explicit formula:
\begin{equation} \label{WNlog}
W(\j_{\{a_i\}})=
 -\frac{\pi}{N} \sum_{i \neq j} \log \left|2\sin \frac{\pi(a_i - a_j)}{N} \right|- \pi \log \frac{2\pi}{N}.
 \end{equation}
\end{lem}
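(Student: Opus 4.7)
The plan is to first construct the candidate field $\j=-\nab H$ by explicitly solving the PDE on the cylinder $\T_N\times\mr$, then to compute $W(\j)$ by an integration by parts over a single period, and finally to evaluate the resulting boundary terms via a closed form for $H|_{y=0}$.

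\textbf{Step 1: existence of $H$.} A Fourier expansion in the $x$ variable reduces $-\Delta H = 2\pi(\sum_i\delta_{a_i}-\dr)$ on $\T_N\times\mr$ to an ODE for each mode $c_k(y)$. The zero mode has vanishing source because the total charge in one period of the cylinder is zero, and we take $c_0\equiv 0$; each nonzero mode is solved explicitly by a decaying exponential proportional to $e^{-2\pi|k||y|/N}$. Summing the resulting series and using the classical identity $\sum_{k\ge1} k^{-1}r^k\cos(2\pi ku)=-\frac12\log(1-2r\cos 2\pi u+r^2)$ with $r=e^{-2\pi|y|/N}$, one obtains on the real line
\[
H(x,0)=-\sum_{j=1}^N \log\left|2\sin\frac{\pi(x-a_j)}{N}\right|,
\]
with off-axis values decaying exponentially as $|y|\to\infty$. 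The field $\j:=-\nab H$ lies in $\mathcal{A}_1$, has the standard $-\log|z-a_i|$ singularity at each $a_i$, and its Fourier-side decay implies $W(\j)<+\infty$. Uniqueness among admissible fields with finite $W$ then follows from Lemma \ref{depoint}.

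\textbf{Step 2: integration by parts.} By periodicity, the limit \eqref{Wroi} defining $W(\j)$ equals
\[
W(\j)=\frac1N\lim_{\eta\to 0}\left[\frac12\int_{D_\eta}|\nab H|^2+\pi N\log\eta\right],\quad D_\eta:=(\T_N\times\mr)\setminus\bigcup_i B(a_i,\eta).
\]
On $D_\eta$ one has $\Delta H=2\pi\dr$ in the distributional sense, and the exponential decay of $H$ and $\nab H$ as $|y|\to\infty$ kills the outer boundary contribution, so integration by parts gives
\[
\int_{D_\eta}|\nab H|^2 = -2\pi\int_{\T_N\setminus\cup_i(a_i-\eta,a_i+\eta)} H(x,0)\,dx + \sum_i\int_{\partial B(a_i,\eta)} H\,\partial_\nu H,
\]
with $\nu$ pointing into $B(a_i,\eta)$. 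The local expansion $H(z)=-\log|z-a_i|+R_i(a_i)+O(|z-a_i|)$, where $R_i(a_i):=\lim_{z\to a_i}(H(z)+\log|z-a_i|)$, makes each inner boundary integral equal to $-2\pi\log\eta+2\pi R_i(a_i)+o(1)$, and the counterterm $\pi N\log\eta$ exactly absorbs the divergence.

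\textbf{Step 3: evaluation.} Reading off the closed form for $H(x,0)$ in Step 1 and using $2\sin(\pi u)\sim 2\pi u$ as $u\to 0$ yields
\[
R_i(a_i)=\log\frac{N}{2\pi}-\sum_{j\ne i}\log\left|2\sin\frac{\pi(a_i-a_j)}{N}\right|,
\]
while the classical Jensen-type identity $\int_0^1\log|2\sin\pi u|\,du=0$ gives $\int_{\T_N} H(x,0)\,dx=0$. Substituting into the expression for $W(\j)$ obtained in Step 2 produces the stated formula \eqref{WNlog}. The one point that requires care is the vanishing at $|y|\to\infty$ of the outer boundary term in the integration by parts; this follows from uniform $L^2$ estimates on $\nab H$ deduced from the Fourier decomposition, and is the main technical input of the argument.
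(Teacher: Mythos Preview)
Your argument is correct. The paper itself does not prove this lemma here but cites \cite{bs}; your approach---building the periodic Green's function on the cylinder $\T_N\times\mr$ by Fourier series, integrating by parts over one period with careful handling of the $-\log|z-a_i|$ singularities, and evaluating via $\int_0^1\log|2\sin\pi u|\,du=0$---is precisely the natural computation and is essentially what is carried out in \cite{bs} (itself a one-dimensional analogue of the periodic renormalized-energy computation for Ginzburg--Landau vortices in \cite{ss}). Two small remarks: (i) the lemma as stated writes $\j_{\{a_i\}}=\nab H$, but for $\div\j=2\pi(\nu-\dr)$ one needs $\j=-\nab H$ as you correctly take (consistent with the paper's convention elsewhere); this is immaterial for $W$ since only $|\j|^2$ enters. (ii) The reduction of the $\limsup_{R\to\infty}$ with cutoffs $\chi_R$ to the single-period integral deserves one line: the exponential decay of $H$ and $\nab H$ in $|y|$ makes the contribution of the transition region of $\chi_R$ bounded independently of $R$, so the averaged limit coincides with the period average.
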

As in the two-dimensional case, we can prove that $\min_{\mathcal{A}_m}$ is achieved, but contrarily to the two-dimensional case, the value of the minimum can be explicitly computed: we will prove  the following 
\begin{theo}\label{thmini}
 $\min_{\admissible} W=  - \pi m \log (2\pi  m) $
and this minimum is achieved by the perfect lattice i.e. $\Lambda= \frac{1}{m} \mz$.
\end{theo}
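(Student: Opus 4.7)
The scaling identity \eqref{minalai1}, $W(\j) = m\bigl(W(\j') - \pi \log m\bigr)$ with $\j' \in \mathcal{A}_1$, reduces the claim to the case $m=1$: showing $\min_{\mathcal{A}_1} W = -\pi \log(2\pi)$ attained by $\mathbb{Z}$ yields $\min_{\admissible} W = -\pi m \log(2\pi m)$ attained by $\frac{1}{m}\mathbb{Z}$. The value at $\mathbb{Z}$ is immediate from Lemma \ref{casper}: view $\mathbb{Z}$ as a single point on $\mathbb{T}_1$, so the double sum in \eqref{WNlog} is empty and $W(\j_{\mathbb{Z}}) = -\pi \log(2\pi)$. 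It remains to establish the matching lower bound on all of $\mathcal{A}_1$.

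I would first handle $N$-periodic configurations $a_1,\dots,a_N \in \mathbb{T}_N$. Setting $z_j = e^{2\pi i a_j / N}$, the formula \eqref{WNlog} becomes
\begin{equation*}
W(\j_{\{a_i\}}) = -\frac{\pi}{N}\log \prod_{i\neq j}|z_i - z_j| - \pi \log \frac{2\pi}{N}.
\end{equation*}
The elementary identity $\prod_{k=1}^{N-1}|1-e^{2\pi i k/N}| = N$, obtained by evaluating $(z^N-1)/(z-1)$ at $z=1$, gives $\prod_{i\neq j}|z_i-z_j| = N^N$ for the $N$-th roots of unity, and the classical Fekete bound for the unit circle states that $N^N$ is in fact the maximum of this quantity over $N$ points on the circle, with equality iff they are the roots of unity. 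Consequently $W(\j_{\{a_i\}}) \geq -\pi \log N - \pi \log(2\pi/N) = -\pi \log(2\pi)$, with equality precisely for the regular lattice.

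For an arbitrary $\j \in \mathcal{A}_1$ with $W(\j) < \infty$, I would approximate by periodic fields. On an interval $I_R$ for which $W(\j, \chi_R)/R$ is close to $W(\j)$ and the trace of $\j$ on $\partial I_R \times \mathbb{R}$ is controlled (both possible by Fubini-type averaging), modify $\j$ in a thin boundary layer so that it becomes compatible with $R$-periodicity, correcting also the number of points inside $I_R$ to an integer. This produces an $R$-periodic admissible $\j_R$ whose energy per unit length exceeds $W(\j, \chi_R)/R$ by at most $o(1)$. Applying the periodic lower bound to $\j_R$ and taking $R \to \infty$ then yields $W(\j) \geq -\pi \log(2\pi)$. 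The main obstacle is exactly this periodization/boundary modification: in dimension 1 there is little lateral geometric flexibility to redistribute charges near $\partial I_R$ without blowing up the energy. This is the \emph{screening} construction emphasized in the introduction as the core technical difficulty in 1D; its development occupies a significant portion of the paper and would be invoked here.
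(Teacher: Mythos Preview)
Your overall architecture matches the paper's: reduce to $m=1$ by scaling, compute $W(\j_{\mathbb{Z}})=-\pi\log 2\pi$ from \eqref{WNlog} with $N=1$, prove a lower bound for periodic configurations, and reduce the general case to the periodic one via the screening construction. Two points deserve comment.

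\textbf{Periodic case.} Your argument via the Fekete maximum on the circle is correct and is a genuinely different (and arguably cleaner) route than the paper's. The paper does not appeal to the Vandermonde/discriminant bound; instead it groups the double sum in \eqref{WNlog} by the gap index $p=j-i\pmod N$, writes each block in terms of the spacings $u_{p,i}=a_{i+p}-a_i$ (which satisfy $\sum_i u_{p,i}=pN$), and applies the strict concavity of $x\mapsto\log|2\sin x|$ on $[0,\pi]$ to each block. Both arguments yield the same sharp inequality with the same equality case; yours invokes a classical extremal fact directly, while the paper's convexity argument is self-contained and also identifies equality as forcing all $u_{p,i}$ equal.

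\textbf{General-to-periodic reduction.} Here there is a real gap in your plan. You propose to take an \emph{arbitrary} $\j\in\mathcal A_1$ with $W(\j)<\infty$ and periodize it by screening near $\partial I_R$. But the paper's screening result (Proposition~\ref{tronque}) does \emph{not} apply to every such $\j$: it requires the extra vertical-decay hypothesis \eqref{jcvun}, and the paper explicitly remarks that this is not automatic for finite-energy fields and that screening may fail without it. So ``Fubini-type averaging'' to control the lateral trace is not enough; one also needs $\dashint_{I_R}\int_{|y|>y_0}|\j|^2\to 0$ uniformly, which is an additional structural input. The paper's actual route (Proposition~\ref{periodisation}) is more indirect: it first produces, via Part~C of Theorem~\ref{th2}, a translation-invariant probability measure supported on minimizers of $W$ in $\mathcal A_1$; then Lemma~\ref{GH} (an ergodic/Egoroff argument) shows that almost every such minimizer satisfies \eqref{jcvun}; finally one picks a single such good minimizer $\j_0$ and applies the screening of Proposition~\ref{tronque} to it. Thus the lower bound $\min_{\mathcal A_1}W\ge -\pi\log 2\pi$ is obtained not by periodizing an arbitrary field but by periodizing a carefully selected minimizer. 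Your plan would go through once you restrict from ``arbitrary $\j$'' to ``a minimizer that in addition satisfies \eqref{jcvun}'', but establishing the existence of such a minimizer is itself a nontrivial step that your outline omits.
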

We recall that in dimension 2, it was conjectured in \cite{ss} but not proven, that the minimum value is achieved at the triangular lattice with angles $60^\circ$ (which is shown to achieve the minimum among all lattices), also called the Abrikosov lattice in the context of superconductivity.

The proof of Theorem \ref{thmini} relies on showing that a minimizer can be approximated by configurations which are periodic with period $N \to \infty$ (this result itself relies on the screening construction mentioned at the beginning), and then using a convexity argument to find the  minimizer among periodic configurations with a fixed period via \eqref{WNlog}. 

\edd{The minimizer of $W$ over the class $\mathcal{A}_m$ is not unique, because as already mentioned it suffices to perturb the points of the lattice $m\mz$ in a compact set only, and this leaves $W$ unchanged. However, it is proven by Lebl\'e in \cite{leble} that $W$, once averaged with respect to a translation-invariant probability measures, has a unique minimizer. We now describe more precisely this averaging of $W$ and Lebl\'e's result.

We may extend  $W$ into a function on electric field (or point) processes, as follows:
given any $m>0$, we define
 $$ \overline{W}(P):= \int W(E)\, dP(E)$$ 
over stationary probability measures on $\Lp$ concentrated on the class $\mathcal{A}_m$.
Lebl\'e proves that $\overline{W}$ achieves a unique minimum of value $\min_{\mathcal{A}_m} W$, and 
the unique minimizer is   $P_{\frac1m\mz}$, defined as   the electric field process   associated (via Lemma \ref{casper}) to the point configurations $u+\frac1m\mathbb{Z}$ where $u$ is uniform in $[0,\frac1m]$. In other words, to each $u \in [0,\frac1m]$ we associate the unique (by Lemma \ref{depoint}) periodic electric field $E_{u+ \frac1m \mz}$ such that $\div E= 2\pi( \sum_{p\in \mz} \delta_{u+\frac1m p} - m \delta_\mr)$, and define $P_{\frac1m\mz}$ as the push-forward of the normalized Lebesgue measure on $[0,\frac1m]$ by $u \mapsto E_{u+\frac1m\mz}$.

Lebl\'e's proof is quantitative: he shows the estimate
 \begin{equation}\label{estlebl}
 \left|\int (\ro_2(x,y) - \ro_{2, \mz}(x,y))\varphi (x,y) \right| \le C_\varphi (\overline W(P)+ C)^\hal  (\overline{W}(P)- \min_{\mathcal{A}_m} W)^\hal
 \end{equation} for $\vp \in C^1_c(\mr\times \mr)$, 
where $\ro_{2}$ is the two-point correlation function 
 of the point process associated to $P$ (i.e. given by the push-forward of $P$ by $P\mapsto \frac{1}{2\pi} \div P + m \delta_\mr$) and
 $\ro_{2, \mz}$ is the two-point correlation function associated to the point process $u + \frac1m\mz$ where $u$ follows a uniform law on $[0,\frac1m]$.

We will also need a version of  $W$ for marked electric field processes, in fact it is the one that will play the role of the rate function in our results.
For each  $P\in \P(X)$, we  let
\begin{equation}
\label{Wtilde}
\tW(P) =
 \begin{cases}
\frac{|\E|}{\pi} \int W(E)\, dP(x, E) & \ \text{if $P$ is admissible  } \\
 + \infty & \ \text{otherwise}.
 \end{cases}
 \end{equation}
In view of Theorem \ref{thmini} and the definition of admissible, the minimum of $\tW$ can be guessed to be 
\begin{equation}
\label{defa}
\min \tW=  - \int_\E  m_0(x) \log (2\pi m_0(x))\, dx.\end{equation}
From \cite{leble}, this minimum is uniquely achieved (here the assumption of translation-invariance made in the definition of admissible is the crucial point):
\begin{coro}[\cite{leble}]\label{leble}
The {\it unique} minimizer of $\tW$ on $\mathcal P(X)$ is 
$$P_0=\frac{dx_{|\E}}{|\E|} \otimes  P_{\frac1{m_0(x)} \mathbb{Z}} $$
where $P_{\frac1m\mz}$ has just been defined.
\end{coro}

}

\edd{
\subsection{Link between $\w$ and $W$} 
We are now ready to state the two basic results which link the energies $\w$ and $W$. In the language of Gamma-convergence\footnote{A sequence of functionals $\{f_n\}_n$ Gamma-converges to $f$ if  (i) for any sequence $x_n\to x$, $\liminf_n f_n(x_n)\ge f(x)$  and (ii) for any $x$ there exists a sequence $x_n\to x$ such that $f(x) = \lim_n f_n(x_n)$. See \cite{braides} for an introduction to the subject.} these results establish in essence that the second term in the development of $\w$ by Gamma-convergence is $\tW$ (the first term being $\I$). The consequences for the asymptotics of minimizers of $\w$ and $\Q$ will be stated in the next subsection.

We begin with the following splitting formula which is the starting point to establish this link,} \edd{and which is proved in the appendix}.

%%%%%%%%%%%%%%% Splitting Formula

%We are interested in examining the ``next order" behavior, or that of fluctuations around the limiting law $\mo$.  While  $\I$ and $\mo$ are  found through the  large deviations at speed $n^{2}$, as seen in Theorem \ref{thbg}, we look into the speed $n$ and show, not a large deviations principle, but that a suitably defined average $\int W \, dP $ of the  ``renormalized energy" $W$ defined above \eqref{Wroi} acts   as a sort of rate function, and  that     a ``threshhold phenomenon"  holds: $\int W\, dP \le \min W +C_\beta$ except with exponentially small probability. The crucial fact is that  $C_\beta \to 0$ as $\beta\to \infty$, which shows the crystallization. The average of $W$ corresponds to the average of $W$ computed over all blown-up configurations of the points (with respect to all possible blow-up  centers in $\E$) at the scale $n$. The measure $P$ is like a Young measure on all the possible limits of blow-ups of $\j$.

\begin{lem}[Splitting formula]\label{lemsplit}
For any $n$, any $x_1, \dots, x_n \in \mr$ the following holds
\begin{equation}\label{idwnbu}
\w(x_1, \dots, x_n)= n^2 \I (\mo)- n \log n + \frac{1}{\pi} W(\nab H_n', \indic_{\mr^2} )  +2 n \sum_{i=1}^n  \z (x_i)\end{equation}
where $H_n'$ is as in \eqref{Hp}, $W$ as in \eqref{WR}, and  $\zeta$ as in \eqref{defz}.
\end{lem}

We may then define
\begin{equation}\label{defF}
\F(\nu)=
 \begin{cases}
 \frac{1}{n}\( \frac{1}{\pi} W(\nab  H_n', \indic_{\mr^2}) + 2n\int_{\mr} \z \, d\nu\) & \ \text{if } \ \nu \ \text{is of the form}  \sum_{i=1}^n \delta_{x_i}\\
 + \infty & \ \text{otherwise}
 \end{cases}
\end{equation}
and also
\begin{equation}\label{fnhat}
\widehat{\F}(\nu) = \F(\nu) - 2\int_{\mr} \z \, d\nu \le \F(\nu)\end{equation}
and we thus have the following rewriting of $\w$:
\begin{equation}\label{lienfw}
\boxed{
\w(x_1, \dots, x_n)=  n^2 \I (\mo)- n \log n + n \F(\nu).}\end{equation}
This allows to separate orders in the limit $n \to \infty$  since one of the main outputs of our analysis is that $\F(\nu)$ is of order $1$.

\edd{ We next state some preliminary results which connect directly $F_n$ and $\widetilde{W}$.  The first result is a lower bound corresponding to the lower-bound part in the definition of Gamma-convergence.
We will  systematically abuse notation by writing $(x_1, \dots, x_n)$ instead of $(x_{1,n}, \dots, x_{n,n})$ and  $\nu_n= \sum_{i=1}^n \delta_{x_i}$ instead of  $\nu_n= \sum_{i=1}^n \delta_{x_{i,n}}$.

\begin{theo}[Lower bound] \label{thlowb}  Let the potential $V$ satisfy assumptions \eqref{assumpV1}, \eqref{assumpV4}.
% Fix  $1<p<2$ and let $X= \E \times L^p_\loc(\mr^2,\mr^2)$.
 Let  $\nu_n= \sum_{i=1}^n \delta_{x_i}$ be a sequence such that $ \widehat{F_n}(\nu_n)  \le C$, and let $P_{\nu_n}$ be associated via 
\eqref{pnun}. 

 Then any subsequence of $\{P_{\nu_n}\}_n$ has a convergent subsequence converging
as $n\to \infty$  to an admissible  probability measure $P\in\P(X)$ and 
\begin{equation}\label{thlow}\liminf_{n \to \infty} \widehat{F_n}(\nu_n)    \ge \tW(P).\end{equation} 
\end{theo}

The second result corresponds to the upper-bound part in the definition of Gamma-convergence, with an added precision needed for statements in the finite temperature case. 

\begin{theo}[Upper bound construction.] \label{thuppb} Let the potential $V$ satisfy assumptions \eqref{assumpV1}--\eqref{assumpV4}.
Assume $P\in \P (X) $ is admissible. 

Then, for any  $\eta>  0$, there exists $\delta >0$ and for any $n$ a subset $A_n\subset\mr^n$ such that $|A_n|\ge n!(\delta/n)^n$ and for every sequence $\{\nu_n= \sum_{i=1}^n \delta_{y_i}\}_n$ with $(y_1,\dots,y_n)\in A_n$ the following holds.

i) We have the upper bound
\ed{
\begin{equation}\label{bsw}\limsup_{n \to \infty}\widehat{F_n} (\nu_n) \le  \tW(P)  +\eta.\end{equation}}

ii) There exists $\{\j_n\}_n$ in $L^p_\loc(\mr^2,\mr^2)$ such that $\div \j_n = 2\pi( \nu_n' -\bm'\dr)$ and such that  the image $P_n$ of $dx_{|\E}/|\E|$ by the map $x\mapsto \(x,\j_n(n x+\cdot)\)$ is such that
\begin{equation}\label{convpn} \limsup_{n \to \infty} \dist(P_n,P) \le \eta, \end{equation}
where $\dist$ is a distance which metrizes the topology of weak convergence on $\P(X)$.
\end{theo}

\begin{remark}
Theorem~\ref{thuppb} is only a partial converse to Theorem~\ref{thlowb}  because the constructed $\j_n$ need not be a gradient,  hence in general \smallskip  $\j_n\neq \j_{\nu_n}$.
\end{remark}

A direct consequence of Theorem~\ref{thuppb} (by choosing $\eta=1/k$ and applying a diagonal extraction argument)  is 
\begin{coro}\label{corouppb} Under the same hypothesis as Theorem~\ref{thuppb}  there exists a sequence $\{\nu_n= \sum_{i=1}^n \delta_{x_i}\}_n$ such that 
\begin{equation}\label{thup}\limsup_{n \to \infty} F_n(\nu_n)  \le \tW(P). \end{equation}
Moreover there exists  a sequence $\{\j_n\}_n$ in $L^p_\loc(\mr^2,\mr^2)$ such that $\div  \j_n = 2\pi(\nu_n' -\bm'\dr)$ and  such that defining $P_n$ as in \eqref{pnun}, with $\j_n$ replacing $\j_{\nu_n}$, we have $P_n\to P$ as $n \to \infty$.
\end{coro}
}

\subsection{Main results}\label{sec1.3}

\edd{
Theorems~\ref{thlowb} and ~\ref{thuppb} have straightforward and not-so straightforward consequences which form our main results.

\begin{theo}[Microscopic behavior of weighted Fekete sets] \label{th2}  Let the potential $V$ satisfy assumptions \eqref{assumpV1}--\eqref{assumpV4}.
% Fix  $1<p<2$ and let $X= \E \times L^p_\loc(\mr^2,\mr^2)$.
If $(x_1,\dots, x_n)$ minimizes $w_n$ for every $n$ and  $\nu_n=\sum_{i=1}^n \delta_{x_i}$, then  $P_{\nu_n}$ as defined in \eqref{pnun} converges as $n\to \infty$ to  
 $$P_0=\frac{dx_{|\E}}{|\E|} \otimes  P_{m_0(x) \mathbb{Z}} $$
and 
 $$ \lim_{n \to \infty} F_n(\nu_n) =\lim_{n\to \infty} \widehat{F_n} (\nu_n)=
 \min \tW,
 \quad  \lim_{n\to \infty}\sum_{i=1}^n \zeta(x_i) = 0.$$
\end{theo}

\begin{proof} This follows from the comparison of Theorem~\ref{thlowb} and Corollary~\ref{corouppb}, together with \eqref{fnhat}: For minimizers, \eqref{thlow} and \eqref{thup} must be equalities. Morever we must have $\lim_n( F_n(\nu_n) -  \widehat{F_n} (\nu_n)) = 0$ --- that is $ \lim_{n\to \infty}\sum_{i=1}^n \zeta(x_i) = 0$ --- and $P$ must minimize $\tW$, hence be equal to $P_0$ in view of  Corollary~\ref{leble}. By uniqueness of the limit, the statement is true without extraction of a subsequence.
\end{proof}

}
It can be expected that $\zeta$ (which is positive exactly in the complement of $\E$) controls the distance to $\E$ to some power.  One can show this under suitable assumptions on $V$ by observing that $U^{\mu_0}$ as in \eqref{optcondmo} is the solution to a fractional obstacle problem and using the results in \cite{crs}.
  \medskip

\ed{We next turn to the situation with temperature. The estimates on  $w_n$ that we just obtained first  allow to deduce, as  announced, a  next order asymptotic expansion of the partition function, which becomes sharp as $\beta \to \infty$.
\begin{theo} \label{valz}Let $V$ satisfy assumptions \eqref{assumpV1}---\eqref{assumpVsupp}. There exist functions $f_1, f_2$ depending only on $V$, such that for  any $\beta_0>0 $ and any $ \beta\ge \beta_0$, and for $n$ larger than some $n_0$ depending on $\beta_0$, we have
\begin{equation}
n\beta f_1(\beta) \le \log \Z -\(- \frac{\beta}{2} n^2 \I(\mo) + \frac{\beta}{2} n \log n\)\le  n \beta f_2(\beta),\end{equation}with $f_1,f_2$ bounded in $[\beta_0,+\infty)$ and
\begin{equation}
\lim_{\beta\to\infty} f_1(\beta)=\lim_{\beta\to\infty}f_2(\beta)=\frac{ \min \tW}{2}.
 \end{equation}
\end{theo}
\begin{remark} In fact we prove that the statement holds with  $f_2(\beta)=\frac{\min \tW}{2} + \frac{C}{\beta}$ for any $C>\log |\E|$.\end{remark}
As mentioned above, this result  can be compared to the expansions known in the literature, which can also be obtained as soon as a Central Limit Theorem is proven for general enough $V$'s, cf.  
\cite{joha,bg1,bg2,sh1,sh2}. These previous results generally assume more regularity on $V$ though. It is also not obvious to check that the formulas agree when $\beta \to \infty$ (for which $\min \tW$ is completely explicit, cf. \eqref{defa}) because the coefficients in these prior works are in principle computable but in quite an indirect manner. 

Our method also allows to give a statement on the thermal states themselves (the complete statement in the paper can be phrased as a next order large deviations type estimate,   to be compared to Theorem \ref{thbg}.)

\begin{theo}\label{th4} Let $V$ satisfy \eqref{assumpV1}--\eqref{assumpVsupp}.  There exists $C_\beta>0$ such that $\lim_{\beta\to \infty}C_\beta=0$  and such that the following holds. 
If $\beta>0$ is finite, the law of $P_{\nu_n}$, i.e. the push-forward of $\Q$ by $i_n$ defined in \eqref{in} converges weakly, up to extraction,  to a probability measure $\widetilde{\mathbb{P}^\beta}$  in $\P(\P(X))$ concentrated on admissible probabilities;  and for $\widetilde{\mathbb{P}^\beta}$-almost every $P$ it holds that 
$$\tW (P) \le \min \tW + C_\beta.$$
\end{theo}The first statement in the result is the existence  of a limiting random electric field process, hence equivalently, via projecting by $(x, \j) \mapsto \frac{1}{2\pi} \div \j + m_0(x)\dr$, of a limiting random point process.
The second statement allows to quantify the average distance to the crystalline state  as $\beta$ gets large, using \eqref{estlebl}:
\begin{coro}
\edd{Let $P \in \P(X)$ be admissible, and let us write its disintegration 
$P=  \{ P^x\}_{x\in\E} $ where $x$-a.e. in $\Sigma$, $P^x$ is a probability measure on $\Lp$ concentrated on $\mathcal{A}_{m_0(x)}$.  Let $\ro_2^x$ be the two-point correlation function  of the point process given as the push forward of $P^x$ by $E \mapsto \frac{1}{2\pi} \div E 
+m_0(x) \delta_\mr$.  Let $\ro_{2, \frac1m\mz}$ be the two-point  correlation function associated  to $P_{\frac1m\mz}$ as above. Then, for any  $\widetilde{\mathbb{P}^\beta}$ obtained by Theorem \ref{th4}, it holds for  $\widetilde{\mathbb{P}^\beta}$-almost every $P$ and any smooth compactly supported $\varphi$ that 
$$ \dashint_{\Sigma} \left|\int (\ro_2^x-\ro_{2, \frac1{m_0(x)} \mz} ) \varphi \right|\, dx  \le C_\varphi C_\beta,$$
}
 where $C_\varphi $ depends only on $\varphi$, and $C_\beta $ is as in Theorem \ref{th4}.
\end{coro}

Since $C_\beta \to 0$, our results can thus be seen as a result of crystallization as $\beta\to \infty$. We believe that when $\beta$ is finite a complete large deviations principle should hold with a rate function involving both $\tW$ and a relative entropy term, whose weight decreases as $\beta \to \infty$. This is work in progress \cite{lebles}.}
%.....

\smallskip

Finally, let us mention that our method yields estimates on the probability of some rare events, typically the probability that the number of points in a microscopic interval deviates from the number given by $\mo$. We present them below, even though stronger results are obtained in \cite{bey1,bey2}. The results below follow easily  from  the estimate (provided by Theorem \ref{valz}) that
$\widehat{\F}\le C$  except on a set of small probability.
%\cm{j'ai enleve pas mal du theoreme pour simplifier}

%We use the notation
%\begin{equation} \label{dxr} D(x,R)= \nu_n \([x-  R/n, x+R/n] \) -n \mo \([x-  R/n, x+ R/n] \). \end{equation}
%where $x' = n x$ as usual, and $B(x,r)$ denotes the one-dimensional ball of center $x$ and radius $r$.

\begin{theo} \label{th3}
Let $V$ satisfy assumptions  \eqref{assumpV1}--\eqref{assumpVsupp}.
There exists a universal constant $R_0>0$ and $c,C>0$ depending only on $V$ such that: For any $\beta_0>0$, any $\beta  \ge \beta_0$, any $n$ large enough depending on $\beta_0$,  for   any $x_1,\ldots,x_n\in \mr$, any $R>R_0$,  any interval $I\subset \mr $ of length $R/n$,  and any $\eta>0$, letting  $\nu_n=\sum_{i=1}^n \delta_{x_i}$,  we have the following:
\begin{equation}\label{local} \log \Q \( \left|\nu_n(I)- n\mo(I)\right|\ge \eta R   \) \le
- c\beta \min(\eta^2,\eta^3) R^2 + C \beta (R+ n) + C  n ,\end{equation}
\begin{equation}\label{contrsob}
\log \Q\( ( 1+ R^2/n^2)^{\hal - \frac{1}{q}}  \|\nu_n- n\mu_0\|_{W^{-1,q}(I)} \ge \eta \sqrt{n} \)
\le - cn \beta \eta^2   +  Cn (\beta + 1)
\end{equation} where $W^{-1,q}(I)$ is the dual of the Sobolev space  $W^{1,q'}_0(I)$, with $1/q+ 1/q'=1$, in particular $W^{-1,1}$ is the dual of Lipschitz functions;
and
\begin{equation}\label{controlez}
\log\Q\( \int\zeta
 \, d\nu_n \ge \eta\) \le - \hal  n \beta\eta+ C  n (\beta+1) .\end{equation}

%Moreover, for any smooth bounded  $U' = \sqrt n U \subset \mr^2$,
%\begin{equation}\label{fubi}
%\log\Q\( \int_{U'}   \frac{ D(x',R)^2}{R^2} \min \(1,\frac{|D(x',R)|}{R^2} \) \, dx'  \ge
 % \eta \) \le   n \beta ( - c\eta+ C |U| + C ) +Cn.
%\end{equation}
%Finally, if $q\in[1,2)$ there exists $c,C>0$ depending on $V$ and $q$ such that $\forall\eta\ge 1$, $R>0$,
%\begin{equation}\label{cv}
%\log\Q\(\(1+\frac{R^2}n\)^{\frac12-\frac1q} \|\nu - n \mo\|_{W^{-1,q}(B_{R/n})}\ge
%\eta n\)\le -c n\beta\eta^2 + Cn(\beta+1),
%\end{equation}
%where $W^{-1,q}(\Omega)$ is  the dual of the Sobolev space $W^{1,p}_0(\Omega)$ with $\frac{1}{p}+\frac{1}{q}=1$; in particular $W^{-1,1}$ is the dual of Lipschitz functions.
\end{theo}
%
%We note that more results could be proved, i.e. all the analogues of Theorem 3 in \cite{ma2d}, using the same method. However we do not include them for the sake of shortness.
Note that in these results $R$ can be taken to depend on $n$.

\eqref{local} tells us that the
density of eigenvalues is correctly approximated by the limiting law
$\mo$ at all small scales bigger than $C n^{-1/2}$ for some $C$.
However this in fact should hold at all scales with $R\gg 1$, cf. \smallskip \cite{esy,bey1,bey2}. \eqref{controlez} serves to control the probability that points are outside $\E$ (since $\{\zeta >0\}= \E^c$).
\medskip

\edd{The rest of the paper is organized as follows. In Section 2 we prove Theorem~\ref{thlowb}, in Section~\ref{secupp} we prove Theorem~\ref{thuppb}. In section~4 we prove the remaining theorems. In the appendix we prove  Lemmas~\ref{depoint} and~\ref{lemsplit}, as well as   the main screening result Proposition~\ref{tronque}.}
\smallskip

{\it Acknowledgements:}
We are grateful to  Alexei Borodin,  G\'erard Ben Arous, Amir Dembo, Percy Deift, Nicolas Fournier,  Alice Guionnet and Ofer Zeitouni for their interest and helpful discussions.
E. S. was supported by the Institut Universitaire de France and S.S. by a EURYI award.

\edd{\section{Lower bound} In this section we prove Theorem~\ref{thlowb}.}
\label{seclow}
\subsection{Preliminaries: a mass displacement result}
\edd{In this subsection, we state  the analogue in 1D of Proposition 4.9  in \cite{ss}, a result we will need later. The proposition below asserts that, even though the energy density $\hal |\j|^2 + \pi  \log \eta \sum_p \delta_p$  associated to $W$  is not bounded below, there exists a replacement $g$  which is. The sense in which $g$ is a replacement for the energy density of $W$ (specified in the statement of the proposition)  is what is needed to make the energy density of $W$ effectively behave as if it were bounded from below.}

The density $g$ is obtained by displacing the negative part of the energy-density into the positive part. The proof is identical to that of  \cite{ss} once the one-dimensional setting has been embedded into the two-dimensional one as stated. What follows will be applied to $\nu_n'$, i.e. the measure in blown-up coordinates.

%For any set $\om\subset \mr$, $\widehat{\om}$ denotes  its 1-tubular neighborhood, i.e.
 %$\{x\in \mr, \dist (x, \om) < 1\}$
\begin{pro}\label{wspread} Assume $(\nu,\j)$ are such that $\nu = 2\pi \sum_{p\in\Lambda}
  \delta_p$ for some finite subset $\Lambda$ of $\mr$,
   $\div \j = 2\pi (\nu - a(x)\dr)$,   for some $a\in L^\infty(\mr)$, and $\j$ is a gradient. 
   
Then, given $0<\rho<\rho_0$, where $\rho_0$ is universal,  there exists a measure density  $g$ in $\mr^2$  such that
\begin{itemize}
\item[i)]
%Given any constant $0<\rho<\hal$,
There exists a family of disjoint closed balls $\mathcal{B}_\ro$ centered on the real line,  covering $\supp(\nu)$, such that the   sum of  the radii  of the balls in $\mathcal{B}_\ro$ intersected with any segment  of $\mr$ of length $1$ is  bounded by $\ro$  and  such that
\begin{equation}\label{lbg}
g\ge - C (\|a\|_{L^\infty}+1) + \frac14 |\j|^2\indic_{ \mr^2 \sm \mathcal{B}_\ro}\quad \text{in} \ \mr^2,\end{equation}
  where $C$ depends only on $\rho$.
 \item[ii)]$$ g = \hal |\j|^2   \text{\ in the complement of }  \ \mr \times [-1,1].$$
\item[iii)]  For any function $\chi$ compactly supported in $\mr$ we have, letting $\bar\chi(x,y) = \chi(x)$,
\begin{equation}\label{wg}\left|W(\j,\chi) - \int \bar{\chi}\,dg\right|
\le C N (\log N + \|a\|_\infty)\|\nab\chi\|_\infty,\end{equation}
where  $N=\#\{p\in\Lambda\mid B(p,\lambda)\cap\supp(\nabla\bar{\chi})\neq\varnothing\}$ and $\lambda$   depends only on $\rho$.\edd{ (Here $\#A$ denotes the cardinality of $A$).}
%\item[iv] For any  $U\subset \om$,
%\begin{equation}\label{bgnalpha}\#(\Lambda\cap U) \le C\(1+\|m\|_\infty^2|\widehat U| + g(\widehat U \times  [-C,C] )\).\end{equation}
%\footnote{utile?}
\end{itemize}
\end{pro}

%Note that the result in \cite{ss} was not stated for any $\rho$ but a careful inspection of the proof there allows to show that it can be readapted to make $\rho$ arbitrarily small. The result of item ii) follows from the fact that all mass displacement is performed within a fixed distance (say $1$) from the points, hence in our setting at distance less than $1$ from the real axis.
 Proposition~4.9 of \cite{ss} of which the above proposition is a restatement, was stated for a fixed universal $\rho_0$, but we may use instead in its proof any $0<\rho<\rho_0$, which makes the constant $C$ above depend on $\rho$. Another fact which is true from the proof of  \cite[Proposition~4.9]{ss} but not stated in the proposition itself is that in fact $g = \hal|\j|^2$ outside $\cup_p B(p,r)$ for some constant $r>0$ depending only on $\rho$, and  if $\rho$ is taken small enough, then we may take $r=1$, which yields item ii) of Proposition~\ref{wspread}.

The next lemma shows that a control on $W$ implies a corresponding control on $\int g$ and of $\int |\j|^2$ away from the real axis, growing only like $R$.
\begin{lem}\label{wtog}
Assume that $G\subset\mathcal A_1$ is such that, writing $\nu= \frac{1}{2\pi}  \div \j + \dr $, 
\begin{equation} \label{hypunif}  \forall R>1,\  \frac{\nu(I_R)}{R} < C,\end{equation}
\begin{equation}\label{cvwunif}
 \lim_{R\to +\infty} \frac{W(\j,\chi_R)}{R}  = W(\j) <C,\end{equation}
 hold uniformly w.r.t. $\j\in G$. Then for any $\j\in G$, for every $R$ large enough depending on $G$, we have
\begin{equation}\label{contrnu}
|\nu(I_R)-R|\le C_1 R^{3/4} \log R,
\end{equation}
\begin{equation}\label{jband}
\int_{ I_R\times \{|y|>1\}} |\j|^2 \le C R \(W(\j)+1\),\end{equation}
and  denoting by $g$ the result of applying Proposition~\ref{wspread} to $\j$ for some fixed value $\rho<1/8$,  we have
\begin{equation}\label{chirg}
W(\j, \chi_R ) - C_1 R^{3/4} \log^2 R \le \int_{{I}_R \times \mr} dg \le W(\j, \chi_{R+1}) + C_1 R^{3/4} \log^2 R,\end{equation}
where $\chi_R$ satisfies \eqref{defchi}, $C_1$ depends only on $G$ and $C$ is a universal constant.
\end{lem}
\begin{proof}
We denote by $C_1$ any constant depending only on $G$, and by $C$ any universal constant.
From \eqref{hypunif}, \eqref{cvwunif} we have for any $\j\in G$ that $\nu(I_R) \le C_1 R$  and $W(\j, \chi_R) \le C_1 R$ if $R$ is large enough depending on $G$. Thus, applying \eqref{wg} we have
$$\left|\int {\chi}_R \, dg \right|\le C_1 R (\log R+1)$$
which, in view of the fact that $\chi_R=1$ in $I_{R-1}$ and $g$ is positive outside $\mr \times [-1, 1]$ and bounded below by a constant otherwise, yields that for every $R$ large enough,
\begin{equation}
\label{gir}
\int_{{I}_{R-1} \times\mr} dg\le C_1 R(\log R+1).\end{equation}
This in turn  implies --- using   \eqref{lbg} and the fact that $\hal|\j|^2 = g$ outside $\mr\times[-1,1]$ ---  the first (unsufficient) control
$$\int_{\{(x, y) | (x,0)\notin \cup \B_\rho\}} |\j|^2 \le C_1 R(\log R+1).$$
Since the  sum of  the radii  of the balls in $\mathcal{B}_\ro$ intersected with any segment  of $\mr$ of length $1$ bounded by $\ro<1/8 $,  we deduce by a mean value argument with respect to the variable $x$ that there exists  $t \in [0,1]$ such that
\begin{equation}\label{jcote} \int_\mr  \left|\j\(-\frac{R}{2} -t,y\)\right|^2 +  \left|\j\(\frac{R}{2} +t,y\)\right|^2\,dy   \le C_1 R (\log R+1).\end{equation}
Using now a mean value argument with respect to $y$,  we deduce from \eqref{gir} the existence of  $y_R \in [1, 1+ \sqrt{R}] $ such that
\begin{equation}\label{jhaut} \int_{-\frac{R}{2}-t}^{\frac{R}{2}+t}|\j(x,y_R)|^2+|\j(x,-y_R)|^2\,dx \le C_1 \sqrt{R} (\log R+1).
\end{equation}
Next, we integrate  $\div \j= 2\pi(\nu- \dr)$ on the square $[-\frac{R}{2}-t, \frac{R}{2}+t]\times [-y_R, y_R]$. We find using the symmetry property of Corollary~\ref{sym} that
$$\left|\nu(I_{R-\frac{t}{2}} ) - R +2t \right| \le
\int_{-y_R}^{y_R} \left|\j\(-\frac{R}{2} -t,y\)\right| +  \left|\j\(\frac{R}{2} +t,y\)\right|\,dy  + \int_{-\frac{R}{2}-t}^{\frac{R}{2}+t}|\j(x,y_R)|\,dx.$$
Using the Cauchy-Schwarz inequality and \eqref{jcote}-\eqref{jhaut}, this leads for $R$ large enough to
\begin{equation}\label{rtq}\left| \nu(I_{R-\frac{t}{2}} ) - R \right|\le 2 +  C_1 R^{3/4} \sqrt{\log R+1} +C_1 \sqrt{y_R} \sqrt{R(\log R+1)} \le C_1 R^{3/4} (\log R +1),\end{equation}
and then --- since $\nu(I_R) \ge  \nu(I_{R-t/2})$ --- to
$$\nu (I_R) - R\ge C_1 R^{3/4} \log R.$$

To prove the same upper bound for $R-\nu(I_R)$ we proceed in the same way, but using a mean value argument  to find some $t\in (-1,0)$ instead of $(0,1)$ such that \eqref{jcote} holds, and then \eqref{jhaut} also. We deduce as above that \eqref{rtq} holds and conclude by noting that since $t\in(-1,0)$ we have $\nu(I_R) \le  \nu(I_{R-t/2})$.
This establishes \eqref{contrnu}.

 We may bootstrap this information:   Indeed \eqref{contrnu} implies in particular that $\nu(I_R)- \nu(I_{R-1})  \le C_1 R^{3/4} \log R$ and thus we deduce from \eqref{wg} that \eqref{chirg} holds:
$$\left| W(\j, \chi_R)- \int \chi_R \, dg\right|\le C_1 R^{3/4} \log^2 R.$$
Then since $W(\j, \chi_R)/R\to W(\j)$ as $R\to\infty$ uniformly w.r.t. $\j\in G$ and since $g$ is both bounded from below by a universal constant and equal to $\hal|\j|^2$ outside $\mr\times[-1,1]$, we deduce \eqref{jband}, for $R$ large enough depending on $G$.

\end{proof}

\begin{defi}\label{defG} Assume  $\nu_n = \sum_{i=1}^n\delta_{x_i}$. Letting $\nu'_n
 = \sum_{i=1}^n\delta_{x_i'}$ be the measure in blown-up coordinates, i.e. $x_i' = n x_i$,  and
  $\j_{\nu_n}
 = - \nab H'_n$, where $H'_n$ is defined by \eqref{Hp}, we denote by
 $g_{\nu_n}$
  the result of applying
   Proposition~\ref{wspread}  to $(\nu'_n,\j_{\nu_n})$.%% in $\mr$.
\end{defi}

\edd{\subsection{Proof of Theorem~\ref{thlowb}}}

We start with a result that shows how $\widehat{F_n}$ controls the fluctuation $\nu_n - n\mu_0$.

\begin{lem}\label{lemcontrole}
Let $\nu_n=\sum_{i=1}^n \delta_{x_i}$. For any  interval $I$ of width $R$  (possibly depending on $n$) and
any    $1<q<2$, we have
$$\|\nu_n- n\mu_0\|_{W^{-1,q}(I)}\le C_q (1+R^2)^{\frac{1}{q}- \frac{1}{2}} n^{\hal} \( \widehat{F_n}(\nu_n) + 1\)^\hal.$$
 Here $W^{-1,q}$ is the dual of the Sobolev space $W^{1,q'}_0$ with $1/q+1/q'=1$.
\end{lem}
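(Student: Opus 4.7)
The plan is to proceed by duality, lifting the 1D problem to 2D through the Coulomb potential. For $\phi\in W^{1,q'}_0(I)$ I would extend $\phi$ to a compactly supported $\Phi$ on $\mathbb{R}^2$, for instance $\Phi(x,y)=\phi(x)\chi(y)$ with $\chi\in C_c^\infty(\mathbb{R})$ satisfying $\chi(0)=1$ and supported in $[-\ell,\ell]$ for a parameter $\ell$ to be optimized. Since the scale-$1$ potential $H_n(z):=H'_n(nz)$ satisfies $-\Delta H_n = 2\pi(\nu_n - n\mu_0)$ in $\mathbb{R}^2$ (with $\nu_n,n\mu_0$ viewed as measures supported on $\mathbb{R}\subset\mathbb R^2$), integration by parts and H\"older's inequality give
\[\Bigl|\int_I\phi\,d(\nu_n-n\mu_0)\Bigr|=\frac{1}{2\pi}\Bigl|\int_{\mathbb{R}^2}\nabla\Phi\cdot\nabla H_n\,dz\Bigr|\le\frac{1}{2\pi}\|\nabla\Phi\|_{L^{q'}(\Omega)}\|\nabla H_n\|_{L^q(\Omega)},\]
where $\Omega=I\times[-\ell,\ell]$, reducing the problem to separately bounding these two factors.

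A Poincar\'e inequality on $I$ (where $\phi$ vanishes on $\partial I$) and a direct computation on the separated-variables extension give $\|\nabla\Phi\|_{L^{q'}(\Omega)}^{q'}\le C(\ell+\ell^{1-q'}R^{q'})\|\nabla\phi\|_{L^{q'}(I)}^{q'}$, which after optimizing in $\ell$ yields $\|\nabla\Phi\|_{L^{q'}(\Omega)}\le C(1+R^2)^{1/(2q')}\|\nabla\phi\|_{L^{q'}(I)}$. For the second factor I would rescale to blown-up coordinates via $\|\nabla H_n\|_{L^q(\Omega)}=n^{1-2/q}\|\j_{\nu_n}\|_{L^q(n\Omega)}$ and then apply Proposition~\ref{wspread}, which provides a density $g=g_{\nu_n}$ with $\int dg=n\pi\widehat{F_n}(\nu_n)$ and the pointwise lower bound $\tfrac{1}{4}|\j|^2\mathbf{1}_{\mathcal{B}_\rho^c}\le g+C$. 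Crucially, $g\ge 0$ outside the strip $\mathbb{R}\times[-1,1]$, so $\int_{n\Omega}g\le\int dg^+\le n\pi\widehat{F_n}(\nu_n)+Cn$ (with the second summand coming from the negative part $g^-$ whose support has area bounded by $Cn$), while the volume of $n\Omega\cap\mathbb{R}\times[-1,1]$ enters only linearly in $nR$. This gives $\|\j_{\nu_n}\|_{L^2(n\Omega\setminus\mathcal{B}_\rho)}^2\le C(n(\widehat{F_n}+1)+nR)$. Combining H\"older $\|\j\|_{L^q}\le|n\Omega|^{1/q-1/2}\|\j\|_{L^2}$ with the local singular contribution inside $\mathcal{B}_\rho$ (handled by the decomposition $\j=-\nabla\log|\cdot-p|+f$ summed over the $O(nR)$ points, each ball contributing $r_p^{2-q}\le C$) then yields the required bound on $\|\nabla H_n\|_{L^q(\Omega)}$, and the product of the two estimates yields the lemma.

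The main obstacle is achieving the precise factor $(1+R^2)^{1/q-1/2}$ in the final product: this requires carefully exploiting that the negative part $g^-$ is concentrated on the thin strip $\mathbb{R}\times[-1,1]$, so that the background volume entering the $L^2$-estimate for $\j_{\nu_n}$ remains linear in $nR$ rather than quadratic, and so does not dominate the $\sqrt{n\widehat{F_n}}$ scaling; lower-order contributions of the form $\sqrt{nR}$ and the singular terms from $\mathcal{B}_\rho$ are then absorbed into the ``$+1$'' in $\widehat{F_n}+1$.
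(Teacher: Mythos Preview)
Your duality-and-extension setup is exactly what underlies the result the paper cites (\cite[Lemma~5.1]{ma2d}); the paper does not spell this out but simply invokes that lemma, whose proof rests on the Lorentz-space estimate of \cite{STi}, namely $\|\j_{\nu_n}\|_{L^{2,\infty}(\mr^2)}^2\le Cn\bigl(\widehat{F_n}(\nu_n)+1\bigr)$. From this one gets $\|\j_{\nu_n}\|_{L^q(A)}\le C_q|A|^{1/q-1/2}\|\j_{\nu_n}\|_{L^{2,\infty}}$ for any bounded $A$, which handles the singular and regular parts of the field in one stroke. Your more elementary substitute --- $L^2$ outside the balls $\mathcal B_\rho$ via Proposition~\ref{wspread}, plus a local estimate inside the balls --- has a gap in the second piece. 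The claim that each ball contributes $r_p^{2-q}\le C$ accounts only for the leading term $-\nabla\log|\cdot-p|$, not for the remainder $f$ in $\j=-\nabla\log|\cdot-p|+f$. That remainder carries the field of all the \emph{other} charges; it is smooth on $B(p,r_p)$ but has no uniform pointwise bound (if another point $p'$ lies just outside the ball, $|f|$ is of order $1/|\cdot-p'|$ there). Hence $\int_{B(p,r_p)}|\j|^q$ is not controlled by a universal constant per ball, and the sum over balls does not close. The Lorentz bound of \cite{STi} is precisely the device that makes this step work; without it you would have to control the harmonic part of $f$ on each ball by $\int|\j|^2$ on a surrounding annulus and feed that back into the $g$-estimate --- possible, but essentially re-deriving part of \cite{STi}.

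A smaller point: even granting the correct $L^q$ bound on $\j$, your optimized product extension yields $\|\nabla\Phi\|_{L^{q'}}\sim (1+R)^{1/q'}\|\phi'\|_{L^{q'}}$, so the final product scales like $(1+R)^{1/q}$ rather than the stated $(1+R^2)^{1/q-1/2}$. The same extension step is hidden in the paper's ``immediate'' passage from the two-dimensional $W^{-1,q}(B_R)$ bound to the one-dimensional interval bound, so this discrepancy is shared; it is harmless in the applications here since the lemma is only used with $R$ bounded.
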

 \begin{proof}
In   \cite[Lemma 5.1]{ma2d}, we have the following statement
$$\|\nu_n- n\mu_0\dr\|_{W^{-1,q}(B_R)}\le C_q (1+R^2)^{\frac{1}{q}- \frac{1}{2}} n^{\hal} \( \widehat{F_n}(\nu_n) + 1\)^\hal.$$
The proof is based on \cite{STi} which works in our one-dimensional  context as well, thus the proof can be reproduced without change.
 It is immediate to deduce the result. \end{proof}

We now turn to bounding from below $\widehat{F_n}$. The proof is the same as in  \cite[Sec. 6]{ma2d}, itself following the method of \cite{ss} based on the ergodic theorem. We just state the main ingredients.

 Let  $\{\nu_n\}_n$ and $P_{\nu_n}$ be as in the statement of Theorem~\ref{th2}. We need to prove that any subsequence of $\{P_{\nu_n}\}_n$ has a convergent subsequence and that the limit $P$ is admissible and  \eqref{thlow} holds. Note that the fact that the first marginal of $P$ is $dx_{|\E}/|\E|$ follows from the fact that, by definition, this is true of $P_{\nu_n}$.

We thus take a subsequence of $\{P_{\nu_n}\}$ (which we don't  relabel),  which satisfies  $\widehat{\F}(\nu_n)\le C $. This implies that  $\nu_n$ is of the form $\sum_{i=1}^n \delta_{x_{i,n}}$. We let  $\j_n$ denote the electric field  and $\bgn$ the measures associated to $\bnun$ as in Definition~\ref{defG}. As usual, $\bnun'= \sum_{i=1}^n \delta_{ n x_{i,n}}$.

A useful consequence of $\widehat{\F}(\bnun)\le C $ is that, using Lemma \ref{lemcontrole}, we have
\begin{equation}\label{cvnu} \frac1n\bnun\to\mo \quad \text{on} \ \mr.\end{equation}
 We  then set up the framework of Section~6.1 in \cite{ma2d} for obtaining lower bounds on two-scale energies.   We let $G = \E$ and $X= \radon_+\times \Lp \times  \radon$, where $p\in (1,2)$, %$\Lp$ is equipped with the topology of the local convergence,
where $\radon_+$ denotes the set of positive Radon measures on $\mr^2$ and $\radon$ the set of those which are bounded below by the  constant $-C_V:= -C (\|\bm\|_\infty +1)$  of Proposition \ref{wspread}, both equipped with the topology of weak convergence.

For $\lambda\in\mr$ and abusing notation we let $\theta_\lambda$ denote both  the translation $x\mapsto x+\lambda$ and the action
$$\theta_\lambda (\nu,\j,g) = \(\theta_\lambda\#\nu, \j\circ\theta_\lambda,\theta_\lambda\# g\).$$
Accordingly the action $T^n$ on $\E \times X$ is defined for $\lambda\in\mr$ by
$$T^n_\lambda (x,\nu,\j,g) = \(x+\frac\lambda{ n}, \theta_\lambda\#\nu, \j\circ\theta_\lambda,\theta_\lambda\# g\).$$
Then we let $\chi$ be a smooth  nonnegative cut-off function with integral $1$ and support in $[-1,1]$ and
define
\begin{equation}\label{deffn} \f_n(x, \nu,\j, g) =\begin{cases} \D\frac{1}{ \pi} \int_{\mr^2} \chi(t)\, dg(t,s)
 & \text{if $(\nu,\j,g) = \theta_{ n x} (\bnun', \bjn, \bgn)$,}\\ +\infty & \text{otherwise.}\end{cases}
\end{equation}
Finally we let,
\begin{equation}\label{deF} \gF_n(\nu,\j, g) =\dashint_\E \f_n\(x,\theta_{x  n}(\nu,\j,g)\)\,dx.
\end{equation}
We have the following relation between $\gF_n$ and $\widehat{\F}$, as $n\to +\infty$ (see \cite[Sec. 6]{ma2d}):
\begin{equation}\label{FF}
\text{ $\gF_n(\nu,\j,g) $ is  }\quad \begin{cases} \le \frac1{|\E|}\widehat{\F}(\bnun) +o(1) & \text{if $(\nu,\j,g) = (\bnun',\bjn,\bgn)$}\\  = +\infty & \text{otherwise}.\end{cases}
\end{equation}
The hypotheses in  Section~6.1 of \cite{ma2d} are satisfied and applying the abstract result, Theorem 6 of \cite{ma2d}, we conclude that
 letting $Q_n$ denote the push-forward of the normalized Lebesgue measure on $\E$ by the map $x\mapsto (x,\theta_{ n x} (\bnun', \bjn,\bgn))$, and $Q = \lim_n Q_n,$ we have
 \begin{equation}\label{thlow1}\liminf_n  \frac1{|\E|}\widehat{\F}(\bnun)\ge \frac{1}{\pi} \int W(\j) \,dQ(x,\nu,\j,g)\end{equation}
and,  $Q$-a.e. $(\j,\nu)\in \mathcal A_{\bm(x)}$.

Now we let $P_n$ (resp. $P$) be the marginal of $Q_n$ (resp. $Q$) with respect to the variables $(x,\j)$. Then  the first marginal of $P$ is the normalized Lebesgue measure on $E$ and $P$-a.e. we have $\j\in\mathcal A_{\bm(x)}$, in particular
% The lower bound \eqref{thlow} follows from  \eqref{thlow1} and
%$$\int  W(\j)\, dQ(x, \nu,\j, g) \ge \int(\min_{\mathcal A_{\bm(x)}} W ) \, dQ(x, \nu,\j, g)  = \dashint_E \bm(x) \(\min_{\mathcal A_1} W -\frac{\pi}{2}\log\bm(x)\)\,dx.$$
%where we can replace $Q$ by $P$ since the variables $\nu$, $g$ do not appear in the integrand.
$$W(\j)\ge \min_{\mathcal A_{\bm(x)}} W = \bm(x) \(\min_{\mathcal A_1} W  - \pi\log \bm(x)\).$$ Integrating with respect to $P$ and noting that since only $x$ appears on the right-hand side we may replace $P$ by its first marginal there, we find, in view of \eqref{defa} that the lower bound  \eqref{thlow} holds.

\edd{\section{Upper bound} In this section we prove Theorem~\ref{thuppb}.
\label{secupp}
The construction consists of the following.

First  we state our  main screening result, whose proof  is given in the appendix, on which the proof of Theorem~\ref{thuppb} is based, and which is the main difference with the two-dimensional situation. It allows to truncate electric fields to allow all sorts of cutting and pastings necessary for the construction. However, for the truncation process to have good properties, an extra hypothesis (see \eqref{jcvun}) needs to be satisfied.

The second step consists in selecting a finite set of vector fields  $J_1, \dots, J_{N}$ ($N$ will depend on $\ep$) such that the marginal  of the probability $P(x,\j)$ with respect to  $\j$ is well-approximated by measures supported on  the orbits  of the  $J_i$'s under translations. This is possible because $P$ is assumed to be $T_{\lambda(x)}$-invariant.  It is  during this approximation process that we manage to select the $J_i$'s as belonging to a part of the support of $P$ of almost full measure  for which  the extra assumption \eqref{jcvun} holds and the screening can be performed.

Third, we work in blown-up coordinates and  split the region $\E'$ (of  order $n$ size) into many  intervals, and then select the proportion of the intervals that corresponds to the relative weight that the orbit of each $J_i $ carries in the approximation of $P$. In these rectangles we paste a (translated) copy of \ed{(the screened version of)} $J_i$ at the appropriate scale (approximating the density $\bm'$ by a piecewise constant one and controlling errors).

To conclude the proof of Theorem~\ref{thuppb}, we collect all of the estimates on the constructed vector field to show that its energy $\w$ is bounded above in terms of $\tW $ and that the probability measures associated to the construction have remained close to $P$.

In what follows we use the notation $\theta_\lambda\j(x,y) = \j(x+\lambda,y)$ for the translates of $\j$, and $\sigma_m\j(x,y) = m  \j(m x, my)$ for the dilates of $\j$.
}

\subsection{The main screening result}\label{sec3}

 \ed{This result says 
that starting from an electric field with finite $W$ which also satisfies some appropriate decay property away from the real axis, we may truncate it in a  strip of width $R$, keep it unchanged in a slightly narrower rectangle around the real axis, and use the  layer between the two strips to transition to a vector field which is tangent to the boundary, while paying only a negligible energy cost  in the transition layer as $R\to \infty$.  The new electric field $E_R$ thus constructed can then be extended outside of the strip by other vector fields satisfying the same condition of being tangent to the boundary of the strip. Because the divergence of a vector field which is discontinuous across an interface is equal (in the sense of distributions) to the jump of the normal derivative across the interface, pasting two such vector fields together will not create any divergence along the boundary interface. We will thus be able to construct   vector fields that still satisfy globally equations of the form \eqref{eqj}, the only loss being that they may no longer be gradients.
However, this can be overcome by projecting them later onto gradients (in the $L^2$ sense), and since the $L^2$  projection decreases the $L^2$ norm, this operation can only decrease the energy, while keeping the relation \eqref{eqj} unchanged.}

\begin{pro}\label{tronque}
Let $I_R=[-R/2,R/2]$, let $\chi_R$ satisfy \eqref{defchi}.

Assume $G\subset\mathcal A_1$ is  such that there exists $C>0$ such that for any $ \j\in G$  and writing $ \nu = \frac{1}{2\pi} \div \j + \dr$ we have \eqref{hypunif}, \eqref{cvwunif}
and
\begin{equation}\label{jcvun}
 \lim_{y_0 \to + \infty} \lim_{R \to +\infty} \dashint_{I_R} \int_{|y|>y_0} |\j|^2=0,\end{equation}
and such that moreover all the convergences are uniform  w.r.t. $\j\in G$.

Then for every  $0<\ep<1$, there exists  $R_0>0$     such that if $R>R_0$ with  $R\in \mn$, then for every $\j \in G$ there exists a vector field $\j_R\in L^p_{loc}( I_R \times \mr, \mr^2)$
such that the following holds:
\begin{itemize}
\item[i)]   $\j_R \cdot \vec{\nu} =0$ on $\p {I}_R\times \mr$, where $\vec{\nu}$ denotes the outer unit normal.
\item[ii)] There is a discrete subset $\Lambda \subset I_R$ such that $$\div \j_R = 2\pi \(\sum_{p\in\Lambda}\delta_p -  \delta_\mr\) \quad \text{in} \  I_R\times \mr.$$
\item[iii)]  $\j_R(x,y) = \j(x,y)$ for $x \in [- R/2+\ep R, R/2-\ep R]$.
\item[iv)]
\begin{equation}\label{restronque} \frac{W(\j_R,\indic_{I_R\times \mr })}{R} \le W(\j)+ C\ep.\end{equation}%% where $C$ depends only on the constants and convergences in \eqref{hypunif}--\eqref{jcvun}.
\end{itemize}
\end{pro}

\edd{\begin{remark} The assumption \eqref{jcvun} is a supplementary assumption which allows to perform the screening  but which is {\it not necessarily} satisfied for all $\j\in\mathcal{A}_m$, even those satisfying $W(\j)<+\infty$. We believe a counter example could be constructed as follows: let $z_k = (2^k,0)$ and $$\mu = \sum_{k=1}^\infty \frac{(-1)^k 2^{k/2}}{\sqrt k}(\delta_{z_k} - \delta_{-z_k}),\quad U = \Delta^{-1} \mu.$$ Then 
$$\int_{B(z_k,2^{k-1})\setminus B(z_k,k)} |\nab U|^2\ge \pi\((k-1)\log 2 - \log k\)\frac{2^k}k\ge C_0>0,$$
hence 
$$\dashint_{I_{2^{k+1}}}\int_{|y|>k} |\nab U|^2>C_0,$$
where $C_0$ is independent of $k$. Therefore $\j = \nabla U$ violates \eqref{jcvun}. On the other hand, because the strength of each charge in the sum defining $\mu$ is negligible compared to the distance from the next charge, it is possible to approximate $\mu$ by a measure of the type $\nu - \dr$, where $\nu = \sum_{p\in\Lambda} \delta_p$. Letting $\j = 2\pi \nabla\Delta^{-1} (\nu-\dr)$ would then yield a counter-example.
\end{remark}
}

We have not been able to show that screening is always possible without assuming \eqref{jcvun}. However we will see in  Lemma~\ref{GH} that this assumption is satisfied ``generically" i.e. for a large set of vector-fields in the support of any invariant probability measure, and this will suffice for our purposes.

 \subsection{Abstract preliminaries}
We repeat here the definitions of distances that we used in \cite{ma2d}.
First we choose distances which metrize the topologies of $\Lp$ and $\B (X)$, the set of finite Borel measures on $X=\E\times\Lp$. For $\j_1,\j_2\in\Lp$  we let
$$d_p(\j_1,\j_2) = \sum_{k=1}^\infty 2^{-k} \frac{\|\j_1 - \j_2\|_{L^p(B(0,k)) }}{1+\|\j_1 - \j_2\|_{L^p(B(0,k))}}, $$
and on $X$ we use the product of the Euclidean distance on $\E$ and $d_p$, which we denote $d_X$. On $\B(X)$ we define a distance by choosing a sequence of bounded continuous functions $\{\vp_k\}_k$ which is dense in $C_b(X)$ and we let,  for any $\mu_1,\mu_2\in\B(X)$,
$$ d_{\B}(\mu_1,\mu_2) = \sum_{k=1}^\infty 2^{-k} \frac{|\langle \vp_k,\mu_1-\mu_2\rangle|}{1+|\langle \vp_k,\mu_1-\mu_2\rangle|},$$
where we have used the notation $\langle\vp,\mu\rangle = \int\vp\,d\mu$.

We will use  the following general  facts, whose proofs are in \cite[Sec. 7.1]{ma2d}.

\begin{lem}\label{etavar} For any $\ep>0$ there exists $\eta_0>0$ such that if $P,Q\in\B(X)$ and $\|P-Q\|<\eta_0$, then $d(P,Q)<\ep$. Here $\|P-Q\|$ denotes the total variation of the signed measure $P-Q$, i.e. the supremum of $\langle\vp,P-Q\rangle$ over measurable functions $\vp$ such that $|\vp|\le 1$.
\end{lem}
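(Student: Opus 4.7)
The plan is to exploit the definition of $d_{\B}$ as a weighted sum, splitting it into a tail (controlled geometrically) and a finite head (controlled by total variation). Given $\ep>0$, I would first choose $N$ large enough that $\sum_{k>N} 2^{-k}<\ep/2$; since each summand in $d_{\B}(P,Q)$ is of the form $2^{-k}\frac{|\langle\vp_k,P-Q\rangle|}{1+|\langle\vp_k,P-Q\rangle|}\le 2^{-k}$, this handles the tail uniformly in $P,Q$.

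For the remaining finitely many terms $k\le N$, I would use the elementary bound $\frac{t}{1+t}\le t$ for $t\ge 0$ together with the fact that each $\vp_k\in C_b(X)$ is bounded, say by $M_k=\|\vp_k\|_\infty$. Setting $M=\max_{1\le k\le N} M_k$, the duality formulation of total variation gives
\[
\sum_{k=1}^N 2^{-k}\,\frac{|\langle\vp_k,P-Q\rangle|}{1+|\langle\vp_k,P-Q\rangle|}\le \sum_{k=1}^N 2^{-k}|\langle\vp_k,P-Q\rangle|\le M\,\|P-Q\|.
\]
It then suffices to pick $\eta_0=\ep/(2M)$ so that $\|P-Q\|<\eta_0$ forces the head to be less than $\ep/2$, yielding $d_{\B}(P,Q)<\ep$ as required.

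The proof is essentially a soft argument, and there is no real obstacle: the key ingredients are the triviality $\tfrac{t}{1+t}\le\min(t,1)$ and the fact that, although the full family $\{\vp_k\}$ is only required to be dense in $C_b(X)$ (hence not uniformly bounded), any finite subfamily is trivially uniformly bounded. I would keep the write-up to a few lines.
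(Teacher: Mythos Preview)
Your argument is correct: the tail--head split together with $\tfrac{t}{1+t}\le\min(t,1)$ and the bound $|\langle\vp_k,P-Q\rangle|\le\|\vp_k\|_\infty\|P-Q\|$ is exactly the standard way to prove this. The paper does not actually give a proof of this lemma here but refers to \cite[Sec.~7.1]{ma2d}, so there is no in-paper argument to compare against; your write-up is the expected one.
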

In particular, if $P = \sum_{i=1}^\infty \alpha_i\delta_{x_i}$ and  $Q = \sum_{i=1}^\infty \beta_i\delta_{x_i}$ with $\sum_i|\alpha_i - \beta_i| <\eta_0$, then  $d_{\B}(P,Q)<\ep$.

\begin{lem}\label{etadir} Let $K\subset X$ be compact. For any $\ep>0$ there exists $\eta_1>0$ such that if $x\in K, y\in X$ and $d_X(x,y)<\eta_1$ then $d_{\B}(\delta_x, \delta_y)<\ep$.
\end{lem}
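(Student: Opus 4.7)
The plan is to use the fact that the metric $d_{\mathcal{B}}$ on $\mathcal{B}(X)$ is built from a countable dense family $\{\varphi_k\}_k \subset C_b(X)$ via a weighted sum that can be truncated, after which only finitely many of the $\varphi_k$ need to be controlled, and each of them is uniformly continuous near the compact set $K$.

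\medskip

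\emph{Step 1 — Truncation of the series.} Given $\ep>0$, first choose $N = N(\ep)$ large enough that $\sum_{k>N} 2^{-k} < \ep/2$. Since for any $a,b \in \mr$ the quantity $|a-b|/(1+|a-b|)$ is bounded by $1$, the tail of the defining sum for $d_{\mathcal{B}}(\delta_x, \delta_y)$ satisfies
\begin{equation*}
\sum_{k>N} 2^{-k} \frac{|\varphi_k(x) - \varphi_k(y)|}{1 + |\varphi_k(x) - \varphi_k(y)|} \;\le\; \sum_{k>N} 2^{-k} \;<\; \ep/2,
\end{equation*}
regardless of $x,y$. Hence it suffices to control the first $N$ terms.

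\medskip

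\emph{Step 2 — Uniform continuity of each $\varphi_k$ near $K$.} For each $k \in \{1,\ldots,N\}$ I claim that there exists $\eta_1^{(k)} > 0$ such that whenever $x \in K$, $y \in X$ and $d_X(x,y) < \eta_1^{(k)}$, one has $|\varphi_k(x)-\varphi_k(y)| < \ep/2$. Indeed, by continuity of $\varphi_k$ on $X$, for each $x \in K$ there is a radius $r_x > 0$ such that $d_X(x,y)<2 r_x$ implies $|\varphi_k(x)-\varphi_k(y)| < \ep/4$. The balls $B(x, r_x)$ cover $K$; by compactness extract a finite subcover $B(x_1, r_{x_1}), \dots, B(x_m, r_{x_m})$, and set $\eta_1^{(k)} = \min_{i \le m} r_{x_i}$. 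For any $x \in K$, say $x \in B(x_i,r_{x_i})$, and any $y \in X$ with $d_X(x,y) < \eta_1^{(k)}$, the triangle inequality gives $d_X(x_i,y) < 2 r_{x_i}$, so both $|\varphi_k(x_i)-\varphi_k(x)|$ and $|\varphi_k(x_i)-\varphi_k(y)|$ are at most $\ep/4$, whence $|\varphi_k(x)-\varphi_k(y)| < \ep/2$ as claimed.

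\medskip

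\emph{Step 3 — Conclusion.} Set $\eta_1 := \min_{1\le k \le N} \eta_1^{(k)} > 0$. For $x \in K$ and $y \in X$ with $d_X(x,y) < \eta_1$, Step 2 gives $|\varphi_k(x)-\varphi_k(y)| < \ep/2$ for every $k \le N$, and therefore
\begin{equation*}
\sum_{k=1}^{N} 2^{-k}\frac{|\varphi_k(x) - \varphi_k(y)|}{1 + |\varphi_k(x) - \varphi_k(y)|} \;\le\; \frac{\ep}{2}\sum_{k=1}^{N} 2^{-k} \;<\; \frac{\ep}{2}.
\end{equation*}
Combining with Step 1 yields $d_{\mathcal{B}}(\delta_x,\delta_y) < \ep$, as required.

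\medskip

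There is no real obstacle here; the only point worth being careful about is that one needs uniform continuity in the asymmetric sense ($x \in K$, $y \in X$ arbitrary), which is handled by the finite-cover argument in Step~2 and is the reason compactness of $K$ (rather than just continuity of each $\varphi_k$) enters.
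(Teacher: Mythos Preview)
Your proof is correct. The paper itself does not supply a proof of this lemma, instead referring the reader to \cite[Sec.~7.1]{ma2d}; your argument---truncating the series defining $d_{\B}$ and then using compactness of $K$ to get uniform continuity of the finitely many remaining $\varphi_k$ near $K$---is exactly the standard and expected one.
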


\begin{lem}\label{etaconv} Let $0<\ep<1$.  If  $\mu$ is a probability measure on a set $A$ and $f,g:A\to X$ are measurable and such that $d_{\B}(\delta_{f(x)}, \delta_{g(x)})<\ep$ for every $x\in A$, then
$$d_{\B}(f\#\mu,g\#\mu) <C \ep(\lep+1)$$ where $\#$ denotes the push-forward of a measure.
\end{lem}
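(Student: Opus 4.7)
The plan is to unpack the definition of $d_{\B}$ as the weighted series $\sum_{k\ge 1}2^{-k}\frac{|\langle\vp_k,\cdot\rangle|}{1+|\langle\vp_k,\cdot\rangle|}$ and to convert the pointwise hypothesis into a uniform bound on the first $O(\lep)$ terms, treating the tail by a crude geometric estimate. The key observation is that the function $u\mapsto u/(1+u)$ is increasing, so the hypothesis $d_{\B}(\delta_{f(x)},\delta_{g(x)})<\ep$ forces each individual term to be at most $\ep$: explicitly, for every $k\ge 1$ and every $x\in A$,
$$2^{-k}\,\frac{|\vp_k(f(x))-\vp_k(g(x))|}{1+|\vp_k(f(x))-\vp_k(g(x))|}<\ep.$$
Solving the inequality, one obtains $|\vp_k(f(x))-\vp_k(g(x))|<\frac{2^k\ep}{1-2^k\ep}$ whenever $2^k\ep<1$, and in particular $|\vp_k\circ f-\vp_k\circ g|\le 2^{k+1}\ep$ pointwise on $A$ as long as $k\le K:=\lfloor\log_2(1/(2\ep))\rfloor$, which is of order $\lep$.

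Next I would fix $K$ as above and split the defining series for $d_{\B}(f\#\mu,g\#\mu)$ into the low-frequency part $k\le K$ and the tail $k>K$. For the low-frequency part, I use the identity $\langle\vp_k,f\#\mu-g\#\mu\rangle=\int(\vp_k\circ f-\vp_k\circ g)\,d\mu$ together with the pointwise bound from the previous step and the elementary inequality $u/(1+u)\le u$, yielding
$$\sum_{k=1}^{K}2^{-k}\,\frac{|\langle\vp_k,f\#\mu-g\#\mu\rangle|}{1+|\langle\vp_k,f\#\mu-g\#\mu\rangle|}\le \sum_{k=1}^K 2^{-k}\cdot 2^{k+1}\ep=2K\ep\le C\ep(\lep+1).$$
For the tail $k>K$, I use the trivial bound $u/(1+u)\le 1$ to get a geometric sum
$$\sum_{k>K}2^{-k}\,\frac{|\langle\vp_k,f\#\mu-g\#\mu\rangle|}{1+|\langle\vp_k,f\#\mu-g\#\mu\rangle|}\le\sum_{k>K}2^{-k}=2^{-K}\le 4\ep.$$
Summing the two contributions yields $d_{\B}(f\#\mu,g\#\mu)\le C\ep(\lep+1)$, as claimed.

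There is no substantial obstacle here: the only subtlety is the choice of the cutoff $K\sim\lep$, which is dictated by the requirement that the pointwise estimate on $|\vp_k\circ f-\vp_k\circ g|$ coming from inverting $u\mapsto u/(1+u)$ remain meaningful (i.e.\ $2^k\ep$ stays bounded away from $1$). Everything else is bookkeeping and the dominated (here, uniform) use of the push-forward identity $\langle\vp_k,f\#\mu\rangle=\int\vp_k\circ f\,d\mu$, which is valid since each $\vp_k$ is bounded and $\mu$ is a probability measure.
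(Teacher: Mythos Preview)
Your proof is correct. The paper does not give its own proof of this lemma but simply cites \cite[Sec.~7.1]{ma2d}, so there is nothing in the present paper to compare against; your argument---splitting the defining series for $d_{\B}$ at a cutoff $K\sim\lep$, using the pointwise bound $|\vp_k\circ f-\vp_k\circ g|\le 2^{k+1}\ep$ obtained by inverting $u\mapsto u/(1+u)$ on the low-frequency part, and bounding the tail geometrically---is the natural one and is carried out cleanly.
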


The next lemma shows how, given a translation-invariant probability measure $\tilde{P}$  on $\Lp$, one can  select a good subset $G_\ep$ and  vector fields $J_i$ of  $\Lp$ to approximate it. It is essentially borrowed from \cite{ma2d} except it contains in addition the argument that ensures that we may choose $G_\ep$ to satisfy  the assumption \eqref{jcvun}  needed for the screening.
\begin{lem}\label{GH}
Let $\tilde{P}$ be a translation invariant measure on $X$ such that, $\tilde{P}$-a.e., $\j$ is in $\mathcal A_1$ and satifies
 $W(\j)<+\infty$.
 %$\int W(\j) \, d\tilde{P}(x,\j)<+\infty$\footnote{a voir}.
Then, for any $\ep>0$ there exists a compact $G_\ep\subset\Lp$ such that
\begin{itemize}
\item[i)] Letting $0<\eta_0$ be as in Lemma~\ref{etavar} we have
\begin{equation}\label{pgep} \tP(\E\times {G_\ep}^c) <\min({\eta_0}^2,\eta_0 \ep).\end{equation}
\item[ii)] The convergence \eqref{Wroi} is uniform with respect to $\j\in G_\ep$.
\item[iii)] Writing $\div \j = 2\pi(\nu_\j-\dr)$, both $W(\j)$ and $\nu_\j(I_R)/R$ are bounded uniformly with respect to $\j\in G_\ep$ and $R>1$.
\item[iv)] Uniformly with respect to $\j\in G_\ep$ we have
\begin{equation}\label{cvjuniform}\lim_{y_0 \to +\infty} \lim_{R \to +\infty} \dashint_{I_R}\int_{|y|>y_0} |\j|^2 =0\end{equation}
\end{itemize}
Moreover, \eqref{pgep} implies that for any $R>1$ there exists a compact subset $H_\ep\subset G_\ep$ such that
\begin{itemize}
\item[v)] For every $\j\in H_\ep$, there exists $\Lj\subset I_{\wb R}$ such that
\begin{equation}\label{gamj}\text{$|\Lj|< C {R} \eta_0$ and  $\lambda\notin \Lj\implies \theta_\lambda \j\in G_\ep.$}\end{equation}
\item[vi)] We have
 \begin{multline}\label{ppseconde}  d_{\B}(\bar{P},P') < C \ep(\lep+1),\quad\text{where}  \\
P' = \int_{\E\times H_\ep}\frac1{\bm(x)|I_{R}|}  \int_{{\bm(x)}I_{R}\sm\Lj} \delta_x\otimes\delta_{\sigma_{\bm(x)}\theta_\lambda \j}\,d\lambda\,d\tP(x,\j)\\
\qquad\qquad\bar{P} =  \int_{\E\times  \Lp} \delta_x \otimes \delta_{ \sigma_{\bm(x)}\j} \, d\tilde{P} (x,\j)
.\hfill \end{multline}
\item[vii)] $$ \tP( \E \times {H_\ep}^c) < \min (\eta_0, \ep).$$
%5cm\qquad\qquad\qquad\qquad
\end{itemize}
Finally, there exists a partition of $H_\ep$ into $\cup_{i=1}^{N_\ep} H_\ep^i$  satisfying $\diam (H_\ep^i) <\eta_3$, where $\eta_3$ is such that
\begin{equation}\label{eta5}
\j \in H_\ep, \ d_p(\j,\j') <\eta_3 , \
m \in (0, \wb], \ \lambda \in  {\wb} I_{R}\backslash \Lj
    \implies
d_{\B} (\delta_{\sigma_m  \theta_\lambda \j}, \delta_{\sigma_m \theta_\lambda \j'}) <\ep;\end{equation}
 and there exists  for all $i$,  $\j_i \in H_\ep^i$ such that
 \begin{equation}\label{wjk} W(\j_i) < \inf_{H_\ep^i} W +\ep.\end{equation}
\end{lem}
\begin{proof}
The lemma is almost identical to  Lemma~7.6 in \cite{ma2d}, except for item iv). The proof in \cite{ma2d} is as follows: First one proves that there exists $G_\ep$ satisfying items ii) and iii) with $\tilde P(\E\times{G_\ep}^c)$ arbitrarily small, in particular one can choose it so that \eqref{pgep} is satisfied. Then one deduces from \eqref{pgep} the existence, for any $R>1$, of a compact subset $H_\ep\subset G_\ep$ satisfying the remaining properties.
The only difference here is that we must check that there exists $G_\ep$ with $\tilde P(\E\times{G_\ep}^c)$ arbitrarily small satisfying not only items ii) and iii), but iv) as well.   Then, the proof of the existence $H_\ep\subset G_\ep$ satisfying the remaining properties is exactly as in \cite{ma2d}.

Of course, by intersecting sets, it is equivalent to prove that ii), iii), and iv) can be satisfied {\em simultaneously} or {\em separately}, on a set of measure arbitrarily close to full. The proof in \cite{ma2d} shows that this is possible for ii) or iii), it remains to check it for iv). For this we consider $G_n = \{\j\mid W(\j)<n\}$. Then $G_n$ is a translation-invariant set since $W$ is a translation-invariant function, and therefore
by the multiparameter ergodic theorem (as  in \cite{becker}), and since $\tilde{P}$ is translation-invariant, we have
\begin{equation}\label{luria}\int_{\E\times G_n}\( \int_{[-1,1]\times \{|y|>y_0\}}  |\j|^2  \) \, d\tilde{P}(x,\j)  =  \int_{\E\times G_n} \( \lim_{R\to \infty}  \frac{1}{R}\int_{\mr \times \{|y|>y_0\}}\chi_R |\j|^2 \) \, d\tilde{P}(x,\j),\end{equation}
where $\chi_R = \indic_{I_R}\ast \indic_{[-1,1]}$. Then, using Lemma~\ref{wtog} and using the fact that the $g$ there was defined in Proposition~\ref{wspread} hence is equal to $\hal|\j|^2$ on $\mr\times\{|y|>1\}$ we deduce from \eqref{chirg} and the fact that $g$ is bounded below by a constant independent of $\j$ that
$$\lim_{R\to \infty}  \frac{1}{R}\int_{\mr \times \{|y|>y_0\}}\chi_R |\j|^2\le C (1+W(\j)) \le C n$$
holds for every $\j\in G_n$ with $n\ge 1$.

It follows that for every fixed $n\ge 1$ the family of functions
$$\left\{\vp_{y_0}: (x,\j)\mapsto \int_{[-1,1]\times \{|y|>y_0\}}  |\j|^2\right\}_{y_0>1}$$
decreases to $0$ on $\E\times G_n$ as $y_0 \to +\infty$, and is dominated by the bounded, hence $\tilde P$-integrable, function $\vp_1$. Lebesgue's theorem then implies that their integrals on $\E\times G_n$ converge to $0$, hence in view of \eqref{luria} that
$$\int_{\E\times G_n} \( \lim_{R\to \infty}  \frac{1}{R}\int_{\mr \times \{|y|>y_0\}}\chi_R |\j|^2 \) \, d\tilde{P}(x,\j),$$
tends to $0$ as $y_0\to +\infty$. Fatou's lemma then implies that \eqref{cvjuniform} holds for $\tilde P$-almost every $(x,\j)\in \E\times G_n$.

Since $W(\j)<+\infty$ holds for $\tilde P$-a.e. $(x,\j)$, we know that $\tilde P(\E\times G_n)\to 1$ as $n\to +\infty$ therefore the measure of $\E\times G_n$ can be made arbitrarily close to $1$, and then Egoroff's theorem implies that by restricting $G_n$ we can in addition require the  convergence in \eqref{cvjuniform} to be uniform.
\end{proof}

\subsection{Construction}\label{sec4}

In what follows $\E' = n \E$, $\bm'(x) = \bm(x/n)$: we work in blown-up coordinates. In view of assumption \eqref{assumpV2},  we may assume without loss of generality that $\E$ is made of one closed interval $[a, b ]$    (it is then immediate to generalize the construction to the case of a finite union of intervals). In that case $\E'=[na,nb ]$. Let $\bw>0$ be a small parameter. For any integer $n$ we choose real numbers  $a_n$ and $b_n$ (depending on $\bw$) as follows:
Let $a_n$ be the smallest number   and $b_n $ the largest  such that
\begin{align}
\label{q0}
&a_n \ge na  + \frac{n\bw}{\gamma^2}\qquad b_n \le  nb - \frac{n\bw}{\gamma^2}\\
\label{q1}
& \int_{na}^{a_n} \bm'(x)\, dx\in \mn \\
\label{q2}
& \int_{b_n}^{nb} \bm'(x)\, dx\in \mn\\
\label{q3}
& \int_{a_n}^{b_n} \bm'(x)\, dx \in q_\ep \mn\end{align}
 where $q_\ep$ is an  integer, to be chosen later, and $\gamma$ is the constant in \eqref{assumpV3}.
 By \eqref{q0} and assumption  \eqref{assumpV2}, we are sure to have   $\bm'\ge  \bw$ in $\E'_{\bw}:=[a_n, b_n]$.
This fact  also ensures that
 \begin{equation}\label{taille}
|a_n-na|\le  \frac{n\bw}{\gamma^2} + \frac{1}{\bw}\qquad  |b_n-nb|\le  \frac{n\bw}{\gamma^2} + \frac{q_\ep}{\bw}.\end{equation}
We also denote $\E_{\wb} :=\frac{1}{n}\E'_{\wb}$.

Let  $P$  be a probability on $\E\times\Lp$ which  is as in the statement of  Theorem~\ref{thuppb}.
Our goal is to construct a vector field $\j_n$ whose $W$ energy is close  to $\int W\, dP$ and such that the associated $P_n$ (defined as the push-forward of the normalized Lebesgue measure on $\E$ by  $x \mapsto \(x, \j( nx + \cdot)\)$) well approximates $P$.

In $[na, a_n]$ and $[b_n, nb]$,
 we approximate $\bm'(x)\,dx$ by  a sum of Dirac masses at points appropriately spaced, %will just  place points regularly (according to $\bm'$),
and build an associated $\j_n$, whose contribution to the energy will shown to be negligible as $\wb\to 0$. %be small (like $\wb$)  because these intervals are.
We leave this part for the end.

For now we  turn to $[a_n, b_n]$, where  we will do a more sophisticated construction, approaching $P$ via Lemma \ref{GH} and  using Proposition \ref{tronque}.
The idea of the construction is to split the interval $[a_n, b_n]$ into intervals of width  $\sim q_\ep R_\ep$, where $q_\ep$ is an integer  and $R_\ep$ a number, both  chosen large enough,  and then paste in each of these intervals a large number of copies of the (rescaled) truncations of the $J_i$'s provided by Proposition \ref{tronque}, in a proportion following that of $P$.

 \medskip

\noindent
{\it -Step 1: Reduction to a density bounded below}.
We have
$$ P = \int \delta_x\otimes\delta_{\sigma_{\bm(x)}\j}\,dQ(x,\j),\quad\text{where}\quad Q = \int \delta_x\otimes\delta_{\sigma_{1/{\bm(x)}}\j}\,dP(x,\j).$$
Moreover, since the first marginal of $P$ is the normalized Lebesgue measure on $\E$ and since $|\E_\bw|\simeq |\E|$ as $\bw\to 0$, we have
\begin{equation}\label{proba1}
\lim_{\bw\to 0} d_{\B} (P, \bar{P})= 0,\end{equation}
where $\bar P$ is defined by
\begin{equation}\label{proba} \bar{P} = \int \delta_x\otimes\delta_{\sigma_{ \bm(x)} \j}\,d\tP(x,\j),\quad\text{with}\quad
\tP = \int_{\E_{\bw} \times \Lp}  \delta_x\otimes\delta_{\sigma_{1/{\bm(x)}} \j}\,dP(x,\j).
\end{equation}
Clearly $\tP$  is is $T_{\lambda(x)}$-invariant since $P$ is, and in particular it is translation-invariant. In addition, for $\tilde{P}$-a.e. $(x, \j)$, we have $\bm(x)\in[\bw, \wb]$,  a situation similar to \cite{ma2d} where the density was assumed to be bounded below.\medskip

\noindent
{\it -Step 2: Choice of the parameters}.
Let $0<\ep<1$. We define the compact set $G_\ep\subset\Lp$ to be given by  Lemma \ref{GH}. Then, from Proposition~\ref{tronque} applied to $G_\ep$, there exists $R_0>0$  such that for any
integer  $R>R_0$, and any $\j\in G_\ep$, there exists a truncation (in the sense of items i), ii), iii) of Proposition~\ref{tronque}) $\j_R$ satisfying \eqref{restronque}. Applying Lemma~\ref{etadir} on the compact set $\{\sigma_m \j: m\in [\bw,\wb], \j\in G_\ep\}$,  there exists $\eta_1>0$ such that
\begin{equation}\label{eta1}
\text{$m\in  [\bw,\wb]$,  $\j\in G_\ep,\j'\in\Lp$ and $d_p(\j,\j')<\eta_1$} \implies d_{\B}(\delta_{\sigma_m \j},\delta_{\sigma_m \j'})<\ep.\end{equation}

Then we define $R_\ep$ to be such that  $\bw R_\ep > R_0$ and such that for any  $\j,\j'\in\Lp$,
\begin{equation}\label{proxitronque}  \text{$\j = \j'$ on $\bar I_{\bw\ep R_\ep}$}\quad\Longrightarrow\quad d_p(\j,\j')<\eta_1.\end{equation}

Going back to Lemma~\ref{GH}, we deduce the existence of $H_\ep\subset G_\ep$, of $N_\ep\in\mn$ and  of $\{\j_i\}_{1\le i\le N_\ep}$ satisfying \eqref{gamj}, \eqref{ppseconde}, \eqref{eta5} and \eqref{wjk}, with $R$ replaced by $R_\ep$.

Finally, we choose $q_\ep\in\mn$ sufficiently large so that
\begin{equation}\label{qep} \frac {N_\ep}{{q_\ep}} < \eta_0,\qquad \frac{N_\ep}{{q_\ep}^2}\times \max_{\substack{0\le i\le N_\ep\\ m\in[\bw,\wb]}} W(\sigma_m \j_i)< \ep.\end{equation}
\medskip

\noindent
{\it - Step 3:  construction in $[a_n,b_n]$.}  We start by splitting this interval into subintervals with integer ``charge". This is done
by induction by letting $t_0=a_n$ and,  $t_k$ being given, letting $t_{k+1}$ be the smallest $t\ge t_k +q_\ep  R_\ep$ such that $\int_{t_{k-1}}^{t_k} \bm'(x)\, dx \in q_\ep\mn$.  By \eqref{q3} there exists $K\in \mn$ such that $t_K=b_n$, and
\begin{equation}\label{K}K \le \frac{b_n - a_n}{q_\ep R_\ep}\le \frac{n(b-a)}{q_\ep R_\ep}.\end{equation}
Since $\bm'\ge \bw$ in $[a_n,b_n]$, it is clear that $t_k- (q_\ep R_\ep +t_{k-1}) \le q_\ep \bw^{-1}$.
To summarize and letting $I_k= [t_{k-1}, t_k]$, we thus have   \begin{equation}\label{intik}|I_k| \in [q_\ep R_\ep, q_\ep(R_\ep + \bw^{-1})],\qquad
 \int_{I_k} \bm'(x)\, dx\in q_\ep \mn.\end{equation}
In each $I_k$ we will paste $n_{i,k}$ copies of a rescaled version of $\j_i$, where
$$n_{i,k} = \left[  \frac{q_\ep (b_n-a_n)}{|I_k|} p_{i,k}\right] \qquad  p_{i,k}= \tilde{P}\( \frac{1}{n}I_k \times H_\ep^i\),$$
$[\cdot] $ denoting the integer part of a number. Because the first marginal of $\tP$ is the normalized Lebesgue measure on $\E_\bw$ and since $[a_n,b_n]\subset \E_\bw \subset [a,b]$, and $\cup_k I_k = [a_n,b_n]$, we have that
$$\frac{|I_k|}{n(b-a)} \le \sum_{i=1}^{N_\ep} p_{i,k}\le \frac{|I_k|}{b_n-a_n},$$
and therefore $\sum_{i=1}^{N_\ep} n_{i,k} \le q_\ep.$ Also, using in particular \eqref{qep},
$$\sum_{i,k} \left| \frac{|I_k|}{q_\ep (b_n-a_n)} n_{i,k} - p_{i,k}\right| \le \frac{N_\ep}{q_\ep} < \eta_0.$$

We divide $I_k $ into $q_\ep$ subintervals with disjoint interiors, all having the same width $\in [R_\ep, R_\ep +\bw^{-1}]$. Then  for each $1\le i\le N_\ep$ we let $\mathcal{I}_{i,k}$  denote a family consisting of $n_{i,k}$ of these intervals. This doesn't necessarily exhaust $I_k$ since $ \sum_{i=1}^{N_\ep}n_{i,k} \le q_\ep $ so we let $n_{0,k}= q_\ep- \sum_{i=1}^{N_\ep} n_{i,k}$.

We define $m_k$ to be the average of $\bm'$ over $I_k$. From \eqref{intik} we have $m_k |I_k|\in q_\ep \mn$ hence for each $I\in \mathcal I_{i,k}$ we have $R :=|m_k I|\in\mn$, and  $R \in [m_k R_\ep, m_k (R_\ep + \bw^{-1})]$. We  then apply Proposition \ref{tronque} in $I_R$ to the vector field $\j_i$,  which yields a ``truncated"  vector field $\j_{i,I}$ defined in $I_R$, where $R = |m_k I|$. If $I\in \mathcal{I}_{0,k}$  we apply the same procedure  with an arbitrary current $\j_0\in \mathcal{A}_1$ fixed with respect to all the parameters of the construction.

We then set $$\j_n^{(1)}(x)=\sigma_{1/m_k}  \j_{i,I} (x_I+ \cdot)$$ on each interval $I\in \mathcal{I}_{i,k}$,  where $x_I$ is the center of $I$.
The next step is to rectify the weight in  $\j_n^{(1)}$. For this we let  $\mathcal R_k$ be the square $I_k\times(-|I_k|/2, |I_k|/2)$ and let $H_k$ be the solution to
$$\left\{\begin{aligned}
-\Delta H_k&= 2\pi(\bm' - m_k)  && \text{in} \mathcal R_k\\
\frac{\p H_k}{\p \nu }&= 0 && \text{on} \ \p \mathcal R_k.
\end{aligned}
\right.$$
From Lemma \ref{lem48} applied with  $\vp$ and $m_0$ equal to zero,  and using the fact that $\bm$ is assumed to belong to $ C^{0\hal}$, we have for any $q \in[1,4]$,
 \begin{equation}\label{banh}
\int_{\mathcal{R}_k}  |\nab H_k|^q \le C_{q} |I_k|^2    \|\bm'-m_k\|_{L^\infty(I_k)}^q\le  C_q  |I_k|^2  \|\bm\|_{C^0,\hal}^q n^{-\frac q2}. \end{equation}
We then define
\begin{align*}
& \j_n^{(2)}=   \begin{cases} & \nab H_k \quad \text{ in} \  \mathcal{R}_k\\  & 0 \qquad \text{in} \ \bar{I}_k  \backslash \mathcal{R}_k\end{cases}\\ &  \j_n=\j_n^{(1)} + \j_n^{(2)}\  \text{ in} \  \bar{I_k}.\end{align*}
Using Lemma \ref{lemsum} and \eqref{banh}  we deduce using \eqref{intik}  that
\begin{equation}\label{nrj1}
W(\j_n, \indic_{\bar{I_k}}) \le W(\j_n^{(1)}, \indic_{\bar I_k}) +o_n(1), \quad \text{as} \ n\to \infty,\end{equation}
where  $o_n(1)$ tends to zero as $n\to\infty$ and depends on  $\ep, \bw>0$ but not the interval $I_k$ we are considering.
 Summing \eqref{banh} for $1\le k\le K$ and in view of \eqref{K} we find that for any $q\in [1,4]$
\begin{equation}\label{jn12}
\int_{[a_n,b_n]\times\mr} |\j_n^{(1)}-\j_n|^q \le C_{q, \ep, \bw} n^{1-\frac q2}.\end{equation}

On the other hand, in view of the construction and the result of Proposition \ref{tronque} we have
\begin{equation}\label{nrj2}W(\j_n^{(1)},\indic_{\bar{I}_k} ) \le |I_k|\( \sum_{i=0}^{N_\ep} \frac{n_{i,K}}{{q_\ep}} W(\sigma_{m_K} J_i) +C \ep\).\end{equation}

Then, following the exact same arguments as in \cite[Sec. 7]{ma2d} which we do not reproduce here (the only difference is that the rescaling factors $\sqrt{n}$ there should be replaced by $n$), thanks to \eqref{proxitronque}--\eqref{eta1}--\eqref{qep} we find that we can choose $C_1$ in \eqref{qep} such that
\begin{equation}\label{distp0}
d_\B( \bar{P}, P') <C \ep( \lep +1)
\end{equation}
where $$P'= \frac{1}{|\E'_{\bw}|} \sum_{k =1}^K\int_{I_k} \delta_{x_k}\otimes  \delta_{\theta_{\lambda} \j_n^{(1)}}\, d\lambda$$
and stands for $P^{(6)}$ in \cite[Sec. 7]{ma2d}.
Also, and again as in \cite{ma2d}, since \eqref{jn12} holds, and from Lemma~\ref{etadir}, we may replace $\j_n^{(1)} $ with $\j_n$ at a negligible cost, more precisely for any large enough $n$ we have
\begin{equation}\label{distp}
d_\B( \bar{P}, P'') <C \ep( \lep +1)\end{equation}
where
$$P''=  \frac{1}{|\E'_{\bw}|} \sum_{k =1}^K\int_{I_k} \delta_{x_k}\otimes  \delta_{\theta_{\lambda} \j_n}\, d\lambda.$$

\noindent
{\it - Step 3: construction in $[b_n, nb]$}. The construction in $[na,a_n]$ is exactly the same hence will be omitted.
We claim that there exists $\j_n$  defined in $[b_n,nb] \times \mr$ such that
\begin{equation}\label{cur}
\left\{\begin{aligned}
\div \j_n &= 2\pi (\sum_i\delta_{x_i}-\bm' \delta_\mr ) &\quad&\text{in   $[b_n,nb] \times \mr$}\\
\j_n\cdot \vec{\nu} &=0  &\quad&\text{on $\partial  ([b_n,nb]\times \mr)$}
\end{aligned}\right.
\end{equation}
and
\begin{equation}\label{step3}W(\j_n, \indic_{ [b_n, nb]\times \mr}) \le C n \(\bw +o_n(1)\),\end{equation} where $C$ may depend on $\gamma, \wb$ and $\ep$. To prove this claim,  let $s_0=b_n$ and for every $l\ge 1$, let $s_l$ be the smallest $s\ge s_{l-1}$ such that $\int_{s_{l-1}}^{s_l} \bm'(x)\, dx=1$. Since \eqref{q2} holds,  this terminates at some $s_L= nb$ with $L= \int_{b_n}^{nb}\bm'\le \wb |nb-b_n|$.
We then set $x_l$ to be the middle of $[s_{l-1}, s_l]$. We let $u_l$ be the solution in the square $\mathcal{R}_l:=[s_{l-1}, s_{l}] \times [-\hal (s_l- s_{l-1}), \hal (s_l-s_{l-1})]$
$$
\left\{\begin{aligned}
 -\Delta u_l &= 2\pi(\delta_{x_l} - \bm'\delta_\mr) && \text{in } \ \mathcal{R}_l \\
  \frac{\p u_l}{\p \nu} &=0 &&\text{on} \ \p \mathcal{R}_l.\end{aligned}
\right.
$$
This equation is solvable since, by  construction of the $s_l$'s, the right-hand side has zero integral.  Then for each $l$ we let $\j_n=-\nab u_l$ in $\mathcal{R}_l$, and let $\j_n = 0$ in $[b_n,nb]\times \mr\setminus\cup_l \mathcal R_l$.
Clearly  $\j_n$ satisfies \eqref{cur}.

To estimate the energy of $u_l$ we let $u_l = v_l+w_l$ where, letting $m_l= \dashint_{[s_{l-1},s_l] } \bm'$,
$$\left\{\begin{aligned}
 -\Delta v_l &= 2\pi(\delta_{x_i} - m_l\delta_\mr) && \text{in } \ \mathcal{R}_l \\
  \frac{\p v_l}{\p \nu}&=0 &&\text{on} \ \p \mathcal{R}_l,\end{aligned}\right.$$
$$\left\{\begin{aligned}
 -\Delta w_l &= 2\pi(m_l-\bm')\delta_\mr && \text{in } \ \mathcal{R}_l \\
  \frac{\p w_l}{\p \nu}&=0 &&\text{on} \ \p \mathcal{R}_l,\end{aligned}\right.$$
From Lemma~\ref{lem48} and Lemma~\ref{lemrect2} we find, choosing for instance $q=4$ so that $q\in[1,4]$ and $q'<2$,
$$\int_{\mathcal{R}_l} |\nab w_l|^q \le C  (s_l - s_{l-1})^2 \left\|m_l-\bm'\right\|^q_{L^\infty( [s_{l-1}, s_{l}])  },
$$ and $$
W(v_l,\indic_{\mathcal R_l}) = C - \pi\log m_l,\quad \int_{\mathcal R_l} |\nab v_l|^{q'}\le C {m_l}^{q'-2}.$$
From \eqref{assumpV3} and  Lemma~\ref{lemsum}, since $\j_n= - (\nab v_l + \nab w_l)$ in $\mathcal{R}_l$, we have
\begin{multline}\label{wjnb}W(\j_n, \indic_{\mathcal{R}_l})\le C - \pi\log m_l+ C\left\|m_l-\bm'\right\|_{L^\infty( [s_{l-1}, s_{l}])} m_l^{1-\frac2{q'}} (s_l - s_{l-1})^{\frac2q} +\\
+ C\left\|m_l-\bm'\right\|^2_{L^\infty( [s_{l-1}, s_{l}])} (s_l - s_{l-1})^2.\end{multline}
Using  \eqref{assumpV4},
$$\left\|m_l-\bm'\right\|_{L^\infty( [s_{l-1}, s_{l}])} \le C\|\bm\|_{C^{0,\hal}}\frac{(s_l - s_{l-1})^\hal}{\sqrt n}.$$
Replacing in \eqref{wjnb} and letting $q=4$ we deduce that
$$W(\j_n, \indic_{\mathcal{R}_l})\le C - \pi\log m_l+ C\(\frac{s_l - s_{l-1}}{\sqrt n}+ \frac{(s_l - s_{l-1})^3}{n}\).$$
Then, summing with respect to $l$  --- using the fact that from \eqref{taille} we have $\sum_l |s_{l+1} - s_l| \le C n \wb(1+ o_n(1))$, the fact that the integral over $[s_{l-1}, s_l]$ of $\bm'$ is $1$ and that from \eqref{assumpV3} we have $(s_l - s_{l-1})\le n^{\frac13}$--- we find
$$W(\j_n,  \indic_{ [b_n, nb]\times \mr}) \le   C \(\int_{b_n}^{nb}\bm'(x)  - \bm'(x)\log\bm'(x)\,dx + n o_n(1)\) \le C n (\bw + o_n(1)),$$
since $\bm'  - \bm'\log\bm'$ is bounded by a constant depending only on $\bm$ and using \eqref{taille}. This proves \eqref{step3}

\medskip

\noindent {\it - Step 4: conclusion.}
 Once the construction of $\j_n$ is completed, the proof of Theorem~\ref{thuppb} is essentially identical to  that of  \cite[Proposition~4.1]{ma2d}, which is its 2-dimensional equivalent, except that the scaling factor $\sqrt n$ there must be replaced by $n$. We only sketch the proof below and refer to the specific part of \cite{ma2d} for the details.

The test vector-field $\j_n$ has now been defined on all $[na,nb]\times \mr$. It is extended by $0$ outside, and is easily seen to  satisfy the relation $\div \j_n= 2\pi (\nu_n'-\bm')$ for $\nu_n'= \sum_{i=1} \delta_{x'_i}$, a sum of Dirac masses on the real line.
Combining \eqref{step3} with \eqref{nrj1}, \eqref{nrj2} and \eqref{wjk}, we have
$$W(\j_n,\indic_{\mr^2} ) \le \sum_k |I_k|\( \sum_{i=0}^{N_\ep} \frac{n_{i,K}}{{q_\ep}} W(\sigma_{m_K} \j_i) +C \ep+ o_n(1)\) + C n (\bw + o_n(1)).$$
Letting $n\to \infty$ and then $\bw \to 0$, we see that  the error term on the right-hand side can be made arbitrarily small, say smaller than $C\ep$. On the other hand, the reasoning of  \cite{ma2d}, Step~2 in Paragraph~7.4, shows that
$$\sum_k |I_k|\( \sum_{i=0}^{N_\ep} \frac{n_{i,K}}{{q_\ep}} W(\sigma_{m_K} \j_i)\) \le |\E'|\int W(\j) \, dP(x,\j)+ C n\(\ep + o_n(1)\),$$
so that taking $n$ larger if necessary we obtain
\begin{equation}\label{nrjjn}
\frac{1}{|\E'| } W(\j_n, \indic_{\mr^2}) \le \int W(\j) \, dP(x,\j)+ C\ep.
\end{equation}

Then arguing as in Paragraph~7.4, Step 3 of \cite{ma2d},  letting $(x_1, \cdots , x_n)$ be the rescalings to the original scale of the points $x_i'$ i.e. $x_i = x_i'/n$, we have  for $n$ large enough
$$\limsup_{n\to \infty} \frac{1}{n}\(\w(x_1, \dots, x_n) - n^2 \I(\mu_0) +n\log n\) \le \frac{|\E|}{\pi} \int W(\j) \, dP(x,\j) + C\ep.$$

Also  letting $P_n$ be the push-forward of $\frac{1}{|\E|} dx\mid_\E $ by the map $x \mapsto (x, \j_n(nx + \cdot))$, it is easy to see that $d_\B(P'', P_n) <  C \bw$. In view of \eqref{proba1} and \eqref{distp}, and taking $\bw$ small enough, for any given $\ep>0$,  we can achieve
$$d_\B(P, P_n) < C \ep.$$

This proves that items i) and ii) of Theorem~\ref{thuppb} are satisfied by $(x_1,\dots,x_n)$ and $\j_n$. Then, the perturbation argument of Paragraph~7.4, Step~4 in \cite{ma2d} shows that there exists $\delta>0$ and for each $n$ a subset $A_n\subset\mr^n$ such that $|A_n|\ge n!(\delta/n)^n$ and such that for every  $(y_1,\dots,y_n)\in A_n$ there exists a corresponding $\j_n$ satisfying \eqref{bsw} and \eqref{convpn}. This concludes the proof of  Theorem~\ref{thuppb}.

\edd{\section{Proof of Theorems~\ref{thmini}, \ref{valz}, \ref{th4} and \ref{th3}.}}
\label{secth}
\subsection{Proof of Theorem~\ref{thmini}}

By scaling (cf. \eqref{minalai1}), we reduce to $m=1$. The result relies on the fact that   there exists a
minimizing sequence
for $\min_{\ainfty} W$  consisting of periodic vector-fields:

\begin{pro}
 \label{periodisation} There exists a sequence $\{\j_R\}_{R \in  \mn}$ in $\ainfty$ such that
each $\j_R$ is $2R$-periodic (with respect to the $x$ variable) and
   $$\limsup_{R\to \infty} W(\j_R)\le  \min_{\mathcal{A}_1} W.$$\end{pro}

\begin{proof}

The result of Proposition \ref{periodisation} is a consequence of Proposition~\ref{tronque}.

 First, applying Theorem~\ref{th2}, there exists a translation-invariant measure $P$  on $\E\times\Lp$  such that $P$-a.e. $(x,\j)$ is such that $\j$ minimizes  $W$ over $\mathcal{A}_{\bm(x)}$. Then, taking the push-forward of $P$ under $(x,\j)\mapsto\sigma_{1/\bm(x)} \j$, we obtain a probability $Q$ on $\Lp$ such that $Q$-a.e. $\j$ minimizes $W$ over $\mathcal A_1$.

Applying Lemma~\ref{GH} to $Q$, we find that $Q$-a.e. $\j$ is such that $\j \in \mathcal{A}_1$,  such that \eqref{jcvun} holds,  and such that $W(\j) = \min_{\mathcal{A}_1} W$.  Choosing  such an $\j_0$ and applying Proposition \ref{tronque} to $G=\{\j_0\}$, we find that for any given $\ep>0$ and any integer $R$ large enough depending on $\ep$, there exists  $\j_R$ defined on $I_R\times \mr$ such that  $\j_R \cdot \vec{\nu}=0$ on $\p( I_R\times \mr) $ and $ W(\j_R,\indic_{ I_R\times \mr })< R(W(\j_0)+\ep)$. This $\j_R$ can be extended periodically by letting $\j_R(x+kR,y) = \j_R(x,y)$ for any $k\in\mz$.

From the condition $\j_R \cdot \vec{\nu}=0$ on $\p( I_R\times \mr)$ we find that, letting $\Lambda\subset I_R$ be the locations of the Dirac masses in $\div\j_R$, we have $\div \j_R = 2\pi \( \sum_{p\in \Lambda_R} \delta_p - \delta_\mr\)$, where $\Lambda_R = \Lambda + R\mz$. Moreover
$$W(\j_R)  = \frac{W(\j_R,\indic_{ I_R\times \mr})}{|I_R|}\le W(\j_0)+C\ep.$$

 There remains to make $\j_R $ a gradient. Following the proof of Corollary 4.4 of \cite{ss} we let $\tilde{\j}_R = \j_R + \np f_R$ in ${I}_R\times \mr$ where $f_R$ solves  $-\Delta f_R= \curl \j_R$ in ${I}_R\times \mr$ and $f_R=0$ on $\p ({I}_R\times \mr) $.   Then, $\div \tilde{\j}_R= \div \j_R $ and $\curl \tilde{\j}_R= 0$ in $\bar{I}_R$. We can thus find $H_R$ such that $\tilde{\j}_R= \nab  H_R$ in ${I}_R\times \mr$. It also satisfies $\nab H_R \cdot \vec{\nu}=\tilde{\j}_R \cdot \vec{\nu} = \j_R \cdot\vec{ \nu} + \np f_R \cdot \vec{\nu}=0  $ on $\p ({I}_R\times\mr) $. We may then extend $H_R$ to a periodic function by even reflection, and take the final $\bar{\j}_R$ to be $\nab H_R$. This procedure can only decrease the energy (arguing again as in \cite{ss, ma2d}): we have $W(\tilde{\j}_R, \indic_{{I}_R\times \mr})\le W(\j_R, \indic_{{I}_R\times \mr} )$ since
\begin{multline*} \int_{({I}_R \times \mr) \backslash \cup B(p, \eta)  } |\nab H_R - \np f_R|^2 - \int_{{I}_R\times \mr \backslash \cup B(p, \eta) }
 |\nab H_R|^2  \\ = - 2\int_{({I}_R\times \mr)  \backslash \cup B(p, \eta) }
 \nab H_R \cdot \np f_R + \int_{{I}_R\times \mr \backslash \cup B(p, \eta)}
 |\nab f_R|^2.\end{multline*}It can be checked that  the last two terms on the right-hand side converge as $\eta\to 0$ to the integrals over ${I}_R\times \mr $. Also integrating by parts, using the Jacobian structure and the boundary data, we have  $ \int_{{I}_R\times \mr}  \nab H_R \cdot \np f_R =0 $.
Therefore, letting  $\eta\to 0$  in the above yields
   $$ W(\j_R,\indic_{{I}_R \times \mr}) - W( \nab H_R ,\indic_{ {I}_R\times \mr})  \ge 0.$$
   We deduce that $W(\bar{\j}_R) \le   W(\j_R) \le  \min_{\mathcal{A}_1} W+C\ep$, with $\bar{\j}_R$ a $2R$-periodic (with respect to the variable $x$) test vector field belonging to $\mathcal{A}_1$.
 The result follows by a standard diagonal argument.

\end{proof}

The following proposition  could be proven as in \cite{ss}, however we omit the proof here.
\begin{pro}\label{pro1}
$W: L^p_{loc}(\mr^2, \mr^2) \to \mr \cup \{+\infty\} $, $1<p<2$,  is a Borel function.
$\inf_{\mathcal{A}_1} W$ is achieved and is finite.
\end{pro}

The result of Theorem \ref{thmini} will follow from  Proposition  \ref{periodisation}  combined with the following
\begin{pro}[Minimization in the periodic case]
Let $a_1, \cdots, a_N$ be any points in $[0,N]$ and $\j_{\{a_i\}}$ be the corresponding periodic vector field, as in Lemma \ref{casper}. Then
$$W( \j_{\{a_i\}}) \ge W(\j_{\mz} ) =-\pi \log 2\pi $$
where  $\j_{\mz} $ is the electric field  associated to the perfect lattice $\mz$.
\end{pro}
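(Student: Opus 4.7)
The plan is to use the explicit formula in Lemma~\ref{casper} to reduce the inequality to a Vandermonde-type bound on the unit circle, which follows from Hadamard's inequality --- the ``convexity argument'' in question being the concavity of $\log\det$ on Hermitian positive-definite matrices.

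By \eqref{WNlog}, the desired inequality $W(\j_{\{a_i\}}) \ge -\pi\log 2\pi$ is equivalent to
\[ \sum_{i\ne j}\log\left|2\sin\frac{\pi(a_i-a_j)}{N}\right| \le N\log N. \]
Introduce the unit-modulus complex numbers $z_j = \exp(2\pi \sqrt{-1}\, a_j/N)$, so that $|2\sin\frac{\pi(a_i-a_j)}{N}| = |z_i-z_j|$. The inequality then rewrites as $\prod_{i<j}|z_i-z_j|^2 \le N^N$. The key observation is that the left-hand side is exactly $|\det V|^2$, where $V=(z_j^{k-1})_{1\le k,j\le N}$ is a Vandermonde matrix with row index $k$ and column index $j$. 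Since $|z_j|=1$, every column of $V$ has squared Euclidean norm $\sum_{k=1}^N|z_j|^{2(k-1)}=N$, so Hadamard's inequality yields
\[ |\det V|^2 \le \prod_{j=1}^N N = N^N, \]
which is the desired bound.

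To verify $W(\j_\mz)=-\pi\log 2\pi$ and that equality is attained at the lattice, I would plug $a_j=j-1$ into \eqref{WNlog}: for each fixed $i$,
$\sum_{j\ne i}\log|2\sin\frac{\pi(i-j)}{N}|=\log\prod_{k=1}^{N-1}|2\sin(\pi k/N)|=\log N$, the last equality following from evaluating the factorization $(z^N-1)/(z-1)=\prod_{k=1}^{N-1}(z-\exp(2\pi \sqrt{-1}\, k/N))$ at $z=1$. Summing over $i$ yields $N\log N$ and hence $W(\j_\mz)=-\pi\log N-\pi\log(2\pi/N)=-\pi\log 2\pi$. Moreover, the equality case of Hadamard (pairwise orthogonality of the columns of $V$) forces $\overline{z_j}\,z_k$ to be a non-trivial $N$-th root of unity for each $j\ne k$, so $\{z_1,\dots,z_N\}$ must be a translate of the set of $N$-th roots of unity, i.e., $\{a_j\}$ is a translate of $\mz$ on $\T_N$. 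The proof is essentially a one-line application of Hadamard once the Vandermonde structure hidden in \eqref{WNlog} is recognized, so there is no serious obstacle --- the main step is simply to pass from the trigonometric sum to complex chords on the unit circle.
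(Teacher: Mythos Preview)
Your proof is correct and takes a genuinely different route from the paper's.

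The paper organizes the double sum in \eqref{WNlog} by the ``gap'' $p=j-i\pmod N$: writing $u_{p,i}=a_{i+p}-a_i$, one has $\sum_{i=1}^N u_{p,i}=pN$, and then Jensen's inequality for the strictly concave function $x\mapsto\log|2\sin x|$ on $(0,\pi)$ shows that each fixed-$p$ layer is maximized (hence $W$ minimized) when all $u_{p,i}$ are equal, which forces the lattice. Your argument bypasses this layering entirely: the identity $|2\sin\frac{\pi(a_i-a_j)}{N}|=|z_i-z_j|$ turns the whole sum into $\log|\det V|^2$ for the Vandermonde matrix on the unit circle, and Hadamard's inequality finishes in one stroke.

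What each approach buys: your Hadamard argument is shorter and sidesteps the bookkeeping of grouping by gaps (including the parity case $p=N/2$ when $N$ is even). The paper's Jensen argument, on the other hand, exposes finer structure---each gap layer is \emph{individually} optimized at the lattice, not just the total---and uses only elementary concavity rather than determinant inequalities. Both yield the same equality characterization: your orthogonality condition $\sum_m(\overline{z_j}z_k)^m=0$ forces the $z_j$ to form a coset of the $N$-th roots of unity, matching the paper's conclusion that all spacings $u_{p,i}$ must coincide.
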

\begin{proof} \ed{The proof relies on a convexity argument.}
First, $W(\j_{\mz})$ is immediately  computed via \eqref{WNlog}, taking $N=1$.

Let us now  consider  arbitrary points $a_1,\dots, a_N$ in $[0,N]$, and assume $a_1<\dots <a_N$.
 Let us also denote $u_{1,i}= a_{i+1}-a_i $, with the convention $a_{ N+1}=a_1+ N $.  We have $\sum_{i=1}^N u_{1,i}=   N$.
 Similarly, let
$u_{p,i}= a_{i+p}-a_{i}$, with the convention $a_{N+l}=a_l+N$.  We have $\sum_{i=1}^N u_{p,i}= p N$.
By periodicity of $\sin$, we may view the points $a_i$ as living on the circle $\mr/(N\mz)$. When adding the terms in $a_i-a_j$ in the sum of \eqref{WNlog}, we can split it according to the difference  $p=j-i$ but modulo $N$.  This way, there remains
\begin{equation}\label{3.12}
W(\j_{\{a_i\}}
)= -\frac{\pi}{N} \sum_{i\neq j}  \log \left|2\sin \frac{\pi (a_i-a_j)}{N}\right|-\pi \log \frac{2\pi}{N}=-
\frac{2\pi}{N}\sum_{p=1}^{[N/2]} \sum_{i=1}^N   \log \left| 2\sin\frac{ \pi u_{p,i}}{T} \right|-\pi \log \frac{2\pi}{N},\end{equation} where $[\cdot ]$ denotes the integer part.
But the function $\log |2\sin x|$ is stricly concave on $[0,\pi]$. It follows that
$$\frac{1}{N}\sum_{i=1}^N \log \left| 2\sin\frac{ \pi u_{p,i}}{N} \right|\le \log \left|2\sin \( \frac{\pi }{N^2} \sum_{i=1}^N  u_{p,i} \) \right|= \log \left|2\sin\frac{ p\pi}{N}\right|
$$ with equality if and only if all the $u_{p,i}$ are equal.
Inserting into \eqref{3.12} we obtain
\begin{equation}
\label{3.2}
W(\j_{\{a_i\}}) \ge  -2 \pi\sum_{p=1}^{[N/2]} \log \left|2\sin\frac{ p\pi}{N}\right|- \pi \log \frac{2\pi}{N}.
\end{equation}
On the other hand, if we take for the $a_i$'s the points of the lattice $ \mz$ viewing them as $N$-periodic, we have $u_{p,i}=  p$ for all $p, i$, so if we compute $W(\j_{\mz})$ using \eqref{3.12}, we find
$$W(\j_{\mz}) = - \frac{2\pi}{N} \sum_{p=1}^{[N/2]} \sum_{i=1}^N   \log \left| 2\sin\frac{
\pi p }{N} \right|
 - \pi \log \frac{2\pi}{N}=- 2 \pi \sum_{p=1}^{[N/2]}    \log \left| 2\sin\frac{ \pi p }{N} \right|
 - \pi \log \frac{2\pi}{N}  .$$ This is the right-hand side of \eqref{3.2}, so \eqref{3.2} proves that
 $W(\j_{\{a_i\}}) \ge W(\j_{\mz})$ with equality if and only if all the $u_{p,i}$ are equal, which one can easily check implies that $\{a_i\}=\mz$.
 \end{proof}

Combining with Proposition~\ref{periodisation}, this proves Theorem \ref{thmini}.

\subsection{Proof of Theorems~\ref{valz} and \ref{th4}}

We may  cancel out all leading order terms and  rewrite the probability law \eqref{loi} as
\begin{equation}\label{loi2}
d\Q(x_1, \dots, x_n) =\frac{1}{ \K} e^{- \frac{n  \beta}{2} \F(\nu)} \, dx_1 \dots dx_n\end{equation}
where \begin{equation}\label{defK}
\K= \Z e^{ \frac{\beta}{2}( n^2 \I (\mo)- n \log n)      }.\end{equation}
A consequence of Theorem~\ref{thuppb} is, recalling  \eqref{defa} :

\begin{coro}[Lower bound part of Theorem \ref{valz}] \label{coro43} For any \ed{$\beta>0$ there exists $C_\beta>0$ such that $\lim_{\beta\to\infty} C_\beta =0$ and
\begin{equation}\label{lbk}
\liminf_{n\to +\infty} \frac{\log \K}n \ge - \frac{\beta}{2}  (\min \tW +C_\beta)  .\end{equation}}
\end{coro}

\begin{proof}
It is exactly the same as in \cite[Corollary 4.7]{ma2d} but just letting
\begin{equation}\label{sigma} \sigma_{m} \j (y) :=  m\,\j( m y).\end{equation}
\end{proof}

For the upper bound part of Theorem~\ref{valz},  we start with the following lemma, which  has the same proof as in \cite[Lemma 3.5]{ma2d}.
\begin{lem}\label{lemintxi}
Letting $\nu_n$ stand for $\sum_{i=1}^n \delta_{x_i}$ we have, for any constant $\alpha>0$ and uniformly w.r.t. $\beta\ge \beta_0>0$,
 \begin{equation}\label{lex}
\lim_{n \to \infty} \( \int_{\mr^n} e^{-\alpha\beta n \int \zeta \,d\nu_n }  \, dx_1 \dots dx_n \)^\frac{1}{n} = |\E|.\end{equation}\end{lem}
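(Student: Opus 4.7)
The key observation is that the exponent separates: $\int \zeta\, d\nu_n = \sum_{i=1}^n \zeta(x_i)$, so by Fubini
\[
\int_{\mr^n} e^{-\alpha\beta n\int\zeta\, d\nu_n}\, dx_1\cdots dx_n \;=\; A_n^n,
\qquad A_n:=\int_\mr e^{-\alpha\beta n \zeta(x)}\, dx.
\]
Taking the $n$-th root reduces the claim to showing $A_n\to |\E|$ as $n\to\infty$, uniformly for $\beta\ge\beta_0$.

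\textbf{Lower bound.} Since $\zeta\ge 0$ everywhere and $\zeta\equiv 0$ on $\E$, the integrand equals $1$ on $\E$, whence $A_n\ge |\E|$ for every $n$.

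\textbf{Upper bound.} Fix $\delta>0$ and split
\[
A_n = \int_{\{\zeta\le\delta\}} e^{-\alpha\beta n\zeta}\, dx + \int_{\{\zeta>\delta\}} e^{-\alpha\beta n\zeta}\, dx \;=:\; I_1+I_2.
\]
Clearly $I_1\le|\{\zeta\le\delta\}|$. Choose $n_0$ with $\alpha\beta_0 n_0\ge 2\beta_1$; then for $n\ge n_0$ and $\beta\ge\beta_0$, the bound $e^{-\alpha\beta n\zeta(x)}\le e^{-(\alpha\beta n-\beta_1)\delta}\, e^{-\beta_1\zeta(x)}$ on $\{\zeta>\delta\}$ gives
\[
I_2\le e^{-(\alpha\beta n-\beta_1)\delta}\int_\mr e^{-\beta_1\zeta(x)}\, dx.
\]
The remaining integral is finite: $\zeta$ is lower semicontinuous and bounded from below on any compact set, while at infinity the compact support and unit mass of $\mu_0$ give $U^{\mu_0}(x)+\log|x|\to 0$, so $\zeta(x) = V(x)/2-\log|x|-c+o(1)$; assumption \eqref{assumpVsupp} then provides integrability of $e^{-\beta_1\zeta}$ at infinity. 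Therefore $I_2\to 0$ as $n\to\infty$, uniformly for $\beta\ge\beta_0$.

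\textbf{Passage to the limit.} We obtain $\limsup_n A_n\le|\{\zeta\le\delta\}|$. The sets $\{\zeta\le\delta\}$ decrease as $\delta\downarrow 0$ to $\{\zeta=0\}$ and are all bounded (since $\zeta\to\infty$ at infinity), hence of finite measure, so $|\{\zeta\le\delta\}|\to|\{\zeta=0\}|$. Under assumptions \eqref{assumpV1}--\eqref{assumpV4}, standard equilibrium-measure theory yields $\zeta>0$ a.e.\ on $\mr\setminus\E$ (outside $\E$ the potential $U^{\mu_0}$ is real-analytic so $\{\zeta=0\}\cap\E^c$ has Lebesgue measure zero), giving $|\{\zeta=0\}|=|\E|$. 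Thus $\limsup_n A_n\le|\E|$, which together with the lower bound yields $A_n\to|\E|$.

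\textbf{Anticipated obstacle.} The only delicate point is the identification $|\{\zeta=0\}|=|\E|$: preventing $\zeta$ from vanishing on a positive-measure set outside $\E$. This is automatic under the regularity imposed on $m_0$ but merits brief verification. Uniformity in $\beta\ge\beta_0$ is built in, as all error estimates depend on $\beta$ only through the threshold $\alpha\beta n\ge 2\beta_1$, which is met for $n\ge n_0(\beta_0)$.
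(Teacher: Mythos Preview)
Your approach is correct and is the natural one: the paper itself does not prove this lemma but cites \cite{ma2d}, Lemma~3.5, and the argument there is precisely the factorisation via Fubini into $A_n^n$ followed by a dominated-convergence computation of $A_n\to|\E|$, using \eqref{assumpVsupp} for the tail.

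One point deserves a sharper justification. Your argument that $|\{\zeta=0\}|=|\E|$ appeals to real-analyticity of $U^{\mu_0}$ on $\E^c$, but $\zeta=U^{\mu_0}+V/2-c$ and the paper only assumes $V$ lower semicontinuous in \eqref{assumpV1}, so $\zeta$ need not be analytic on $\E^c$ and your reasoning as written does not go through. In this literature one typically either imposes the extra hypothesis that $\zeta>0$ on $\mr\setminus\E$ (a standard ``regularity'' condition on the equilibrium problem, automatically satisfied when $V$ is real-analytic, which covers the random-matrix examples) or observes that the conclusion of the lemma is really $A_n\to|\{\zeta=0\}|$ and that $|\E|$ is the correct value under this implicit assumption. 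You should either state this as an additional hypothesis or note that the identity holds in the cases of interest; the rest of your proof is sound and uniform in $\beta\ge\beta_0$ exactly as claimed.
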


\edd{Then, exactly as in \cite{ma2d}, we integrate \eqref{loi2} (recall that $\nu$ stands for $\sum_i \delta_{x_i}$). We find,
$$ 1  =   \frac{1}{\K} \int_{\mc^n} e^{-\hal \beta n \F(\nu)} \, dx_1 \dots dx_n$$ hence
\begin{equation}\label{logq}
0 =- \frac{\log \K}{n} +\frac1n \log \int_{\mc^n} e^{-\hal \beta n \F(\nu)} \, dx_1 \dots dx_n.\end{equation}
We deduce, since $\widehat{F}_n(\nu) = \F(\nu) - 2\sum_i\zeta(x_i)$, that
\begin{equation}\label{logq2}
0  \le  - \frac{\log \K}{n}  + \frac{1}{n}    \log\( e^{-\hal \beta n \inf \widehat{F}_n} \int_{\mc^n} e^{-\beta n \sum_i\z(x_i) } \, dx_1 \dots dx_n\)  .\end{equation}
The result then follows as in \cite{ma2d} from the above lemma and the lower bound of Theorem~\ref{thlowb} which implies that $\liminf_n\inf \inf \widehat{F}_n\ge \min\tW.$

The proof of Theorem~\ref{th4} is   identical to \cite{ma2d} once Theorems~\ref{thlowb} and \ref{thuppb} are known, except for the replacement of the scaling $\sqrt n$ by $n$ and $|A_n|\ge n! (\pi \delta^2/n)^n$ by $|A_n|\ge n! (\pi \delta/n)^n$.
}
\subsection{Proof of Theorem~\ref{th3}}

The proof relies on the following proposition, whose proof is much shorter than  in \cite{ma2d}, due to the simpler nature of the one-dimensional geometry.
\begin{pro}\label{5.1} Let $\nu_n=\sum_{i=1}^n \delta_{x_i}$, and $g_{\nu_n}$ be as in Definition~\ref{defG}. For any $R>1$, for any $x_0\in \mr$, denoting $$D(x_0,R)=\nu_n \(B_{\frac R{n} }(x_0)\)-n\mo\(B_{\frac R{n}}(x_0)\)$$
 we have
 $$ \int_{B_{2R}(x_0')} \,d g_{\nu_n} \ge -C R + c D(x_0,R)^2 \min\(1, \frac{|D(x_0,R)|}{R}\),$$
where $c>0$ and $C$ depend only on $V$.
\footnote{The condition $R>1$ could be replaced by $R>R_0$ for any $R_0>0$ at the expense of a constant $c$ depending  on $R_0$.}\end{pro}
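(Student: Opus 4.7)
The plan is to convert the discrepancy $D := D(x_0,R)$ into an $L^2$ lower bound for $\j_{\nu_n}$ via the divergence theorem on concentric disks around $x_0'$, and then transfer this via Proposition~\ref{wspread} to a lower bound for $\int dg_{\nu_n}$.

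First I would apply Proposition~\ref{wspread} to $(\nu_n', \j_{\nu_n})$ with some fixed small $\rho \in (0,1/8)$, which produces a family $\mathcal{B}_\rho$ of disjoint closed balls centered on $\mr$ (each of radius $<1$, with sum of radii in any unit interval $\leq \rho$) and a density $g_{\nu_n}$ satisfying the pointwise bound $g_{\nu_n} \geq -C + \tfrac14|\j_{\nu_n}|^2 \mathbf{1}_{\mr^2 \setminus \mathcal{B}_\rho}$ in $\mr \times [-1,1]$, and $g_{\nu_n} = \tfrac12|\j_{\nu_n}|^2$ outside this strip. Since $\mathcal{B}_\rho \subset \mr\times[-1,1]$ and $|B_{2R}(x_0')\cap(\mr\times[-1,1])|\leq 8R$ for $R>1$, integration gives
\[
\int_{B_{2R}(x_0')} dg_{\nu_n} \;\geq\; -CR \;+\; \tfrac14 \int_{B_{2R}(x_0') \setminus \mathcal{B}_\rho} |\j_{\nu_n}|^2,
\]
reducing the problem to bounding the last integral below by $c D^2 \min(1, |D|/R)$ up to an additive $O(R)$ loss.

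For any $t>0$, the divergence theorem applied to $\div \j_{\nu_n} = 2\pi(\nu_n' - \bm'\dr)$ on $B_t(x_0')$, together with the change of variables $x_i' = nx_i$, yields
\[
\int_{\partial B_t(x_0')}\j_{\nu_n}\cdot\vec\nu\,d\sigma \;=\; 2\pi D(x_0,t),
\]
and Cauchy--Schwarz on the circle of length $2\pi t$ gives the pointwise bound $\int_{\partial B_t(x_0')} |\j_{\nu_n}|^2 \geq 2\pi D(x_0,t)^2/t$. Since $t\mapsto \nu_n'(B_t(x_0'))$ is non-decreasing and $\|\bm'\|_\infty \leq \wb$, the function $D(x_0,\cdot)$ decays at most linearly away from $t=R$: $D(x_0,t) \geq D - 2\wb(t-R)$ for $t\geq R$, with a symmetric statement for $t<R$. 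Thus, assuming $D>0$ (the case $D<0$ being symmetric), $|D(x_0,t)| \geq |D|/2$ on a one-sided interval $J \subset [R/2, 2R]$ adjacent to $R$ of length $h := \min(R/2, |D|/(4\wb))$.

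The main obstacle is to incorporate the exclusion of $\mathcal{B}_\rho$, since Cauchy--Schwarz only directly controls $\int_{\partial B_t}|\j|^2$ rather than its restriction to the complement of $\mathcal{B}_\rho$. For each ball $B(p,r_p) \in \mathcal{B}_\rho$, the set of $t$'s for which $\partial B_t(x_0')$ meets $B(p,r_p)$ is an interval of length $\leq 2r_p$; summing $2r_p$ over balls whose centers $p$ lie in the $\mr$-strip of width $\leq 2h + C$ relevant to $t\in J$, the radius-sum condition bounds the total length of bad $t$'s in $J$ by $C\rho(h+1)$. For $\rho$ chosen small enough universally and $h$ larger than an absolute constant, the good set $J_{\mathrm{good}}:= \{t\in J: \partial B_t(x_0') \cap \mathcal{B}_\rho = \varnothing\}$ has measure at least $h/2$. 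For such $t$ one has $\int_{\partial B_t}|\j_{\nu_n}|^2 = \int_{\partial B_t\setminus\mathcal{B}_\rho}|\j_{\nu_n}|^2$, so integrating over $J_{\mathrm{good}}$ yields
\[
\int_{B_{2R}(x_0')\setminus\mathcal{B}_\rho} |\j_{\nu_n}|^2 \;\geq\; \int_{J_{\mathrm{good}}} \frac{2\pi D(x_0,t)^2}{t}\,dt \;\gtrsim\; \frac{h D^2}{R} \;\gtrsim\; D^2 \min(1,|D|/R).
\]
The remaining regime ($|D|$ too small for the preceding control of the bad set) has $|D|$ bounded by a $V$-dependent constant, so $cD^2\min(1,|D|/R)$ is itself bounded by a constant and is absorbed by the $-CR$ term after enlarging $C$. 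The key subtlety is that the bad set from $\mathcal{B}_\rho$ scales with $h$, the length of the integration window, rather than with $R$, so it stays a small fraction of $h$ provided $\rho$ is chosen small; this is what allows the estimate to remain sharp in the cubic regime $|D|^3/R$.
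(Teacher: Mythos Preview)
Your approach is correct and is essentially the same as the paper's: both use the ``integration over circles'' method, applying the divergence theorem and Cauchy--Schwarz on $\partial B_t(x_0')$ for $t$ in an interval of length $\sim\min(R,|D|/\wb)$ adjacent to $R$, restricting to those $t$ for which the circle avoids $\mathcal{B}_\rho$, and then invoking the lower bound \eqref{lbg}. The paper phrases this via the parameter $\tau=\min(2,1+D/(2R\|\bm\|_\infty))$ so that the integration window is $[R,\tau R]$, but this is exactly your $J$ with $h=(\tau-1)R$. Your treatment of the bad set coming from $\mathcal{B}_\rho$ is in fact more careful than the paper's: you correctly note that the good set can only be guaranteed to have measure $\ge h/2$ once $h$ exceeds an absolute constant, and you dispose of the remaining regime (where $|D|$ is bounded by a $V$-dependent constant) by absorbing $cD^2\min(1,|D|/R)$ into the $-CR$ term. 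The paper simply asserts $|T|\ge\tfrac12(\tau-1)R$ from $\rho<1/2$, which glosses over this small-$h$ case.
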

\begin{proof}

 Two cases
can happen: either $D(x_0,R) \ge 0$ or $D(x_0, R) \le 0$.

We start with the first case.
Let us choose $\tau =\min \(2, 1+\frac{D(x_0,R)}{2R\|\bm\|_{L^\infty}}  \)$ and denote $T= \{r \in
[R, \tau R], B_r(x_0') \cap \mathcal{B}_\ro = \varnothing\}$, where $\mathcal{B}_\ro$ is as in Proposition \ref{wspread}. By
construction of $\mathcal{B}_\ro$ and since $\ro<\hal$,  we have $|T|\ge \hal (\tau-1) R$. We then
follow the method of ``integrating over circles" introduced in
\cite{gl7}: let $\mathcal{C} $ denote $\{x \in B_{\tau
R}(x_0')\backslash B_R(x_0'), |x-x_0'|\notin T\}$.

For any $r \in T$, since $\p B_r(x_0')$ does not
intersect $Supp (\nu_n')$, we have \begin{multline} \int_{\p
B_r(x_0')} \j_{\nu_n} \cdot \nu = \int_{B_r(x_0')} \div \j_{\nu_n}= 2\pi \nu_n'
(B_r(x_0')) -
\int_{B_r(x_0')}\bm\( \frac{x}{n}\) \dr \\
\ge  D(x_0,R) -2(\tau-1) R\|\bm\|_{L^\infty}
\ge \hal D(x_0,R)
\end{multline} by assumption and by choice of $\tau$.
 Moreover, for any $r \in T$, we have, by Cauchy-Schwarz,
$$\int_{\p B_r(x_0')}|\j_{\nu_n}|^2 \ge\frac{1}{2\pi r}
 \( \int_{\p B_r(x_0')} \j_{\nu_n} \cdot \vec{\nu}\)^2 \ge \frac{1}{8\pi r}
  D(x_0,R)^2.$$
Integrating over $T$, using $|T|\ge \hal (\tau-1) R$, we have
$$\int_T \frac{dr}{r} \ge \int_{\tau R - \hal (\tau-1) R}^{\tau R}\frac{dr}{r}= -\log \(1- \frac{\tau-1}{2\tau} \)$$
and thus
$$\int_{B_{\tau R} (x_0')\backslash \B_\rho} |\j_{\nu_n}|^2 \ge  c D(x_0,R)^2 \min \( 1,
\frac{D(x_0,R)}{R\|\bm\|_{L^\infty}} \),$$ for some $c>0$ depending only on $\|\bm\|_{L^\infty} $ hence on $V$.
Inserting
into \eqref{lbg},  we are led to
$$\int_{B_{2R}(x_0')}  g_{\nu_n} \ge - C (\|\bm\|_{L^\infty}+1)R +
  c D(x_0,R)^2 \min \( 1,
\frac{D(x_0,R)}{R\|\bm\|_{L^\infty}} \).
$$
The  case $D(x_0, R) \le 0$ is essentially analogous.

\end{proof}

\edd{We also need 
\begin{lem} For any $\nu_n=\sum_{i=1}^n \delta_{x_i}$, we have
\begin{equation}\label{fg}\widehat{\F}(\nu_n) = \frac{1}{n\pi} \int_{\mr^2} dg_{\nu_n}  \end{equation} where $\widehat{\F}$ is as in \eqref{fnhat}
and $g_{\nu_n}$ is the result of applying Proposition~\ref{wspread} to $\nu_n$.

\end{lem}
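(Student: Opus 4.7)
Unfolding the definitions of $\F$ and $\widehat\F$, one sees immediately that
$$\widehat{\F}(\nu_n) = \F(\nu_n) - 2\int \zeta\, d\nu_n = \frac{1}{n\pi}\, W(\nab H_n', \indic_{\mr^2}),$$
so the identity to prove reduces to
$$W(\nab H_n', \indic_{\mr^2}) = \int_{\mr^2} dg_{\nu_n}.$$
The plan is to apply the mass-displacement estimate \eqref{wg} of Proposition~\ref{wspread} to the finite configuration $(\nu_n',\j_{\nu_n})$, tested against the cutoffs $\chi_R$ of \eqref{defchi}, and then pass to the limit $R\to +\infty$.

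The key observation is that $\nu_n' = \sum_i \delta_{x_i'}$ consists of exactly $n$ points, all contained in the fixed compact set $\E'=n\E$, and that $\|\bm'\|_\infty \le \wb$ is bounded by \eqref{assumpV3}. Hence for every $R$ sufficiently large (depending on $n$), the support of $\nab\bar\chi_R$ sits at distance at least $\lambda$ from every $x_i'$, and the integer $N$ appearing on the right-hand side of \eqref{wg} vanishes. Proposition~\ref{wspread}(iii) then collapses to the \emph{exact} equality
$$W(\nab H_n', \chi_R) = \int \bar\chi_R\, dg_{\nu_n}.$$

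It remains only to pass to the limit $R\to +\infty$ on both sides, and the tool is decay at infinity of $\nab H_n'$: since the charges $\sum_i \delta_{x_i'}$ and $\bm'(\cdot/n)\dr$ are compactly supported with the same total mass $n$, the field $H_n'$ is harmonic outside a fixed ball and admits a dipole-or-higher multipole expansion there, yielding $|\nab H_n'(z)| = O(|z|^{-2})$, so $|\nab H_n'|^2 \in L^1$ at infinity. Because $\chi_R\equiv 1$ on every $x_i'$ for $R$ large, the definition \eqref{WR} gives
$$W(\nab H_n',\indic_{\mr^2}) - W(\nab H_n',\chi_R) = \tfrac12 \int_{\mr^2}(1 - \bar\chi_R)|\nab H_n'|^2,$$
which tends to $0$ as $R\to\infty$ by dominated convergence. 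On the right-hand side, Proposition~\ref{wspread}(ii) gives $g_{\nu_n} = \tfrac12|\nab H_n'|^2$ outside $\mr\times[-1,1]$ and the lower bound \eqref{lbg} makes the negative part of $g_{\nu_n}$ a finite measure supported in a compact region; hence $dg_{\nu_n}$ has finite total mass and $\int \bar\chi_R\, dg_{\nu_n}\to \int dg_{\nu_n}$ by a second dominated convergence. The main (and minor) obstacle is just to justify the $|z|^{-2}$ decay of $\nab H_n'$ precisely enough for both passages to the limit, which follows from the explicit Newtonian-potential representation of $H_n'$ and the charge-neutrality of its source.
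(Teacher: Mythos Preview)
Your argument is correct and follows the same route as the paper's proof: apply \eqref{wg} with the cutoffs $\chi_R$, observe that $N=0$ once $R$ is large, and let $R\to\infty$. You actually supply more detail than the paper does on the passage to the limit, via the $O(|z|^{-2})$ decay of $\nab H_n'$ coming from charge neutrality. One small inaccuracy: the lemma is stated for arbitrary $x_i\in\mr$, so the $x_i'$ need not lie in $\E'$; but this is harmless, since all you need is that the $n$ points form a finite (hence compactly supported) set, and the rest of your decay argument goes through unchanged.
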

\begin{proof} This follows from \eqref{wg} applied to $\chi_{R}$, where $\chi_{R}$ is as in \eqref{defchi}. If $R$ is large enough then $\#\{p\in\supp(\nu)\mid B(p,C)\cap\supp(\nabla\bar{\chi})\neq\varnothing\} = 0$ and therefore \eqref{wg} reads
$$W(\j_{\nu_n},\chi_{R}) = \int \bar{\chi}_{R}\,dg_{\nu_n}.$$
Letting $R\to +\infty$ yields $W(\j_{\nu_n},\indic_{\mr^2}) = \int\,dg_{\nu_n}$ and the result, in view of \eqref{defF}.
\end{proof}
}

We now proceed to the proof of Theorem~\ref{th3}, starting with \eqref{local}. If $R>R_0$ and $|D(x_0',R)|\ge \eta R$ then from Proposition \ref{5.1} and using the fact  ---  from Proposition \ref{wspread} --- that $g_{\nu_n}$  is positive outside $\cup_{i=1}^n B(x_i',C)$ and that $g_{\nu_n}\ge -C$ everywhere, we deduce from   \eqref{fg} and \eqref{defF}, \eqref{fnhat} that
\begin{equation}\label{pointf}\F(\nu_n) \ge  \frac{1}{n} \(- CR + c \min( \eta^2 ,  \eta^3) R^2 \)+ 2\int \zeta\,d\nu_n .\end{equation}
Inserting into \eqref{loi2} we find
$$\Q\( \left| D(x_0',R)\right|\ge \eta R\)
\le\frac{1}{\K}  \exp\(C \beta  R -c \beta\min(\eta^2, \eta^3)  R^2\) \int  e^{-n\beta\int\zeta\,d\nu_n} \, dx_1\dots dx_n.$$
Then, using   the lower bound \eqref{lbk} and Lemma~\ref{lemintxi} we deduce that if $\beta\ge\beta_0$ and $n$ is large enough depending on $\beta_0$ then
$$\log \Q\( \left|D(x_0',R)\right|\ge  \eta R\) \le  -c\beta\min(\eta^2,\eta^3) R^2+ C\beta R + Cn\beta+ Cn , $$
where $c, C>0$ depend only on $V$. Thus \eqref{local} is \smallskip established.

 We next turn to \eqref{controlez}.  Arguing as above, from \eqref{fg} we have $\F(\nu_n) \ge - C  + 2  \int\zeta\,d\nu_n.$
Splitting $2\int \zeta \, d\nu_n $ as $\int \zeta\, d\nu_n+\int \zeta\, d\nu_n$, inserting into \eqref{loi2} and using \eqref{lbk} we are led to
$$\Q\( \int\x \, d \nu_n  \ge \eta\) \le e^{ -\hal n \beta \eta + C n (\beta  +1)} \int e^{-n\beta \int \zeta\,d\nu_n} \, dx_1\dots, dx_n,$$
where $C$ depends only on $V$. Then, using  Lemma \ref{lemintxi} we deduce
\eqref{controlez}.

We finish with \eqref{contrsob}. Inserting the result of  Lemma \ref{lemcontrole} into \eqref{loi2}, we have, if $I$ is an interval of width $R/n$
$$\Q\( \|\nu_n - n\mu_0\|_{W^{-1,q}(I)} \ge C_q \eta \sqrt{n} (1+R^2/n^2)^{\frac{1}{q}-\frac{1}{2}}  \)\le
\frac{1}{\K}  e^{- \hal n \beta \eta } \int  e^{-n\beta\int\zeta\,d\nu} \, dx_1\dots dx_n.$$
Arguing as before and rearranging terms yields \eqref{contrsob}.

This concludes the proof of Theorem~\ref{th3}.

\edd{\section{Appendix}}
\subsection{Proof of Lemma \ref{depoint}}
Assume $\j$ and $\j'$ belong to $\mathcal A_m$ and  satisfy \eqref{eqj} with the same $\nu$. Then $f = \j - \j'$ is divergence-free and curl-free, hence can be seen, identifying $\mr^2$ and $\mc$, as an entire holomorphic function $\sum_{n=0}^\infty a_n z^n$. If we assume that $W(\j)$ and $W(\j')$ are finite, then it follows  from  \cite{STi},~Corollary~1.2 that  the growth of the $L^1$ norms of $\j$ and $\j'$ is no worse than $R^{3/2}\sqrt{\log R}$ hence there exists $C>0$ such that for any $R>2$ we have $\|f\|_{L^1(B_R)}\le C R^{3/2} \sqrt{\log R}$. But by Cauchy's formula we have, for any $R>0$ and $t\in [R, R+1]$
 $$a_n= \frac{1}{2i\pi} \int_{\partial B(0,t)} \frac{f(z)}{z^{n+1}} \, dz= \frac{1}{2i\pi} \int_{R}^{R+1} \int_{\p B(0,t)} \frac{f(z)}{z^{n+1}} \, dt.$$
 It follows with the above that $|a_n|\le CR^{3/2} \sqrt{\log R} R^{-n-1}$ which implies, letting $R \to \infty$ that $a_n=0$ for any $n \ge 1$, thus  $f$ is a constant. This constant must then be zero since both $\j$ and $\j'$ are square integrable on the infinite strips $[a,b]\times [1,+\infty]$.

We note that Lemma \ref{depoint} implies in particular
\begin{coro} \label{sym}Under the same assumptions, if $S(x,y) = (x,-y)$ then $\j\circ S = S\circ \j$. \end{coro}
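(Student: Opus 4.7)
The claim says that $\j$ is equivariant under the reflection $S(x,y)=(x,-y)$. I would apply the uniqueness Lemma \ref{depoint} to the reflected field
\begin{equation*}
\j^*(z):=S\bigl(\j(Sz)\bigr),\qquad z\in\mr^2.
\end{equation*}
Since $S^2=\mathrm{id}$, the identity $\j^*=\j$ is equivalent to $\j\circ S = S\circ\j$. So the strategy is simply: check that $\j^*$ is again admissible with the same $\nu$, check that $W(\j^*)<+\infty$, and invoke Lemma \ref{depoint}.

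The first thing to verify is $\j^*\in\mathcal{A}_m$ with the same $\nu$. Writing $\j^*(x,y)=(\j_1(x,-y),-\j_2(x,-y))$, a direct chain-rule calculation gives $\div \j^*(z) = (\div \j)(Sz)$ and $\curl \j^*(z) = -(\curl \j)(Sz)$. The line $\mr\subset\mr^2$ is fixed pointwise by $S$, so both $\nu$ (supported on $\Lambda\subset\mr$) and the background $\dr$ are $S$-invariant. Hence the right-hand sides are $2\pi(\nu-m\dr)$ and $0$ respectively, so $\j^*$ satisfies \eqref{eqj}--\eqref{eqnu} with the same $\nu$. The density bound \eqref{densbornee} depends only on $\nu$ and is unchanged.

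The second thing to verify is $W(\j^*)=W(\j)<+\infty$. The extended cutoffs satisfy $\bar\chi_R\circ S = \bar\chi_R$, since $\bar\chi_R$ depends only on the first coordinate, and the punctured domain $\mr^2\setminus\bigcup_{p\in\Lambda}B(p,\eta)$ appearing in \eqref{WR} is $S$-invariant because each $p\in\Lambda$ lies on the axis of symmetry of $S$. Combined with $|\j^*(z)|=|\j(Sz)|$, the change of variables $z\mapsto Sz$ gives $W(\j^*,\chi_R)=W(\j,\chi_R)$ for every admissible cutoff, and after dividing by $R$ and taking the $\limsup$ in \eqref{Wroi} one obtains $W(\j^*)=W(\j)$. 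Lemma \ref{depoint} then forces $\j^*=\j$, which is the claim. The argument is essentially bookkeeping; the only non-trivial input is the one-dimensional rigidity statement Lemma \ref{depoint}, and the only thing to be careful about is the sign in the curl computation (which is moot since $\curl \j = 0$).
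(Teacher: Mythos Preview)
Your proof is correct and follows exactly the same route as the paper: define the reflected field $\j' = S\circ\j\circ S$, verify it satisfies \eqref{eqj} with the same $\nu$ and has finite $W$, then invoke Lemma~\ref{depoint}. The paper's proof is a single sentence to this effect; your version simply fills in the routine verifications.
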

Indeed, it is easy to check that $\j' = S\circ \j\circ S$ satisfies \eqref{eqj} with the same $\nu$ as $\j$, and obviously $W(\j')<+\infty$, hence $\j'  =\j$.

% remarque: etant donne des points il existe un seul j a divergence nulle qui verifie la relation et $W(\j)<\infty$.
% En effet s'il y en a deux la difference est holomorphe, et par formule de Cauchy + energie finie, elle ne peut etre qu'affine puis constante
% Est-ce qu il existe une formule en fonction des points??

\subsection{Proof of the splitting formula (Lemma \ref{lemsplit})}
 Let $\nu_n = \sum_{i=1}^n \delta_{x_i}$.
First, letting $\triangle$ denote the diagonal of $\mr \times \mr$, we  may rewrite $\w$ as
$$\w(x_1, \dots, x_n) =\int_{\triangle^c}\,- \log |x-y| \, d\nu_n(x)\, d\nu_n(y) + n\int_\mr V(x)\, d\nu_n(x).$$
Splitting $\nu_n$ as $n \mo + \nu_n - n\mo$ and using the fact that $\mo\times \mu_0(\triangle) = 0$, we obtain
\begin{multline*}
w(x_1, \dots, x_n) = n^2 \I(\mo)+  2n \int U^{\mo}(x)\, d(\nu_n - n \mo)(x)+ n\int V(x)\, d(\nu_n - n \mo)(x)\\+
\int_{\triangle^c}\,- \log |x-y| \, d(\nu_n - n \mo)(x)\, d(\nu_n - n \mo)(y) .
\end{multline*}
Since $U^{\mo} + \frac{V}{2}= c + \z$ and since $\nu_n$ and $n \mo$ have same mass $n$, we have
$$2n \int U^{\mo}(x)\, d(\nu_n - n \mo)(x)+ n\int V(x)\, d(\nu_n- n \mo)(x)=  2 n \int \z \,d(\nu_n - \mo)= 2n \int \z \,d\nu_n,$$
using the fact that  $\z=0$ on  the support  of $\mo$.

 % First we note that  since $\nu $ and $n \mo$ have same mass and  compact support we have $H(x) = O(1/|x|)$ and $\nab H(x) = O(1/|x|^2)$ as $|x|\to +\infty$.
In addition, we have that  \begin{equation}\label{enren} \int_{(\mr\times \mr) \sm \triangle}\,- \log |x-y| \, d(\nu_n - n \mo)(x)\, d(\nu_n- n \mo)(y)= \frac{1}{\pi} W(\nab H_n ,\indic_{\mr^2}),\end{equation}
where  we define $H_n= - 2\pi \Delta^{-1}\(\sum_{i=1}^n \delta_{x_i} -n\mo\)$. Indeed, the integral might as well be written as over $\mr^2 \times \mr^2 \backslash \triangle$ with the diagonal in $\mr^2 \times \mr^2$; and then the identity is proven in \cite[Section 2]{ma2d}.   Combining  all the above we find
\begin{equation}\label{avantchvar}w(x_1, \dots, x_n) = n^2 \I(\mo)+  2n \int \z \,d\nu_n +\frac{1}{\pi} W(\nab H_n ,\indic_{\mr^2}).
\end{equation}
But, changing variables, we have
$$\hal\int_{\mr^2 \sm \cup_{i=1}^n B(x_i, \eta) } |\nab H_n|^2 =\hal \int_{\mr^2 \sm \cup_{i=1}^n B(x_i', n \eta)}  |\nab H_n'|^2,$$
and by  adding $\pi n\log\eta$ on both sides and letting $\eta\to 0$ we deduce that $W(\nab H_n , \indic_{\mr^2})= W(-\nab H_n', \indic_{\mr^2} ) -  \pi n \log n$. Together with \eqref{avantchvar} this proves the lemma.

%$\int_{ren}|\nab H|^2  $ denotes
%\begin{equation}
%\label{ren}
% \int_{ren}|\nab H|^2= \lim_{\eta\to 0}\int_{\mr^2\sm \cup_{i=1}^n B(a_i, \eta)} |\nab H|^2 + 2\pi n \log \eta.\end{equation}

\edd{
\subsection{Proof of the screening result Proposition~\ref{tronque}} Proposition~\ref{tronque} is the main hurdle in the analysis of the 1D log gas, and is specific to the one dimensional case. Our main task is to suitably truncate a field $\j$ on a vertical strip $I_R\times \mr$ so that it can be pasted to other fields, or repeated to yield a periodic field. The constraints are that we wish the modification to be localized near the boundary of the strip, and  the value of the  renormalized energy not to increase much. We also need this truncation procedure to be done for a (compact) set of fields all at once, with uniform estimates over this set. 
}

\subsubsection*{Some preliminary construction lemmas}
The following lemmas serve to estimate the energy of explicit vector fields on boxes, which will be later combined to \ed{build the  test vector fields in the transition strips we use for the screening}. 
 They are adaptations of \cite{ss} and rely on elliptic equations estimates.

\begin{lem}\label{lem48}
Let $\mathcal{K} $ be the  square $[-\frac{L}{2}, \frac{L}{2}]^2$. Let  $ \vp \in L^2 (\p \mathcal{K})$ and
 $a\in L^\infty([-\frac{L}{2}, \frac{L}{2}])$ be such that
  $\int_{-\frac{L}{2}}^{\frac{L}{2}} a(x) \, dx = - \int_{\p\mathcal{K}} \vp$.
  Then, $a_0$ being  the average of $a$ over $[-\frac{L}{2},\frac{L}{2}]$,
   the solution (well defined up to an additive constant) to   % the mean zero solution to
\begin{equation}\label{target1}
\left\{\begin{aligned}
- \Delta u &= a \delta_\mr &&\text{in} \ \mathcal{K}\\
\frac{\p u}{\p \nu} &= \vp &&\text{on} \ \p \mathcal{K}.
\end{aligned}
\right.
\end{equation}
satisfies for every $q \in [1, 4]$
$$\int_{\mathcal{K}} |\nab u|^q \le C_{q} \(a_0^q L^2+ L^2 \left\|a-a_0 \right\|^q_{L^\infty( [-\frac{L}{2}, \frac{L}{2}])  } + L^{2-\frac{q}{2}} \|\vp\|_{L^2(\p \mathcal{K} ) }^q  \).
$$
\end{lem}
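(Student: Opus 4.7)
The plan is to decompose $u$ into three pieces that each involve only one of the three right-hand-side ingredients, and to estimate each piece separately by rescaling the problem to the unit square $\mathcal{K}_1=[-1/2,1/2]^2$ and applying standard $L^q$ elliptic regularity. Writing $u=u_1+u_2+u_3$, I let $u_1$ solve $-\Delta u_1=a_0\delta_\mr$ in $\mathcal K$ with constant Neumann data $c_1:=-a_0/4$ on $\partial\mathcal K$ (compatible by construction), $u_2$ solve $-\Delta u_2=(a-a_0)\delta_\mr$ with zero Neumann data (compatible since $\int(a-a_0)=0$), and $u_3$ solve $-\Delta u_3=0$ with Neumann data $\varphi-c_1$ on $\partial\mathcal K$. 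The hypothesis $\int a=-\int_{\partial\mathcal K}\varphi$ together with $\int a=a_0L$ shows that $\int_{\partial\mathcal K}(\varphi-c_1)=\int\varphi+a_0L=0$, so $u_3$ is also well-posed.

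The rescaling $u(x,y)=L\,U(x/L,y/L)$ preserves $\nabla u$ pointwise and transforms the source/Neumann data in a way that leaves $\|a\|_\infty$, $\|a-a_0\|_\infty$, $|a_0|$ invariant while $\|\Phi\|_{L^2(\partial\mathcal K_1)}^2=L^{-1}\|\varphi\|_{L^2(\partial\mathcal K)}^2$. The change of variables for $\int|\nabla u|^q$ then produces an overall prefactor $L^2$, and multiplying by the appropriate power of $L$ coming from the boundary-norm scaling yields the factor $L^{2-q/2}$ on the third term. So the whole task reduces to three a priori estimates on the fixed unit square.

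For the piece $U_1$ I write $U_1(\xi,\eta)=-|\eta|/2+w(\xi,\eta)$; the correction $w$ is harmonic with bounded (piecewise constant) Neumann data of size $O(|a_0|)$, so $L^q$ regularity on the square gives $\|\nabla U_1\|_{L^q(\mathcal K_1)}\le C_q|a_0|$, hence $\int_{\mathcal K}|\nabla u_1|^q\le C_q|a_0|^qL^2$. For $U_2$ I use the symmetry $U_2(\xi,\eta)=U_2(\xi,-\eta)$ induced by the reflection symmetry of the data to reduce to a Neumann problem on the upper half-square $\mathcal K_1^+$ with harmonic right-hand side and Neumann datum $-(A-A_0)/2$ on $\eta=0$ and zero elsewhere; standard $L^q$ regularity for the Neumann problem on a rectangle with $L^\infty$ boundary datum on one side gives $\|\nabla U_2\|_{L^q}\le C_q\|A-A_0\|_\infty$, producing the second term. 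Finally $U_3$ is harmonic with $L^2$ Neumann datum $\Phi-c_1$, and duality with the trace embedding $W^{1,q'}(\mathcal K_1)\hookrightarrow L^2(\partial\mathcal K_1)$ — valid precisely when $q'\ge 4/3$, i.e.\ $q\le 4$ — yields $\|\nabla U_3\|_{L^q(\mathcal K_1)}\le C_q(\|\Phi\|_{L^2(\partial)}+|c_1|)$; summing the three contributions and restoring $L$-scalings gives the claimed bound.

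The main obstacle I expect is the $u_3$ piece: proving the $L^q$-gradient estimate for the Neumann problem from an only-$L^2$ boundary datum, and identifying exactly why the admissible range is $q\in[1,4]$. In two dimensions this comes down to the fact that $W^{1,q'}(\mathcal K_1)\hookrightarrow L^2(\partial\mathcal K_1)$ requires $q'\ge 4/3$, so the dualization of this trace — which is how one converts the $L^2$ boundary norm into a negative-Sobolev norm of the source for the variational formulation — forces the upper bound $q\le 4$. The remaining pieces ($u_1$ and $u_2$) are comparatively routine, since the sources are essentially one-dimensional and can be handled by an explicit potential plus a harmonic correction whose boundary data is bounded, making the elliptic regularity step straightforward on the fixed reference square.
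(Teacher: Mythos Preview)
Your proof is correct and follows essentially the same route as the paper: the same three-way decomposition $u=u_1+u_2+u_3$ separating the constant part $a_0$, the oscillation $a-a_0$, and the boundary datum $\varphi$, together with the same scaling to a fixed square and the same trace-duality argument (yielding the restriction $q\le 4$) for the harmonic piece $u_3$. The only cosmetic difference is that the paper chooses the Neumann data for $u_1$ to vanish on the vertical sides and equal a constant on the horizontal sides, which makes $u_1$ exactly $\tfrac{a_0}{2}|y|$ without any harmonic correction; your choice of a constant $c_1$ on all four sides is equally valid but requires the extra (routine) correction you describe.
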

\begin{proof}
We write the solution  $u$ of \eqref{target1} as $u=u_1+ u_2+u_3$ where
\begin{equation}\label{targetu1}
\left\{\begin{aligned}
-\Delta u_1 &= a_0 \delta_{\mr} && \text{in} \ \mathcal{K}\\
\frac{\p u_1}{\p \nu } &= \bar{\vp} && \text{on} \ \p \mathcal{K}
\end{aligned}
\right.
\end{equation}
where   $\bar{\vp}$  is equal to  $0$ on the vertical sides of the square and to $\frac{a_0}{2L}$ on both horizontal sides;
\begin{equation}\label{targetu2}
\left\{\begin{aligned}
-\Delta u_2&= (a-a_0)\delta_{\mr} && \text{in} \ \mathcal{K}\\
\frac{\p u_2}{\p \nu }&= 0 && \text{on} \ \p \mathcal{K}
\end{aligned}
\right.
\end{equation}
and
$$
\left\{\begin{aligned}
-\Delta u_3&= 0 && \text{in} \ \mathcal{K}\\
\frac{\p u_3}{\p \nu }&= \vp- \bar{\vp} && \text{on} \ \p \mathcal{K}.
\end{aligned}
\right.
$$
The  solution of \eqref{targetu1} is (up to a constant)
$u_1(x,y)= \frac{a_0}{2}|y|$. Hence
\begin{equation}\label{nrju1}
\int_{\mathcal{K}} |\nab u_1|^q= \(\frac{\bm }{2}\)^q L^2.\end{equation}
For $u_2$, we observe that  $\|(a-a_0)\delta_{\mr}\|_{  W^{-1,q}(\mathcal{K}) }$ is controlled,  for any  $q<\infty$, by $\|a-a_0\|_{L^\infty([-\frac{L}{2}, \frac{L}{2}])}$. Therefore, using elliptic regularity for \eqref{targetu2}, $\|\nab u_2\|_{L^q(\mathcal{K})}$ is controlled by  $\|a-a_0\|_{L^\infty([-\frac{L}{2}, \frac{L}{2}])}$ and a  scaling argument shows that for any $q<\infty$
\begin{equation}\label{nrju3}
\int_{\mathcal{K}}| \nab u_2|^q\le C_q L^2 \|a-a_0\|^q_{L^\infty([-\frac{L}{2}, \frac{L}{2}])}  .\end{equation}
Finally,  in the proof of  Lemma 4.16  of \cite{ss} it is shown that  for any $q\in[1,4]$
\begin{equation}\label{nrju2}
\int_{\mathcal{K}}| \nab u_3|^q\le C_{q} L^{2-\frac{q}{2}} \|\vp\|_{L^2(\p \mathcal{K})}^q.
\end{equation}
Combining \eqref{nrju1}, \eqref{nrju3} and \eqref{nrju2}, we obtain the result.

\end{proof}

\begin{lem}\label{lemrect2} Let $m$ be a positive constant and
let $\mathcal{K}$ be a square  of center $0$, and sidelength $1/m$. Then the solution to
$$
\left\{\begin{aligned}
-\Delta f&= 2\pi( \delta_0
  - m \delta_\mr) && \text{in} \ \mathcal{K}\\
\frac{\p f}{\p \nu }&= 0  && \text{on} \ \p \mathcal{K}
\end{aligned}
\right.
$$
satisfies
\begin{equation}\label{nrjf2}
\lim_{\eta \to 0}\left|\int_{\mathcal{K}\backslash B(0, \eta)}|\nab f|^2
+2 \pi \log \eta\right| = C - \pi\log m
\end{equation} where $C$ is universal,
and for every $1\le q<2$
\begin{equation}\label{nrjfp}
\int_{\mathcal{K}} |\nab f|^q \le C_q m^{q-2},\end{equation}
where $C_q$ depends only on $q$.
\end{lem}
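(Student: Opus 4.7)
The plan is to reduce everything to the unit square with $m=1$ by rescaling, and then to handle the unit case with standard elliptic regularity plus an explicit singularity expansion at the origin.

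First I would verify that the compatibility condition for the Neumann problem is satisfied: $\int_{\mathcal{K}}(2\pi\delta_0 - 2\pi m\delta_{\mr}) = 2\pi - 2\pi m\cdot (1/m) = 0$, which, together with the Neumann boundary condition, determines $f$ up to an additive constant. Then set $\tilde{f}(y) := f(y/m)$ on $\tilde{\mathcal{K}} = [-1/2,1/2]^2$. A direct change of variables in the weak formulation (using $\nabla_x f(x) = m(\nabla\tilde{f})(mx)$, $dx = m^{-2}\,dy$, $d\ell_\mr(x) = m^{-1}d\ell_\mr(y)$) shows that $\tilde{f}$ solves
\begin{equation*}
-\Delta\tilde{f} = 2\pi(\delta_0 - \delta_{\mr}) \text{ in } \tilde{\mathcal{K}},\qquad \partial_\nu\tilde{f} = 0 \text{ on } \partial\tilde{\mathcal{K}},
\end{equation*}
so it suffices to prove the analogous estimates for $\tilde{f}$ on the fixed unit square and then transfer them via the scaling identities
\begin{equation*}
\int_{\mathcal{K}}|\nabla f|^q\,dx = m^{q-2}\int_{\tilde{\mathcal{K}}}|\nabla\tilde{f}|^q\,dy,\qquad \int_{\mathcal{K}\setminus B(0,\eta)}|\nabla f|^2 = \int_{\tilde{\mathcal{K}}\setminus B(0,m\eta)}|\nabla\tilde{f}|^2.
\end{equation*}

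For the $L^q$ bound with $1\le q<2$, I would observe that the right-hand side $2\pi(\delta_0-\delta_{\mr})$ lies in $W^{-1,q}(\tilde{\mathcal{K}})$: the point mass is controlled via the Sobolev embedding $W^{1,q'}_0\hookrightarrow C^0$ (valid for $q'>2$, i.e. $q<2$), and $\delta_{\mr}$ restricted to $\tilde{\mathcal{K}}$ is continuously handled by the trace theorem $W^{1,q'}\to L^{q'}(\mr\cap\tilde{\mathcal{K}})$. Standard elliptic regularity for the Neumann problem on the Lipschitz square then gives $\|\nabla\tilde{f}\|_{L^q(\tilde{\mathcal{K}})}\le C_q$, and the scaling identity above yields \eqref{nrjfp}. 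For $q=1$ one can use the pointwise decomposition from the next step plus $\nabla\log|\cdot|\in L^1_{\text{loc}}$.

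For the renormalization \eqref{nrjf2}, I would isolate the singular behaviour at the origin: the contribution of $-2\pi\delta_{\mr}$ yields a function which is continuous across $\mr$ (the jump of the normal derivative is bounded), so near $0$ one has $\tilde{f}(y) = -\log|y| + h(y)$ with $h$ continuous in a neighbourhood of $0$ and $\nabla h\in L^2$ there. Expanding
\begin{equation*}
|\nabla\tilde{f}|^2 = \frac{1}{|y|^2} + 2\,\frac{y\cdot\nabla h}{|y|^2} + |\nabla h|^2
\end{equation*}
and integrating on $\tilde{\mathcal{K}}\setminus B(0,\eta')$, the cross term and $|\nabla h|^2$ are integrable up to $0$, while a direct polar computation gives $\int_{\tilde{\mathcal{K}}\setminus B(0,\eta')}|y|^{-2}\,dy = -2\pi\log\eta' + c_1$ for a universal $c_1$. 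Hence $\int_{\tilde{\mathcal{K}}\setminus B(0,\eta')}|\nabla\tilde{f}|^2 + 2\pi\log\eta'\to C_0$ as $\eta'\to 0$ for a universal $C_0$. Setting $\eta' = m\eta$ and using the scaling identity,
\begin{equation*}
\int_{\mathcal{K}\setminus B(0,\eta)}|\nabla f|^2 + 2\pi\log\eta = \Bigl(\int_{\tilde{\mathcal{K}}\setminus B(0,m\eta)}|\nabla\tilde{f}|^2 + 2\pi\log(m\eta)\Bigr) - 2\pi\log m,
\end{equation*}
which converges to $C_0 - 2\pi\log m$, yielding \eqref{nrjf2}.

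The only delicate point in this plan is the elliptic regularity step with a mixed singular right-hand side (a Dirac mass plus a line measure) on a domain with only Lipschitz boundary and under Neumann data; however, since both pieces of the source are controlled in $W^{-1,q}$ for $q<2$ by Sobolev embedding and the trace theorem respectively, the standard theory applies uniformly in $m$ after the rescaling, so no genuine obstacle remains.
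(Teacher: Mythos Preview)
Your approach is essentially the paper's own: reduce to $m=1$ by scaling, then write $f=-\log|z|+S$ with $S$ regular, and scale back. The paper's proof is two lines and simply asserts $S\in W^{1,\infty}(\mathcal K)$; you supply more detail by invoking $W^{-1,q}$ elliptic regularity for the $L^q$ bound and by expanding the energy near the origin.

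Two small remarks. First, the claim that the cross term $|y|^{-2}\,y\cdot\nabla h$ is integrable up to $0$ needs $\nabla h\in L^\infty$ near the origin, not merely $L^2$ (since $|y|^{-1}\notin L^2$ in two dimensions); this does hold, because $-\Delta h=-2\pi\,\delta_{\mathbb R}$ near $0$ has the explicit local solution $\pi|y|$ plus a harmonic correction, so $\nabla h$ is bounded there --- which is exactly the paper's $S\in W^{1,\infty}$ observation. Second, your scaling gives $C_0-2\pi\log m$, whereas the statement reads $C-\pi\log m$; your arithmetic is correct, and the discrepancy (together with the stray absolute value bars in \eqref{nrjf2}) appears to be a typo in the lemma as stated. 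The quantity actually used downstream is $W(\nabla f,\mathbf 1_{\mathcal K})=\tfrac12\bigl(C_0-2\pi\log m\bigr)$, which does carry the factor $-\pi\log m$.
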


\begin{proof} By scaling we can reduce to the case of $m=1$. Then,
it suffices to observe that $f(z)= - \log |z|+S(z)$ with $S\in W^{1,\infty}(\mathcal{K} )$ and scale back.
\end{proof}

We note that $W$  as defined in \eqref{WR} still makes sense for vector fields satisfying $\div \j= 2\pi (\sum \delta_{x_i}-m)$ \ed{which are not necessary gradients}, as  long as $ \j$ is a gradient  in  $\cup B(x_i, \eta_0) $ for some $\eta_0>0$. This is the notion we will use repeatedly below.

\begin{lem}
\label{lemsum}
Let $\j_1$ and $\j_2$ be two vector fields defined in a rectangle $\mathcal{R}$ of the plane which is symmetric with respect to the real axis, and   satisfying \begin{align}
& \div \j_1= 2\pi (\sum_i \delta_{x_i} -a_1 \delta_\mr)\quad \text{in} \ \mathcal{R}\\
& \div \j_2= a_2\delta_\mr \quad \text{in} \ \mathcal{R}\end{align}
 and $\curl \j_1$ and $\curl \j_2$ vanish near the $x_i$'s,
 for some distinct points $x_i \in \mr$ and some bounded functions on the real line, $a_1$ and $a_2$.
Then, for $q<2$ and $q'$ its conjuguate exponent, we have $\j_1\in L^q(\mathcal{R})$ and $\j_2\in L^{q'}(\mathcal{R})$ and
$$W(\j_1+\j_2, \indic_{\mathcal{R}})\le W(\j_1, \indic_\mathcal{R})   + \|\j_1\|_{L^q(\mathcal{R})}\|\j_2\|_{L^{q'}(\mathcal{R}) } +\hal\|\j_2\|_{L^2(\mathcal{R})}^2,$$ where $W$ is still defined as in \eqref{WR}.

\end{lem}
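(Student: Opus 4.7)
The plan is to expand the square $|\j_1+\j_2|^2 = |\j_1|^2 + 2\j_1\cdot\j_2 + |\j_2|^2$ inside the defining limit of $W(\j_1+\j_2,\indic_\mathcal{R})$ and bound the three resulting pieces separately. First I would establish the integrability claims. Near each $x_i$, the hypothesis $\curl\j_1 = 0$ locally combined with $\div\j_1 = 2\pi\delta_{x_i}$ modulo a bounded measure on $\mr$ yields the local decomposition $\j_1(z) = -\nab\log|z-x_i| + F(z)$ with $F$ smoother, so $\j_1\in L^q$ near $x_i$ for any $q<2$; combined with elementary estimates in the bounded rectangle this gives $\j_1\in L^q(\mathcal{R})$. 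For $\j_2$, the source $a_2\dr$ is an $L^\infty$ density on a one-dimensional subset of $\mr^2$, producing at worst a tangential jump across $\mr$ and no point singularities, so $\j_2$ is locally bounded, hence in $L^{q'}(\mathcal{R})$ for $q'>2$, and a fortiori in $L^2(\mathcal{R})$ by finiteness of $|\mathcal{R}|$.

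Next, I would observe that the point singularities of $\j_1+\j_2$ coincide with those of $\j_1$: since $\j_2$ contributes no Dirac mass at any $x_i$, the $\pi\log\eta$ counterterm in the definition of $W(\j_1+\j_2,\indic_\mathcal{R})$ is identical to the one appearing in $W(\j_1,\indic_\mathcal{R})$. Expanding the square under the limit therefore yields
\begin{equation*}
W(\j_1+\j_2,\indic_\mathcal{R}) = W(\j_1,\indic_\mathcal{R}) + \lim_{\eta\to 0}\int_{\mathcal{R}\sm\cup_i B(x_i,\eta)}\j_1\cdot\j_2\, + \lim_{\eta\to 0}\hal\int_{\mathcal{R}\sm\cup_i B(x_i,\eta)}|\j_2|^2,
\end{equation*}
and by dominated convergence the two remaining limits equal $\int_\mathcal{R}\j_1\cdot\j_2$ and $\hal\|\j_2\|_{L^2(\mathcal{R})}^2$ respectively, since both $\j_1\cdot\j_2$ and $|\j_2|^2$ lie in $L^1(\mathcal{R})$. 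An application of Hölder's inequality with exponents $(q,q')$ then bounds the cross term by $\|\j_1\|_{L^q(\mathcal{R})}\|\j_2\|_{L^{q'}(\mathcal{R})}$, giving the claimed inequality.

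The main (mild) conceptual point is verifying that the cross term produces no additional logarithmic divergence at the $x_i$'s: this is automatic because $\j_2$ is bounded near each $x_i$ while $\j_1\in L^q_\loc$, so $\j_1\cdot\j_2$ is $L^1$ near each $x_i$ and the limit as $\eta\to 0$ is finite without any log counterterm. Once this is noted, the proof reduces to the two lines above, so I do not anticipate any serious technical difficulty beyond the bookkeeping of the singular set.
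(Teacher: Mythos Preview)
Your proposal is correct and follows essentially the same route as the paper: expand $|\j_1+\j_2|^2$, observe that the $\pi\log\eta$ counterterm is unchanged since $\j_2$ contributes no Dirac masses, and bound the cross term by H\"older. The paper's proof is just the two-line version of what you wrote; your additional remarks on integrability and on why the cross term carries no logarithmic divergence are reasonable elaborations of what the paper leaves implicit.
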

\begin{proof}We have
$$\int_{\mathcal{R} \backslash \cup_i B(x_i, \eta)}  |\j_1+\j_2|^2 = \int_{\mathcal{R}\backslash \cup_i B(x_i, \eta)}
|\j_1|^2 + |\j_2|^2 + 2\j_1 \cdot \j_2.$$By H\"older's inequality we have \begin{equation*}
\left| \int_{\mathcal{R} \backslash \cup_i B(x_i, \eta)} \j_1 \cdot \j_2 \right| \le \|\j_1\|_{L^q(\mathcal{R}) } \|\j_2\|_{L^{q'}(\mathcal{R})}\end{equation*} The result easily follows.

\end{proof}

\subsubsection*{Proof of Proposition \ref{tronque}}
\ed{We start from a given electric field $E$ in $\mr$ and restrict it to the strip $[-R/2,R/2] \times \mr$. The steps of the screening then go as follows:
\begin{itemize}
\item as a preliminary, we show that with the assumptions placed on the electric field, it  decays fast enough away from the real axis.
\item By a mean value argument, we find a good substrip $[-t, t]\times \mr$ on the 
boundary of which the $L^2$ norm of $E$ is not too large.  This is possible because the energy $W(E)$ which we control is ``morally" equivalent (via the use of the mass displaced density $g$)  to a control on $\dashint_{[-R/2, R/2]\times \mr} |E|^2$. We also want the strip to be only slightly narrower, i.e. $R-t$ small with respect to $R$.
\item We keep the vector field unchanged in $[-t,t] \times [-y_R, y_R]$ unchanged and define a new vector field $E_R$ in the transition strip $(I_R\backslash I_{2t}) \times \mr$, as well as in the parts far from the real axis: $[-R/2,R/2]\times ((-\infty,-y_R]\cup [y_R, +\infty))$.
The new vector field has to satisfy a relation of the form \eqref{eqj} but not necessarily be a gradient, and it has to have the same normal component as $E$ on the boundary of $[-t,t]\times \mr$ so as not to create any  new divergence there. 
This new vector field is constructed by splitting the region in which it needs to be defined into suitable rectangles and semi-infinite strips (cf. Fig \ref{fig}), and constructing it separately in each piece while keeping again the normal components on each interface continuous (so as again not to any create divergence). The construction in each piece is done 
thanks to the preliminary Lemmas \ref{lem48} and \ref{lemrect2} which provide at the same time the appropriate vector fields and estimates on their energy.
\item We check that $y_R$ can be chosen so that the energy of all the combined vector fields does not exceed the original energy in the strip plus a negligible error. 
\end{itemize}

}
First we note that, in view of \eqref{jcvun},
if we assume $G\subset \mathcal A_1$ satisfies the hypothesis of Proposition~\ref{tronque} and  $0<\ep <1$, then
there exists $y_0>0$ and $R_0>0$ such that for all $\j \in G$, we have
\begin{equation}\label{jcvun2}
\forall R>R_0, \qquad \int_{I_R\times \{ |y|>y_0\}} |\j|^2 <\ep^{10} R.\end{equation}
This motivates the following lemma \ed{in which we show an explicit decay of these vector fields away from the real axis}.
 \begin{lem}\label{harmoni}
Let $\j\in \mathcal A_1$  satisfy \eqref{jcvun2}, where $0<\ep<1$. Then, denoting $z=(x,y)$, if $|y|> \max(2y_0,R_0) $, we have
$$|\j|^2(z) \le C  \frac{\ep^{10} (|x|+|y|)}{|y|^2},$$ where $C$ is universal.
\end{lem}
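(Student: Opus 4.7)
The approach is a standard interior $L^2$ estimate for harmonic functions. The key observation is that, since $\nu = \sum_p \delta_p$ is supported on $\mr$, the relations $\div \j = 2\pi(\nu - \dr)$ and $\curl \j = 0$ imply that $\div \j = 0$ and $\curl \j = 0$ in each open half-plane $\{y>0\}$ and $\{y<0\}$. Differentiating these two first-order identities shows that each component of $\j$ is harmonic in $\mr^2 \sm \mr$. I will then apply the mean value inequality on a ball that is (a) contained in the region of harmonicity $\{|y'| > y_0\}$ so that the $L^2$ control \eqref{jcvun2} is meaningful, and (b) contained in a strip $I_R \times \mr$ with $R>R_0$ so that \eqref{jcvun2} actually applies.

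Concretely, I would fix $z = (x,y)$ with $|y| > \max(2y_0, R_0)$ and work with the ball $B(z,r)$ of radius $r = |y|/2$. Two geometric inclusions do the job. First, $B(z,r) \subset \{|y'| > y_0\}$, because any $(x',y') \in B(z,r)$ satisfies $|y'| \ge |y| - r = |y|/2 > y_0$; in particular $\j$ is harmonic on the whole of $B(z,r)$. Second, $B(z,r) \subset I_{2|x|+|y|} \times \mr$, because $|x' - x| \le r$ gives $|x'| \le |x| + |y|/2$. Since $|y| > R_0$, we have $R := 2|x| + |y| > R_0$, so \eqref{jcvun2} applies on $I_R \times \{|y'| > y_0\}$ and yields
\[
\int_{B(z,r)} |\j|^2 \le \int_{I_{2|x|+|y|} \times \{|y'|>y_0\}} |\j|^2 \le \ep^{10}\,(2|x|+|y|).
\]
The interior mean value bound $|\j(z)|^2 \le C r^{-2} \int_{B(z,r)} |\j|^2$ then gives
\[
|\j(z)|^2 \le \frac{4C}{|y|^2}\,\ep^{10}(2|x|+|y|) \le C' \frac{\ep^{10}(|x|+|y|)}{|y|^2},
\]
as claimed.

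The argument is elementary and I do not foresee any real obstacle. The only subtle point is verifying the two inclusions at once: one needs the ball $B(z,|y|/2)$ to sit simultaneously in the half-plane $\{|y'|>y_0\}$ (to use harmonicity) and in a horizontal strip of the form $I_R\times\mr$ of controlled width (to feed into the hypothesis). The choice $r = |y|/2$ is what balances these two requirements and produces the linear growth in $|x|+|y|$ on the right-hand side.
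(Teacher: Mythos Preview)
Your proof is correct and follows essentially the same approach as the paper: harmonicity of the components of $\j$ off the real axis, then the mean-value inequality on the ball $B(z,|y|/2)$, combined with the inclusion of this ball in a strip $I_R\times\{|y'|>y_0\}$ on which \eqref{jcvun2} applies. The only cosmetic difference is that the paper phrases the pointwise bound via the subharmonicity of $|\j|^2$ (sub-mean-value property) rather than the mean value on each harmonic component followed by Cauchy--Schwarz, and it uses the slightly looser strip $I_{2(|x|+|y|)}$ instead of your $I_{2|x|+|y|}$; both choices lead to the same conclusion with universal constants.
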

\begin{proof}
Each of the coordinates of $\j$ is harmonic in the half plane $\mr^2_+ = \{y> 0\}$ since $\div \j=\curl \j=0$ there. Therefore $|\j|^2 $ is sub-harmonic. Thus, if $B(z, |y|/2)\subset \mr^2_+$ then by the maximum principle we have
$$|\j|^2 (z)\le \dashint_{B(z, |y|/2)}  |\j|^2. $$
If $y>2y_0$, then $B(z, |y|/2)\subset [x- \frac{|y|}{2}, x+ \frac{|y|}{2}] \times [y_0,+\infty) \subset [-|x|-|y|, |x|+|y|] \times [y_0,+\infty).$ Thus in view of \eqref{jcvun2}, if $|x|+|y|> R_0/2$ we have
$$|\j|^2(z) \le \frac{8}\pi \ep^{10} \frac{|x|+|y|}{y^2},$$
and the result follows, by symmetry with respect to the $x$-axis.\end{proof}

\ed{The next result is about finding the ``good" boundary of a slightly narrower substrip  via a mean-value argument.}

\begin{lem}\label{bonbord}
Let $G$ satisfy the assumptions of Proposition \ref{tronque}. Then for any $\j \in G$, any $0<\ep<1/2$ and any $R$ large enough depending on $G$, $\ep$, we may find  $t \in [\frac{R}{2}- \ep R, \frac{R}{2}- \hal \ep R]$ such that
\begin{equation}\label{cotevert}
\int_{(\{-t\} \cup\{t\})\times \mr }|\j|^2 \le C \ep^{-1}\end{equation}
where $C$ depends only on $G$, and
\begin{equation}\label{438}
\lim_{R\to \infty} \frac{1}{2t} W(\j, \indic_{ I_{2t}\times \mr } ) = W(\j)\end{equation}
uniformly in $G$.
\end{lem}
\begin{proof}
Take $\j\in G$ and apply Proposition~\ref{wspread} to $\j$ for some fixed $0<\rho< 1/8$. We obtain a density $g$ and balls $\B_\rho$. Now, using \eqref{chirg} in Lemma \ref{wtog} together with  the bound \eqref{cvwunif}, we deduce that if $R$ is large enough depending on $G$ then for any $\j\in G$ and denoting $g$ the result of applying Proposition~\ref{wspread} to $\j$ we have
\begin{equation}\label{ixg}\int_{x=R/2-\ep R}^{R/2-\ep R/2}\int_\mr \(g(x,y)+g(-x, y)\)\,dx\,dy\le C R + \int_{I_R\times \mr} \,dg\le C R,\end{equation}
where $C$ depends only on $G$ and we have used the fact that $g\ge -C$ everywhere and $g\ge 0$ on the set $\{|y|>1\}$. Then by using the fact that the radii of the balls in $\B_\rho$ which intersect any given interval of length $1$ is bounded by $1/8$ we deduce that if $R$ is also large enough depending on $\ep$, the measure of the set $A$ of $x\in[R/2-\ep R,R/2-\ep R/2]$  such that $\{x,-x\}\times\mr$ does not intersect $\B_\rho$ is bounded below by $\ep R/4$. This and \eqref{ixg} implies that the set $T$ of  $t\in A$ such that $\int_\mr
\(g(t,y)+g(-t,y)\)\,dy < C/\ep$ has measure at least $\ep R/8$ if $C$ is chosen large enough depending on $G$, and   \eqref{lbg} and the fact that $g = \hal|\j|^2$ outside $\mr\times[-1,1]$ imply that \eqref{cotevert} holds  for $t\in T$. Thus
\begin{equation}\label{atleast} \left|\{t\in [R/2-\ep R,R/2-\ep R/2]\mid \text{\eqref{cotevert} holds}  \}\right| \ge \frac{\ep R}8.\end{equation}

For \eqref{438} we argue as in \cite{ss}, Lemma 4.14.
We let $\chi:[0,+\infty)\to\mr$ be a monotonic  function with compact support and let $\bar\chi(x,y) = \chi(|x|)$. First we note that  for any Radon measure $\mu$ in $\mr^2$  we have
$$\int \bar\chi \, d\mu= - \int_{t=0}^{+\infty} \chi'(t)  \mu( I_{2t}\times \mr)\, dt = - 2\int_{u=0}^{+\infty} \chi'(u/2)  \mu( I_{u}\times \mr)\, du .$$
This implies straightforwardly using the definition of $W(\j,\chi)$ that
$$2\int_{u=0}^{+\infty} \( W(\j, \indic_{I_{u} \times \mr} ) - g(I_{u} \times \mr) \) \, \chi'(u/2)\, dt= \int  \chi \, d g -  W(\j, \chi).$$
On the other hand, by \eqref{wg} and applying Lemma~\ref{wtog}, \eqref{contrnu}, if $\chi'$ is supported in $[x,y]\subset [R/2, R]$, then the right-hand side is bounded by  $ C(|x-y|+ R^{3/4} \log^2 R) \|\chi'\|_\infty$ for any $R$ large enough depending on $G$.  Given now any $\rho:\mr_+\to\mr$ supported in $[x,y]\subset [R/2, R]$ we may consider the positive and negative parts $\rho_+$ and $\rho_-$, and  and their primitives $\chi_+$ and $\chi_-$ with compact support, which are monotonic. Applying the above to $\chi_+$ and $\chi_-$ we find
$$\int_{u=0}^{+\infty} \( W(\j, \indic_{ I_{u} \times \mr} ) - g(I_{u} \times \mr) \) \, \rho(u)\, du\le  C(|x-y|+ R^{3/4} \log^2 R)\|\rho\|_\infty.$$

Since this is true for any  $\rho$  supported in $[R-2\ep R,R-\ep R]$, it follows by duality  that
\begin{equation}\label{bl1}\int_x^y |W(\j, \indic_{ I_{u} \times \mr} ) - g( I_{u} \times \mr) |\, du\le  C(|x-y|+ R^{3/4} \log^2 R).\end{equation}
Then we divide $[R-2\ep R, R-\ep R]$ into, say, $[\sqrt R]$ intervals $I_1,\dots, I_{[\sqrt R]}$ so that their length is equivalent to $\ep\sqrt R$ for large $R$. Then, for large enough $R$, on each such interval \eqref{bl1} implies that
$$\int_{I_k} |W(\j, \indic_{ I_{u} \times \mr} ) - g( I_{u}\times \mr) |\, du\le  C R^{3/4} \log^2 R.$$
Therefore the set of $u\in I_k$ such that $|W(\j, \indic_{ I_{u} \times \mr} ) - g( I_{u} \times \mr ) |\le 8 C R^{3/4} \log^2 R $ has measure at least $7|I_k|/8$. \edd{Since this is true on each $I_k$, and since $\cup_k I_k = [R-2\ep R, R-\ep R]$, letting $u=2t$}  the set of $t \in[R/2-\ep R,R/2-\ep R/2]$ such that $|W(\j, \indic_{ I_{2t} \times \mr} ) - g(I_{2t}\times \mr ) |\le 8 C R^{3/4} \log^2 R$ has measure at least $7\ep R/16$. Together with \eqref{atleast}, this implies the existence of $t\in[R/2-\ep R,R/2-\ep R/2]$ such that both  \eqref{cotevert} and \eqref{438} hold.

\end{proof}

We now prove Proposition~\ref{tronque}. Let $G$ satisfy its hypothesis and choose $0<\ep<1$, and $\j\in G$.  Applying Lemma~\ref{bonbord} we find that if $R$ is large enough depending on $G$, $\ep$, then there exists $t\in [R/2-\ep R,R/2-\ep R/2]$ such that \eqref{cotevert} and \eqref{438} hold. For any such {\em integer} $R\in\mn$ we may also
choose $y_R$  such that
\begin{equation}
\label{yr}
 \ep^{3} R < y_R < \ep^{5/2}  R .\end{equation}
Finally we choose $s>t $ such that $s-t \in [y_R, y_R +1] $ and $\frac{R}{2}-s \in \mn,$ and start constructing the vector field $\j_R$.
\medskip

%We also let $\a= \frac{1}{10}$.
\noindent
{\it -  Step 1: splitting the strip}.
We split the strip $I_R=[-\frac{R}{2}, \frac{R}{2}]\times \mr$ into several rectangles and strips (see figure below):
let
\begin{align*}
&D_0= [-t,t ]\times [-y_R, y_R]\\
& D_+= [t, s]\times [-y_R,y_R]\\
&D_-=[-s, -t]\times [- y_R, y_R]\\
& D_e^+ = [s,R/2] \times \mr\\
& D_e^-= [-R/2,-s]\times \mr\\
& D_1= [-s, s]\times\([y_R, y_R+R] \cup (-R- y_R, - y_R]\)
\\ & D_\infty=
 [-s,s]\times \( [R+ y_R, + \infty) \cup (-\infty,-R -y_R]\).\end{align*}

 \begin{figure}[h,c]
 \begin{center}
 \label{fig}
 \includegraphics[height=7cm]{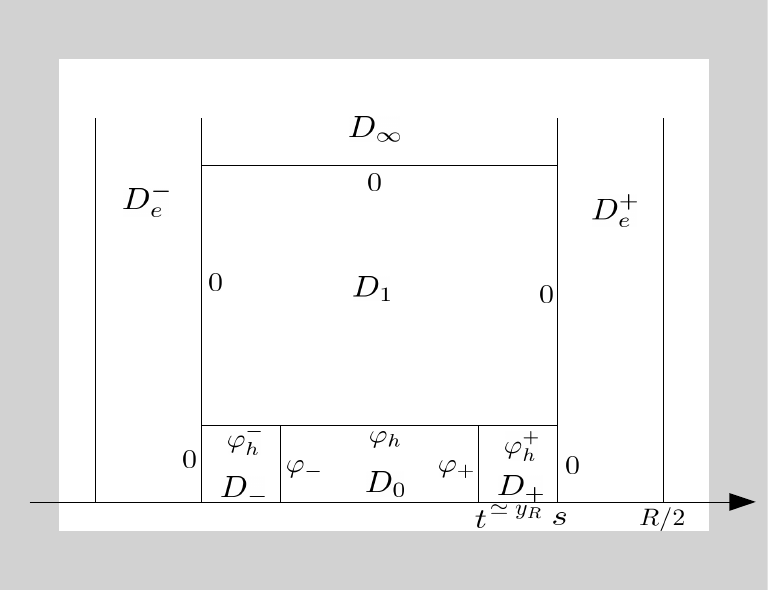}
 \end{center}
\edd{ \caption{The splitting of the strip $I_R\times\mr$ into subdomains $D_0$, $D_\pm$, $D_e^\pm$, $D_1$, $D_\infty$ and the boundary data next to each portion of the boundary between the regions.}
} \end{figure}

First we let $\j_R=\j$ in $D_0$, $\j_R=0$ in $D_\infty$  and below
we are going to define $\j_R$ on each of the other sets.

Recall that from Corollary~\ref{sym} we have that $\j(x,y)$ is the reflection of $\j(x,-y)$ with respect to the line $\{y=0\}$. We denote by  $\vp_+$  the trace $\j \cdot \vec{\nu}$  on the right-hand side of $D_0$ where $\vec{\nu}$ is the outward-pointing normal to $D_0$,  $\vp_-$ the same on the left-hand side, $\vp_h$ the trace on the upper side of $D_0$  (which by symmetry of the problem with respect to the real axis is equal
 to that on the lower side). From \eqref{cotevert}   and the  Cauchy-Schwarz inequality, we have
\begin{equation}\label{intg+}
 \int |\vp_-|^2 + \int |\vp_+|^2\le \frac C\ep  \qquad \int |\vp_-| + \int |\vp_+| \le C\sqrt\frac{y_R}\ep. \end{equation} and from Lemma \ref{harmoni}  and \eqref{yr} we have
 \begin{equation}\label{gh}
   \int_{[-t,t]\times\{y_R\}}   |\vp_h|^2\le \frac{C \ep^{10} R  (R+y_R) }  {y_R^2} < C\ep^{4}
       \qquad \int_{[-t,t]\times\{y_R\}} |\vp_h|\le C \ep^2 \sqrt{R}.
 \end{equation}
 %In addition using the fact that $\j$ is harmonic outside the real axis, we have
 %$$\int g_h+ \int_{ \{x_-\}\times[ y_R, + \infty)} \j\cdot \nu    +
  %\int_{\{ -\frac{R}{2}+t\} \times [R^{1+\a}, + \infty) } \j \cdot \nu =0$$ thus
 %from \eqref{cotevert1} we deduce
 %\begin{equation}\label{intgh}
 %\left|\int g_h\right|\le CR^{1-\gamma}.\end{equation}
 In addition,
 integrating the relation $\div \j = 2\pi (\sum_{p\in \Lambda} \delta_p-\delta_\mr)$ over $D_0$ gives
  \begin{equation}\label{intborr}
\frac{1}{2\pi} \( \int \vp_+ +\int \vp_-  + 2 \int \vp_h\)= \nu ([-t,t]) -2t.\end{equation}\medskip

\noindent
{\it - Step 2: defining $\j_R$ in $D_+$.} We first define $\vp_0 := \j_R\cdot \vec{\nu}$ on the boundary $\p D_+$, where $\vec{\nu}$ is the outward normal to $D_+$. For a certain constant $\vp_h^+$ to be chosen later we let
\begin{equation}\label{dnu}
\vp_0 = \begin{cases} -\vp_+ & \text{on $\p D_+\cap \p D_0$,}\\
                        0 & \text{on $\p D_+\cap\{x=s\}$,}\\
                    \vp_h^+& \text{on $\p D_+\cap \{y = \pm y_R\}$.}
        \end{cases}
\end{equation}
Then, inside $D_+$, we let $\j_R = \j_1+\j_2$, where
\begin{equation}\label{curlj1}
\left\{\begin{aligned}
\div \j_1 &= 2\pi \( \sum_{i=1}^{n_+} \delta_{x_i}- m_+\delta_{\mr}\) && \text{in} \ {D_+}\\
\j_1\cdot\nu &= 0 && \text{on} \ \p D_+.
\end{aligned}
\right.,
\end{equation}
\begin{equation}\label{curlj2}
\left\{\begin{aligned}
\div \j_2 &= 2\pi (m_+-1)\delta_{\mr} && \text{in} \ {D_+}\\
\j_2\cdot\nu &= \vp_0 && \text{on} \ \p D_+.
\end{aligned}
\right.
\end{equation}
Here, $n_+$ is an integer and $m_+$ a real number which are defined by
\begin{equation}\label{n+}n_+ = \left[(s-t) - \frac1{2\pi}\(\int\vp_+ + \int\vp_h\)\right],\quad m_+ = \frac{n_+}{s-t},\end{equation}
and for $1\le i\le n_+$ we have let
$$x_i = t + \frac{s-t}{n_+}\(i+\hal\).$$
Note that the above equations do not yield a uniquely defined $\j_1$ and $\j_2$. For \eqref{curlj1} to make sense we need $n_+\ge 0$ while for \eqref{curlj2} to have a solution we need to have
\begin{equation}\label{conditions}
2\pi \(n_+ - (s-t)\)  = \int \vp_0 = 2(s-t)\vp_h^+ -  \int \vp_+,
\end{equation}
which we take as the definition of $\vp_h^+$. The fact that $n_+\ge 0$ follows for $R$ large enough depending on $\ep$ from the fact that $s-t \ge \ep^3 R$ and \eqref{intg+}, \eqref{gh}.
\medskip

\noindent
{- \it Step 3: Estimating  the energy of $\j_R$ in $D_+$, $D_-$}. To compute the renormalized energy $W(\j_R, \indic_{D_+})$ we need to define $\j_1$ and $\j_2$ more precisely. For $\j_1$
let us consider $n_+$ identical squares $\{K_i\}_{i=1}^{n_+}$ with sidelength $\frac{s-t }{n_+}  = \frac{1}{m_+} $, sides parallel to the axes and such that $K_i$ is centered at $x_i$. We define $\j_1$ restricted to $K_i$ by applying Lemma~\ref{lemrect2} with $m = m_+$ and taking $\j_1=-\nab f$,  while outside $\cup_i K_i$ we let $\j_1 = 0$. Since from Lemma~\ref{lemrect2} we have ${\j_1}_{|K_i}\cdot\vec{\nu} = 0$ on $\p K_i$, it holds that $\div\j_1 = \sum_i \div {\j_1}_{|K_i}$ and therefore \eqref{curlj1} is satisfied by $\j_1$. On the other hand, still from Lemma~\ref{lemrect2} we obtain by summing the bounds \eqref{nrjf2} and \eqref{nrjfp} on the $n_+$ rectangles
\begin{equation}\label{nrjj1}
\lim_{\eta \to 0} \left|\int_{D_+ \backslash \cup_i B(x_i, \eta) } |\j_1|^2 + 2\pi \log \eta\right|\le n_+\(C - \pi\log m_+\),\end{equation}
and
\begin{equation}\label{nrjj1b}
\forall 1<q<2,\quad \int_{D_+}| \j_1|^q \le C_q  n_+. \end{equation}

We define  $\j_2$ by applying Lemma~\ref{lem48} in $D_+$,  hence with $L = s-t$, with the boundary data $\vp_0$ and  constant weight $m = 2\pi(m_+-1)$. From \eqref{dnu} and \eqref{conditions} the hypothesis $\int  m(x) \, dx = - \int_{\p D_+} \vp$ is satisfied and applying the lemma yields
\begin{equation}\label{j20}
\forall 2\le q<4,\quad \int_{D_+} |\j_2|^q \le C_{q} \( |m_+-1|^q (s-t)^2  + (s-t)^{2-\frac{q}{2}} \|\vp_0\|_{L^2 (\p D_+)}^q\).\end{equation}
Using Lemma~\ref{lemsum} we have, recalling that $\j_R :=\j_1+\j_2$ in $D_+$ and using \eqref{nrjj1}, \eqref{nrjj1b}, \eqref{j20}:
\begin{multline}\label{prew}
W(\j_R,\indic_{D_+}) \le
C n_+ +
C_q {n_+}^{1/q} \(|m_+-1| (s-t)^{2/q'} +(s-t)^{2/q'-1/2}\|\vp_0\|_{L^2 (\p D_+)}\) \\ +
C \( |m_+-1|^2 (s-t)^2  + (s-t) \|\vp_0\|_{L^2 (\p D_+)}^2\),
\end{multline}
for any $1<q<2$ such that the conjugate exponent $q'$ is less then $4$. Now, from \eqref{n+}, using \eqref{intg+}, \eqref{gh}, \eqref{yr} and the fact that $y_R\le s-t\le y_R+1$ we deduce  that
\begin{equation}\label{pren}|n_+- (s-t)|\le C\(\ep^2\sqrt R+ \sqrt\frac{y_R}\ep\), \quad |m_+ - 1|\le C\(\frac1{\ep\sqrt R} + \frac1{\sqrt{\ep y_R}}\),\end{equation}
and thus for $R$ large enough depending on $\ep$, since $s-t\simeq y_R$ for large $R$ and using \eqref{yr} again as well as $\ep<1$,
\begin{equation}\label{npcr}n_+\le C\ep^{5/2}R, \quad |m_+ - 1|\le \frac C{\ep^2\sqrt R}.\end{equation}
Moreover, from \eqref{n+}, \eqref{conditions} and \eqref{gh} we find
\begin{equation}\label{vphp}|\vp_h^+| \le \frac{1}{2(s-t)}\left|2\pi +\int\vp_h\right| \le  C\frac{\ep^2\sqrt R}{y_R}\le \frac C{\ep\sqrt R}.\end{equation}
Then, in view of \eqref{dnu}, \eqref{intg+},
$$\|\vp_0\|_{L^2 (\p D_+)}^2\le \frac C\ep.$$
Now we fix for instance $q = 3/2$, so that $q' = 3$ and combining the above with \eqref{prew}, \eqref{npcr} we find that for $R$ large enough depending on $\ep$, and denoting by $C_\ep$ a positive constant depending on $\ep$ but independent of $R$,
$$W(\j_R,\indic_{D_+}) \le C \ep^{5/2}R + C_{\ep} R^{2/3} \times R^{2/3-1/2} + C\ep^{3/2} R.$$
Thus for $R$ large enough depending on $\ep$ we find that
\begin{equation}\label{nrjjr}
W(\j_R, \indic_{D_+}) \le C\ep^{3/2}  R.\end{equation}
An almost symmetric computation yields the same bound for $W(\j_R, \indic_{D_-})$. It suffices to let
\begin{equation}\label{n-}n_- = 2(s-t) - \frac1{2\pi}\(\int\vp_++\int\vp_-+2\int\vp_h\) - n_+,\end{equation}
and carry on the proof with minuses instead of pluses. The fact that $n_-$ is an integer follows from the identity \eqref{intborr}, the fact that $2s$ is an integer and the fact that $\nu([-t,t])\in\mn$. Moreover the definition of $n_-$ implies that
$$  n_- = \left[(s-t) - \frac1{2\pi}\(\int\vp_- + \int\vp_h\)\right],\quad \text{or}\quad n_- = \left[(s-t) - \frac1{2\pi}\(\int\vp_- + \int\vp_h\)\right]+1,$$ hence $n_-$ is positive if $R$ is large enough and  \eqref{pren} holds for $n_-$ as well. The rest of the proof is unchanged. \medskip

\noindent
{- \it Step 4: defining  $\j_R$ over $D_1$.}  We need only consider the intersection of $D_1$ with the upper half-plane (and then extend by reflection).
We let $\vp_0$ be equal to $-\vp_h^-$, $-\vp_h$, $-\vp_h^+$, respectively, on  the intersection of  $\p D_1$ with $\p D_-$, $\p D_0$, $\p D_+$, respectively. On the remaining three sides of $\p D_1$ we let $\vp_0 = 0$. From \eqref{conditions} and its equivalent for $n_-$ and the fact that $n_\pm = (s-t)m_\pm$ we have
$$- \int \vp_0 = \pi(n_++n_- - 2(s-t)) +\hal\int\vp_++\hal\int\vp_-+\int\vp_h,$$
and then \eqref{n+}, \eqref{n-} imply that the integral of $\vp_0$ is zero.

Thus there exists a harmonic function $u$ in $D_1$ with normal derivative $\vp_0$ on $\p D_1$, we let $\j_R = \nab u$ on $D_1$. Using  \eqref{vphp} --- which holds for $\vp_h^-$ as well ---  and \eqref{gh} we have 
\edd{$$\int_{[-t,t]\times\{y_R\}} |\vp_h|^2 + \int_{[t,s]\times\{y_R\}} |\vp_h^+|^2 \le C\ep^4+ C \frac{\ep^4 R}{y_R},$$ }
hence
\begin{equation}\label{normeg}
\|\vp_0\|_{L^2(\p D_1) }^2 \le C\(\ep^4 + \ep^4 R/y_R\)\le C \ep.
\end{equation}
Then standard elliptic estimates yield as in Lemma~\ref{lem48} that
\begin{equation}\label{estjkh} \int_{D_1} |\j_R|^2 \le C R \|\vp_0\|_{L^2}^2\le CR\ep,\end{equation}
where we have concluded by \eqref{yr}.\medskip

\noindent
{\it  - Step 5: defining $\j_R$ over $D_e^+$}. The construction will be entirely parallel in $D_e^-$. We note that $D_e^+$ is an infinite strip of width $R/2- s$  and we have chosen $s$ so that this quantity is an integer. We can thus split this strip into exactly $R/2-s$ strips of width $1$. On each of these strips we define $\j_R$ to be equal to $0$ for $|y|\ge \frac{1}{2}$ and for $|y|\le \hal $ (i.e. in a square of sidelength $1$) we choose it to be $\nab f$ where $f$ is given by Lemma \ref{lemrect2} applied with $m=1$. Since $\j_R \cdot\vec{ \nu} = 0$ on the boundary of each of these squares, no divergence  is created at the interfaces, and the resulting $\j_R$ satisfies $\div \j_R = 2\pi (\sum_{p \in \Lambda }\delta_p - \delta_\mr)$. In addition in view of \eqref{nrjfp} the cost in energy is equal to a constant  times the number of strips, that is
\begin{equation}\label{nrjde}
W(\j_R, \indic_{D_e}) \le C |R/2-s|\le C \ep R.\end{equation}\medskip

\noindent
{\it  - Conclusion.} We have now defined $\j_R$ over the whole strip ${I}_R\times \mr $. It satisfies items ii) and iii). The main point is again that as long as $\j \cdot \vec{ \nu}$ is continuous across an interface it creates no singular divergence there.
Combining \eqref{nrjde} with \eqref{438}, \eqref{nrjjr} and \eqref{estjkh}, $\j_R$  also satisfies  \eqref{restronque}.
 This concludes the proof of Proposition \ref{tronque}.

\noindent
{\sc Etienne Sandier}\\
Universit\'e Paris-Est,\\
LAMA -- CNRS UMR 8050,\\
61, Avenue du G\'en\'eral de Gaulle, 94010 Cr\'eteil. France\\
\& Institut Universitaire de France\\
{\tt sandier@u-pec.fr}\\ \\
{\sc Sylvia Serfaty}\\
UPMC Univ  Paris 06, UMR 7598 Laboratoire Jacques-Louis Lions,\\
 Paris, F-75005 France ;\\
 CNRS, UMR 7598 LJLL, Paris, F-75005 France \\
 \&  Courant Institute, New York University\\
251 Mercer st, NY NY 10012, USA\\
{\tt serfaty@ann.jussieu.fr}


\begin{thebibliography}{99}



%\bibitem[A]{abri} A.  Abrikosov,
 % On the magnetic properties of superconductors of the second type.
  %{\em Soviet Phys. JETP} {\bf 5} (1957), 1174--1182.
\bibitem[ABF]{abf}
G. Akemann, J. Baik, P.  Di Francesco, {\it The Oxford Handbook of Random Matrix Theory},  Oxford University Press, 2011.
\bibitem[AGZ]{agz}G. W. Anderson, A. Guionnet, O. Zeitouni, {\it  An introduction to random matrices.} Cambridge University Press,  2010.
%\bibitem[AJ]{aj}
%A. Alastuey, B. Jancovici,
%On the classical two-dimensional one-component Coulomb plasma.
%{\it J. Physique} {\bf  42}  (1981), no. 1, 1--12.



%\bibitem[AHM]{ahm} Y. Ameur, H. Hedenmalm, N. Makarov,
%Fluctuations of eigenvalues of random normal matrices,
%{\verb+http://arxiv.org/PS_cache/arxiv/pdf/0807/0807.0375v4.pdf+}.
\bibitem[AM]{albmu} G. Alberti, S. M\"uller,
A new approach to variational problems with multiple scales. {\it
Comm. Pure Appl. Math.} {\bf  54}, no. 7 (2001), 761-825.
%\bibitem[AOC]{ao} Y. Ameur, J. Ortega-Cerd\`a, Beurling-Landau densities of weighted Fekete sets and correlation kernel estimates, {\tt http://arxiv.org/abs/1110.0284}.



\bibitem[APS]{pam}
S. Albeverio, L. Pastur,  M.  Shcherbina, On the 1/n expansion for some unitary invariant ensembles of random matrices, {\it Comm. Math. Phys.} {\bf 224} (1), (2001), 271--305.

\bibitem[AM]{am} M. Aizenman, P. Martin,
 Structure of Gibbs States
of one Dimensional Coulomb Systems, {\it Commun. Math. Phys.} {\bf  78}  (1980),  99-116.

\bibitem[ALS]{als} A. Avila, Y. Last, B. Simon, Bulk universality and clock spacing of zeros for ergodic Jacobi matrices with absolutely continuous spectrum. {\it Anal.  PDE }  {\bf 3} (2010), No. 1, 81--108.

%\bibitem[Ba]{bai} Z. D. Bai, Circular law. {\it Ann. Probab.} {\bf  25} (1997), no. 1, 494„1¤79.
\bibitem[Be]{becker}
 M. E. Becker, Multiparameter groups of
measure-preserving transformations: a simple proof of Wiener's
ergodic theorem.  {\it Ann Probab.} {\bf 9},  No 3  (1981), 504--509.
\bibitem[BG]{bg}  G. Ben Arous, A. Guionnet, Large deviations for Wigner's law and Voiculescu's non-commutative entropy,
{\it Probab. Theory Related Fields} {\bf 108} (1997), no. 4, 517--542.
%\bibitem[BZ]{bz} G. Ben Arous, O. Zeitouni,
%Large deviations from the circular law.  {\it ESAIM Probab. Statist. } {\bf 2} (1998), 123„1¤74.

%\bibitem[Be]{berman} R. J. Berman,
%Determinantal point processes and fermions on complex manifolds: large deviations and bosonization, {\tt http://arxiv.org/abs/0812.4224}.

%\bibitem[BBH]{bbh} F. Bethuel, H. Brezis, F. H\'elein, {\it
%Ginzburg-Landau Vortices}, Progress in Nonlinear  Partial
%Differential Equations and Their Applications, Birkh\"auser, 1994.

\bibitem[BFG]{bfg} F. Bekerman, A. Figalli, A. Guionnet,  Transport maps for $\beta$-matrix models and universality, {\tt arXiv:1311.2315.} (2013)


\bibitem[BSe]{bs} A. Borodin, S. Serfaty, Renormalized Energy Concentration in Random Matrices, {\it Comm. Math. Phys.} {\bf 320}, (2013), No. 1, 199--244.

\bibitem[BG1]{bg1}G. Borot, A. Guionnet, Asymptotic expansion of $\beta$ matrix models in the one-cut regime. {\it 
Comm. Math. Phys.} {\bf 317}, no 2,  (2013), 447-- 483. 
\bibitem[BG2]{bg2} G. Borot, A. Guionnet, Asymptotic expansion of $\beta$ matrix models in the multi-cut regime. {\it  Comm. Math. Phys.} {\bf  317} (2013), no. 2, 447–483.

%\bibitem[BSi]{bors} A.
%Borodin, C. D. Sinclair,
%The Ginibre ensemble of real random matrices and its scaling limits. {\it
%Comm. Math. Phys.} {\bf  291} (2009), no. 1, 177--224.

\bibitem[BEY1]{bey1} P. Bourgade, L. Erd\"os,  H.-T. Yau, Universality of general $\beta$-ensembles, {\it Duke Math. J.}, {\bf 163}, (2014), no. 6, 1127--1190.

\bibitem[BEY2]{bey2} P. Bourgade, L. Erd\"os, H. T. Yau, Bulk Universality of General $\beta$-ensembles with non-convex potential,  {\it J. Math. Phys.}  {\bf 53}  (2012), no. 9, 095221, 19 pp.

\bibitem[Br]{braides} A. Braides,
{\it  $\Gamma$-convergence for beginners}, Oxford University Press,  2002.


\bibitem[BL]{bl}
H. J. Brascamp, E. H. Lieb, In: {\it  Functional integration and its applications,}
Clarendon Press, 1975.

\bibitem[BDHS]{saffcircle} J. S. Brauchart, D. P. Hardin, E. B. Saff, 
Discrete Energy Asymptotics On A
Riemannian Circle, {\it Uniform Distribution Theory}, {\bf 7},  (2012), no.2, 77–108.


 %On extensions of the Brunn-Minkowski and Pr´ekopa-Leindler Theorems, Includ-
%ing Inequalities for Log Concave Functions, and with an Application to the Diffusion Equation, Journal of
%Functional Analysis 22 (1976), 366?389
%\bibitem[C]{caff} L. Caffarelli, The obstacle problem revisited.  {\it J. Fourier Anal. Appl.} {\bf   4 }  (1998),  no. 4-5, 383--402.
%\bibitem[DM]{dalmaso} Dal Maso,
%{\it An introduction to $\Gamma$-convergence},
%Progress in Nonlinear Differential Equations and their Applications, 8. Birkhäuser, 1993.

\bibitem[CRS]{crs} L. A. Caffarelli, J-M. Roquejoffre, Y. Sire, 
Variational problems for free boundaries for the fractional Laplacian. {\it J. Eur. Math. Soc.} {\bf 12} (2010), no. 5, 1151--1179.

\bibitem[D]{deift}
P. Deift, {\it Orthogonal Polynomials and Random Matrices: A Riemann-Hilbert Approach.} Courant Lecture Notes in Mathematics, AMS, 1999.
\bibitem[DG]{dg}
 P. Deift, D. Gioev,  {\it
Random matrix theory: invariant ensembles and universality.}
Courant Lecture Notes in Mathematics, AMS, 2009.

%\bibitem[DGIL]{dgil}
%P. Di Francesco, M.  Gaudin, C.  Itzykson, F. Lesage,
%Laughlin's wave functions, Coulomb gases and expansions of the discriminant. {\it
%Internat. J. Modern Phys. A}  {\bf 9} (1994), no. 24, 4257--4351.


\bibitem[DE]{de} I. Dumitriu, A. Edelman, Matrix models for beta ensembles, {\it J. Math. Phys.} {\bf 43} (2002), 5830--5847.

\bibitem[Dy]{dyson}F. Dyson, Statistical theory of the energy levels of a complex system. Part I, {\it  J. Math. Phys.} {\bf  3}, 140--156
(1962); Part II, ibid. 157--165; Part III, ibid. 166--175
%\bibitem[EL]{el} Edwards, S. F.; Lenard, A. Exact statistical mechanics of a one-dimensional system with Coulomb forces. II. The method of functional integration. J. Mathematical Phys. 3 1962 778?792.

\bibitem[EML]{ercoml} N. M.  Ercolani, K. D. McLaughlin,
Asymptotics of the partition function for random matrices via Riemann-Hilbert techniques and applications to graphical enumeration.
{\it Int. Math. Res. Not.} {\bf 14} (2003),  755--820.
%\bibitem[EPRSY]{ep}
%L. Erd\"os, S. P\'ech\'e, J. A.  Ramirez, B. Schlein, H.T.  Yau, Bulk universality for Wigner matrices, {\it  Comm. Pure Appl. Math.} {\bf 63} (2010),  895--925.

\bibitem[ESY]{esy} L. Erd\"os, B. Schlein, H.T. Yau,
Semicircle law on short scales and delocalization of eigenvectors for Wigner random matrices. {\it  Ann. Probab.} {\bf  37} (2009), no. 3, 815--852.
\bibitem[For]{forrester} P. J. Forrester, {\it Log-gases and random matrices}. London Mathematical Society Monographs Series, 34. Princeton University Press, 2010.
%\bibitem[Fre]{fr} J. Frehse, On the regularity of the solution of a second order variational inequality. {\it  Boll. Un. Mat. Ital. (4) } {\bf 6}  (1972), 312--315.

%\bibitem[Fri]{friedman}A.
%Friedman, {\it %
%Variational principles and free-boundary problems.}  John Wiley \& Sons, 1982.
\bibitem[Fro]{frostman} O. Frostman,  Potentiel d'\'equilibre et capacit\'e des ensembles avec quelques applications \`a
la th\'eorie des fonctions.
{\it Meddelanden Mat. Sem. Univ. Lund 3}, {\bf 115 s} (1935).
%\bibitem[G]{ginibre} J. Ginibre,
%Statistical ensembles of complex, quaternion, and real
   %           matrices. {\it J. Mathematical Phys.} {\bf 6} (1965), 440--449.

%\bibitem[Gi]{girko1} V. L. Girko,  Circle law. {\it Theory Probab. Appl.} {\bf 29} (1984), 694-706.


%\bibitem[Ja]{janco}
%B. Jancovici,
%Classical Coulomb systems: screening and correlations revisited.
%{\it J. Statist. Phys.} {\bf 80} (1995), no. 1-2, 445--459.
%\bibitem[JLM]{jlm}B. Jancovici, J. Lebowitz, G. Manificat,
%Large charge fluctuations in classical Coulomb systems. {\it J. Statist. Phys.} {\bf 72} (1993), no. 3-4, 773„1¤77.

\bibitem[GMS]{gms} A. Guionnet, E. Maurel-Segala,
Combinatorial aspects of matrix models, {\it
ALEA Lat. Am. J. Probab. Math. Stat.} {\bf 1} (2006), 241--279.

\bibitem[Jo]{joha} K. Johansson, On fluctuations of eigenvalues of random Hermitian matrices, {\it Duke Math. J.} {\bf 91} (1998), no. 1,
151--204.

%\bibitem[Ki]{kiessling}M. K.
%Kiessling,  Statistical mechanics of classical particles with logarithmic interactions. {\it Comm. Pure Appl. Math. } {\bf 46} (1993), no. 1, 27--56.

%\bibitem[KS]{ks}D. Kinderlehrer, G. Stampacchia, {\it An introduction to variational inequalities and their applications},  Classics in Applied Mathematics, SIAM, 2000.

%\bibitem[K]{krish} M. Krishnapur, Overcrowding estimates for zeroes of planar and hyperbolic Gaussian analytic functions. {\it J. Stat. Phys.} {\bf 124} (2006), no. 6, 1399--1423.
%\bibitem[La]{landau} H. J.  Landau,
%Necessary density conditions for sampling and interpolation of certain entire functions, {\it Acta Math.} {\bf 117}, (1967) 37-52.

\bibitem[Ko]{koe} W. K\"onig,
 Orthogonal polynomial ensembles in probability theory, {\it Probab. Surv.} {\bf  2} (2005), 385--447.
 \bibitem[Leb]{leble} T. Lebl\'e, A Uniqueness Result for Minimizers of the 1D Log-gas Renormalized Energy, preprint.
 
 \bibitem[LS]{lebles} T. Lebl\'e, S. Serfaty, in preparation.
\bibitem[Le1]{len1}
 A. Lenard,  Exact statistical mechanics of a one-dimensional system with Coulomb forces. {\it  J. Math.  Phys.}  {\bf 2}  (1961), 682--693.

\bibitem[Le2]{len2}
A. Lenard, Exact statistical mechanics of a one-dimensional system with Coulomb forces. III. Statistics of the electric field, {\it J. Math. Phys.}  {\bf 4} (1963), 533-543.

%\bibitem[LR]{lr} C. Lefter, V. Radulescu,
 % Minimization Problems and corresponding Energy
 % Renormalizations,  {\it Diff. Int. Eq.} {\bf 9}, 903-917.

%\bibitem[LN]{ln} E. H. Lieb, H.  Narnhofer, The thermodynamic limit for jellium.
%{\it J. Statist. Phys.} {\bf 12} (1975), 291--310.

%\bibitem[MP]{mp}
%Mar?enko, V A; Pastur, L A (1967). "Distribution of eigenvalues for some sets of random matrices". Mathematics of the USSR-Sbornik 1 (4): 457?483


\bibitem[Me]{mehta} M. L. Mehta, {\it Random matrices. Third edition.}  Elsevier/Academic Press, 2004.

%\bibitem[NSV]{nsv} F.  Nazarov, M.  Sodin, A.  Volberg, The Jancovici-Lebowitz-Manificat law for large fluctuations of random complex zeroes. {\it
%Comm. Math. Phys.} {\bf  284} (2008), no. 3, 833--865.

%\bibitem[Rid]{rider}  B. Rider,
%Deviations from the circular law,  {\it
%Probab. Theory Related Fields} {\bf 130} (2004), no. 3, 337--367.


%\bibitem[Ri]{nmriv} N. M. Rivi\`ere,
%Singular integrals and multiplier operators. {\it Ark. Mat.} { \bf  9} (1971),
%243--278.

\bibitem[MO]{so}
S. A. Molchanov, E.  Ostrovski, Symmetric stable processes as traces of degenerate diffusion processes, {\it Theory Probab. Appl.} {\bf 14} (1969), 128--131.


\bibitem[PS]{ps} M. Petrache, S. Serfaty, Next order asymptotics for minimal Riesz interaction energy, in preparation. 

\bibitem[RS]{rs}N. Rougerie, S. Serfaty, Higher Dimensional Coulomb Gases and Renormalized Energy Functionals, {\tt arXiv:1307.2805}, (2013).


\bibitem[SaTo]{safftotik} E. Saff, V. Totik, {\it
Logarithmic potentials with external fields}, Springer-Verlag, 1997.
\bibitem[SS1]{ss} E. Sandier, S. Serfaty, From the Ginzburg-Landau model to vortex lattice problems, {\it Comm. Math. Phys.} {\bf  313} (2012), no. 3, 635-743.
%\bibitem[SS2]{livre} E. Sandier, S. Serfaty, {\it Vortices in the Magnetic Ginzburg-Landau Model}, Birkh\"auser, 2007.

\bibitem[SS2]{ma2d} E. Sandier, S. Serfaty, 2D Coulomb  Gases and the Renormalized Energy, to appear in {\it Ann. Probab.} 
\bibitem[SS3]{gl7} E. Sandier, S. Serfaty,
 Global Minimizers for the  Ginzburg-Landau Functional below the First  critical Magnetic Field.  {\em Annales Inst. H. Poincar\'e, Anal. non lin\'eaire }{\bf  17} (2000), No 1, 119--145.

\bibitem[Shch1]{sh1} M. Shcherbina, Orthogonal and symplectic matrix models: universality and other properties, {\it Comm. Math. Phys.} {\bf 307},   (2011), no. 3, 761--790. 

\bibitem[Shch2]{sh2} M. Shcherbina, 
Fluctuations of linear eigenvalue statistics of $\beta$ matrix models in the multi-cut regime. {\it J. Stat. Phys.} {\bf 151}  (2013), no. 6, 1004--1034.


\bibitem[SeTi]{STi} S. Serfaty, I. Tice,
 Lorentz space estimates for the Coulombian renormalized energy,  {\em
Commun. Contemp. Math.} {\bf 14} (2012), no. 4, 1250027, 23 pp.

%\bibitem[SM]{sm}
%R. Sari, D.  Merlini,
%On the $\nu$-dimensional one-component classical plasma: the thermodynamic limit problem revisited. {\it
%J. Statist. Phys. } {\bf 14}  (1976), no. 2, 91--100.

\bibitem[Si]{simon} B. Simon,
The Christoffel-Darboux kernel, in ``Perspectives in PDE, Harmonic Analysis and Applications," a volume in honor of V.G. Maz'ya's 70th birthday, {\it Proc.  Symp.  Pure Math.}  {\bf 79} (2008), 295--335.
%\bibitem[TV]{tv}
%T. Tao, V. Vu,  Random matrices: universality of local eigenvalue statistics up to the edge, {\it Comm. Math. Phys.} {\bf 298} (2) (2010)  549--572.
%\bibitem[ST]{st} S. Serfaty, I. Tice, Lorentz Space Estimates for the Coulombian Renormalized Energy, to appear in {\it Comm. Contemp. Math.},
%	{\tt http://arxiv.org/abs/1105.3960}.
% \bibitem[TV]{tv}
%T.  Tao,   V. Vu,   Random matrices: universality of ESDs and the circular law.  {\it  Ann. Probab. } {\bf 38}  (2010),  no. 5, 2023--2065.

\bibitem[T]{tao} T. Tao, {\it Topics in random matrix theory,} Graduate Studies in Mathematics, vol. 132, AMS, Providence, RI, 2012.

\bibitem[VV]{vv}
B. Valk\'o, B. Vir\'ag,
Continuum limits of random matrices and the Brownian carousel. {\it Invent. Math.} {\bf 177}  (2009), no. 3, 463--508.

\bibitem[Wi]{wigner}
E. Wigner, Characteristic vectors of bordered matrices with infinite dimensions, {\it  Ann. Math.} {\bf 62} (1955), 548--564.
\end{thebibliography}
\end{document}